\documentclass[12pt]{article}
\usepackage[T1]{fontenc}
\usepackage[margin=0.8in]{geometry}
\usepackage{amsmath,amssymb,amsthm,mathtools,bm}
\usepackage{aliascnt}
\usepackage{booktabs,array}
\usepackage{float}
\usepackage[section]{algorithm}
\usepackage{algpseudocode}
\usepackage{enumitem}
\usepackage{graphicx}
\usepackage[numbers,sort&compress]{natbib}
\usepackage{microtype}
\usepackage{xcolor}
\usepackage{tikz}
\usetikzlibrary{arrows.meta,calc,patterns,positioning}
\usepackage[colorlinks=true,linkcolor=blue!55!black,citecolor=blue!55!black,urlcolor=blue!55!black]{hyperref}
\usepackage[nameinlink,capitalise,noabbrev]{cleveref}

\allowdisplaybreaks
\newtheorem{theorem}{Theorem}[section]
\newaliascnt{proposition}{theorem}
\newtheorem{proposition}[proposition]{Proposition}
\aliascntresetthe{proposition}
\newaliascnt{lemma}{theorem}
\newtheorem{lemma}[lemma]{Lemma}
\aliascntresetthe{lemma}
\newaliascnt{corollary}{theorem}

\aliascntresetthe{corollary}
\newaliascnt{remark}{theorem}

\aliascntresetthe{remark}
\newaliascnt{definition}{theorem}

\aliascntresetthe{definition}

\crefname{theorem}{theorem}{theorems}
\Crefname{theorem}{Theorem}{Theorems}
\crefname{proposition}{proposition}{propositions}
\Crefname{proposition}{Proposition}{Propositions}
\crefname{lemma}{lemma}{lemmas}
\Crefname{lemma}{Lemma}{Lemmas}
\crefname{corollary}{corollary}{corollaries}
\Crefname{corollary}{Corollary}{Corollaries}
\crefname{remark}{remark}{remarks}
\Crefname{remark}{Remark}{Remarks}
\crefname{definition}{definition}{definitions}
\Crefname{definition}{Definition}{Definitions}
\crefname{algorithm}{algorithm}{algorithms}
\Crefname{algorithm}{Algorithm}{Algorithms}

\newcommand{\R}{\mathbb R}

\newcommand{\E}{\mathbb E}
\newcommand{\Pp}{\mathbb P}
\newcommand{\ind}{\mathbf 1}

\newcommand{\cH}{\mathcal H}

\newcommand{\cQ}{\mathcal Q}
\newcommand{\op}{\mathrm{op}}
\newcommand{\clip}{\operatorname{clip}}
\newcommand{\Var}{\operatorname{Var}}

\definecolor{ed}{RGB}{225,0,100}

\title{\Large Improved Variance Estimation
in Homoskedastic Nonparametric Random-Design Regression via a Two-Scale Approach}

\author{
Edgar Dobriban, Rajarshi Mukherjee, James M. Robins, and  Zixiao Wang\footnote{
ED: Department of Statistics and Data Science, University of Pennsylvania.
RM and JMR: Department of Biostatistics, Harvard T.H. Chan School of Public Health.
JMR and ZW: Department of Epidemiology, Harvard T.H. Chan School of Public Health.
E-mail addresses: ED: \texttt{dobriban@wharton.upenn.edu};
RM: \texttt{ram521@mail.harvard.edu};
JMR: \texttt{robins@hsph.harvard.edu};
ZW: \texttt{zixiaowang@fas.harvard.edu}.}
}

\date{\today}

\begin{document}

\maketitle

\begin{abstract}
We study estimation of a constant conditional variance $\sigma^2$ in nonparametric regression with a $d$-dimensional random design. 
This is an important problem, and similar questions arise in causal inference. 
The regression function is $\beta_b$-H\"older smooth, the design density is $\beta_g$-H\"older smooth and bounded above and away from zero, and we consider the nonparametric regime $\beta_b>1$ and $d>4\beta_b$. Set
$\beta_g^\star=\beta_b(1-4\beta_b/d)/\{1+2\beta_b/d+8(\beta_b/d)^2\}$.
We give an estimator whose mean squared error is upper bounded by $Cn^{-4(\beta_b+1)/(d+4)}$
in the low-regularity regime 
when $0<\beta_g\leq\beta_g^\star$.
The low-regularity branch is based on a new two-scale construction: the covariate space is partitioned into cells, 
the local polynomial trend is projected out within each suitable cell, and the squared normalized contrast from one eligible close pair per cell is averaged across cells. 
In the high regularity regime 
when $\beta_g>\beta_g^\star$,
a higher-order influence function estimator of Robins, Li, Tchetgen Tchetgen, and van der Vaart (2008) provides the  rate $Cn^{-8\beta_b/(d+4\beta_b)}$. 
We also give an all-pairs ridge extension, which achieves the same two-scale rate, and evaluate the methods alongside a range of existing estimators in simulations.
\end{abstract}


\section{Introduction}

For an integer $d\geq1$, consider independent observations $(X_i,Y_i)$, $i=1,\ldots,n$, from the homoskedastic nonparametric regression model
\begin{equation}\label{eq:model-intro}
Y_i=f(X_i)+\varepsilon_i,
\qquad
\E(\varepsilon_i\mid X_i)=0,
\qquad
\E(\varepsilon_i^2\mid X_i)=\sigma^2.
\end{equation}
The main goal of this paper is to provide an estimator of $\sigma^2$ that improves on the close-pair benchmark without requiring a smooth design density. 
The covariate $X_i$ takes values in $[0,1]^d$. Its density $p$ is bounded above and away from zero and belongs to an isotropic H\"older ball of smoothness $\beta_g>0$.\footnote{The notation $\beta_b,\beta_g$ is borrowed from \cite{RobinsEtAl2008} for easing the comparison of results.}
The regression function $f$ belongs to a
H\"older ball of smoothness $\beta_b>1$. 
The conditional error law may depend on $X_i$, subject to the common conditional variance in \eqref{eq:model-intro} and a uniform fourth-moment bound.

This is an important and fundamental problem in itself, because the estimation of variance is required for forming prediction sets and prediction bands for the outcomes.
 Moreover this problem is also closely related to (and in fact a special case of) important problems in causal inference.
 Specifically, it is related to treatment effect estimation in observational studies \cite{RobinsEtAl2008}.

In a different setting, 
when the regression function is 
more rough than what we consider here, i.e.,
$0<\beta_b<1$ and $d>4\beta_b$, \citet{RobinsEtAl2008} constructed a close-pair estimator with mean squared error of order $n^{-8\beta_b/(d+4\beta_b)}$. This is faster than the order $n^{-4\beta_b/d}$ attainable under an equally spaced fixed design -- see e.g. \citet{MunkEtAl2005,WangEtAl2008,CaiLevineWang2009} and references therein. 
This suggests that random design can make variance estimation easier than its fixed-design analogue. It motivates asking whether such an improvement also holds when $\beta_b>1$ without assuming density smoothness.

Indeed, the case $\beta_b>1$ has been identified as substantially more difficult. \citet[Section~6 and note~49]{RichardsonRotnitzky2014} recorded a question posed by Robins as to whether the preceding random-design rate of $n^{-8\beta_b/(d+4\beta_b)}$ remains attainable above smoothness one. \citet{AronowLopatto2026} recently proved that it is not uniformly attainable when $\beta_b>1$ and $d>4\beta_b$ over the larger class of design densities constrained only to be bounded above and away from zero. 
The  close-pair benchmark considered below uses only Lipschitz continuity of $f$ and has optimized worst-case mean squared error of order $n^{-8/(d+4)}$; see \Cref{prop:raw-close-pair}. 
These estimators and the broader literature are discussed in \Cref{sec:related}.

This paper 
provides an improved estimator in the above regime.
Let
$\beta_g^\star=\beta_b(1-4\beta_b/d)/\{1+2\beta_b/d+8(\beta_b/d)^2\}$ 
be a threshold for the density regularity parameter $\beta_g$. 
In the low regularity regime $0<\beta_g\leq\beta_g^\star$, we construct a new two-scale 
estimator with mean squared error upper bounded by $O(n^{-4(\beta_b+1)/(d+4)})$.
The new two-scale estimator
has faster rate than the close-pair method.
In the higher regularity regime of $\beta_g>\beta_g^\star$, 
a higher-order influence function estimator from \citet{RobinsEtAl2008}
attains the comparison rate $O(n^{-8\beta_b/(d+4\beta_b)})$ under the additional uniform moment condition in \eqref{eq:hoif-uniform-moment-condition}.

The estimator we describe below uses a two-scale construction that goes beyond the one-scale close-pair construction and does not require density estimation as in the cited higher-order influence function construction \citep{RobinsEtAl2008}. 
This two-scale construction is motivated by an interpolation between polynomial approximation of $\beta_b>1$-smooth regression functions and geometric localization needed by the one-scale close pair estimator that can only utilize Lipschitz-ness. 

Specifically, we partition $[0,1]^d$ into $\Theta(n)$ cells of diameter $H_n\asymp n^{-1/d}$.
We then search for a suitable contrast vector, whose squared inner product with outcomes will eventually yield desired estimator(s). 
Within a suitable cell, an empirical projection of suitable contrast vectors removes the degree-$\ell$ local polynomial trend, 
where $\ell=\lceil\beta_b\rceil-1$. 
We then anchor the projected contrast at one pair whose normalized within-cell separation is at most $\varepsilon_n$. 
The square of the resulting unit-norm contrast has conditional bias of order at most $H_n^{2\beta_b}\varepsilon_n^2$. 
It turns out that there are order $n\varepsilon_n^d$ usable cells that yield desirable contrast vectors, implying a variance of order $(n\varepsilon_n^d)^{-1}$ that can be balanced with the squared-bias contribution of order $H_n^{4\beta_b}\varepsilon_n^4$. Choosing $\varepsilon_n\asymp n^{-(d-4\beta_b)/\{d(d+4)\}}$ balances these terms and yields $n^{-4(\beta_b+1)/(d+4)}$ rate for the mean squared error in the low regularity regime; 
improving on the close-pair benchmark \citep{RobinsEtAl2008}.

The main technical ingredients are the projected close-pair bias bound in \Cref{lem:pe-bias bound} and the Poissonization--de-Poissonization argument in \Cref{lem:pe-availability}, which shows that a suitable screening leaves order $n\varepsilon_n^d$ usable cells with exponentially high probability in the original fixed-size sample. We first formulate an ideal variational estimator and then replace it by a numerically feasible estimator. 
The appendix develops a numerically stable all-pairs ridge extension.
We also provide experiments comparing with a variety of standard estimators,
the close pairs estimator of \citet{RobinsEtAl2008},
a specialization of the double-cross-fit doubly robust expected conditional covariance estimator of \citet{newey2018cross,mcclean2026double},
and
an all-pairs kernel $U$-statistic motivated by \citet{MullerSchickWefelmeyer2003,DuSchick2009,ShenEtAl2020}.

\section{Model and main result}\label{sec:model}
Our models are described by H\"{o}lder smoothness classes.
Fix an open set $U\subset\R^d$ containing $[0,1]^d$. For $s>0$, set $\ell_s=\lceil s\rceil-1$ and $\alpha_s=s-\ell_s\in(0,1]$. For a function $g:U\to\R$ with the required derivatives, define
\begin{equation*}
[D^{\ell_s}g]_{\alpha_s;U}
=
\max_{|\nu|=\ell_s}
\sup_{x\neq y\in U}
\frac{|\partial^\nu g(x)-\partial^\nu g(y)|}{\|x-y\|^{\alpha_s}}
\end{equation*}
and
$\|g\|_{C^s(U)}=
\sum_{|\nu|\leq\ell_s}\|\partial^\nu g\|_{L^\infty(U)}
+[D^{\ell_s}g]_{\alpha_s;U}$.
Specifically, when $s$ is an integer, we will use 
$C^s:=C^{s-1,1}$ as a convention. 
Finally, for $L>0$, define $\cH^s(L;U):=\{g:U\to\R:\|g\|_{C^s(U)}\leq L\}$ to be the H\"{o}lder ball over $U$ of smoothness $s$ and radius $L$. 

A parameter is $\theta=(p,f,\sigma^2,Q)$, where $p$ is a design density on $[0,1]^d$, $f$ is a regression function, $\sigma^2>0$, and $Q=(Q_x)_{x\in[0,1]^d}$ is a Markov kernel for the conditional error law. We observe independent copies $(X_i,Y_i)$ satisfying $X_i\sim p$, $Y_i=f(X_i)+\varepsilon_i$, and $\varepsilon_i\mid X_i=x\sim Q_x$.
To define our model, further fix constants
\begin{equation*}
0<\underline p<1<\overline p<\infty,
\qquad
L\geq1,
\qquad
0<\underline v\leq\overline v<\infty,
\qquad
M_4>\overline v^{\,2},
\end{equation*}
where $L\geq1$ is necessary for a nonempty density class, and let $\Theta(\beta_b,\beta_g,d)$ be the collection of parameters satisfying:
\begin{enumerate}[label=(\roman*),leftmargin=2.2em]
\item $p$ is the restriction to $[0,1]^d$ of a function in $\cH^{\beta_g}(L;U)$ and $\underline p\leq p(x)\leq\overline p$ for Lebesgue-almost every $x\in[0,1]^d$;
\item $f$ is the restriction to $[0,1]^d$ of a function in $\cH^{\beta_b}(L;U)$;
\item for $p(x)dx$-almost every $x$,
\begin{equation*}
\int u\,Q_x(du)=0,
\qquad
\int u^2\,Q_x(du)=\sigma^2,
\qquad
\int u^4\,Q_x(du)\leq M_4,
\end{equation*}
with $\underline v\leq\sigma^2\leq\overline v$.
\end{enumerate}
The conditional error distribution may otherwise depend arbitrarily on $x$. Since a density is defined only up to a null set, in the following we will fix a version of $p$ for which the bounds in condition (i) hold pointwise. Finally, although the model also depends on $L,U,\underline{p},\overline{p},\underline{v},\overline{v},M_4$, we suppress them from the notation to keep the notation light.

Results on estimating $\sigma^2$ in the model above depend intricately on the smoothness parameters $\beta_b,\beta_g$. Whereas a detailed literature exists for large $\beta_g$ \citep{RobinsEtAl2008}, our goal is to address low to no smoothness regimes corresponding to $\beta_g$. Indeed, if one sets
\begin{equation}\label{eq:robins-density-threshold}
\beta_g^\star
=
\frac{\beta_b(1-4\beta_b/d)}
{1+2\beta_b/d+8(\beta_b/d)^2},
\end{equation}
then, under the additional conditions specified below, the high-regularity comparison for $\beta_g>\beta_g^\star$ uses the higher-order influence function estimator of \citet{RobinsEtAl2008}, specialized from the expected conditional covariance problem by taking $A=Y$, the two conditional mean nuisances equal to $f$, and $H_1=1$, $H_2=H_3=-Y$, and $H_4=Y^2$. Let $\widetilde\sigma_{n,\mathrm{HOIF}}^2$ denote this special case of \citet[Theorems~4.5--4.6]{RobinsEtAl2008}  
and let $\widehat\sigma_{n,\mathrm{HOIF}}^2=\clip_{[\underline v,\overline v]}(\widetilde\sigma_{n,\mathrm{HOIF}}^2)$, where $\clip_{[a,b]}(z)=\min\{b,\max\{a,z\}\}$. For this comparison we additionally require the uniform integrated bias and variance bounds in \eqref{eq:hoif-uniform-moment-condition}. 
In the low-regularity regime, let $\widetilde\sigma_{n,\mathrm{LR}}^2$ be the feasible witness estimator that we will describe below in \eqref{eq:feasible-lr-estimator}, with $\varepsilon_n$ chosen as in \eqref{eq:lr-optimal-bandwidth}. 
Define the smoothness-dependent estimator
\begin{equation}\label{eq:sharp-estimator-definition}
\widehat\sigma_{n,\mathrm{sharp}}^2
=
\begin{cases}
\widehat\sigma_{n,\mathrm{HOIF}}^2,&\beta_g>\beta_g^\star,\\
\widetilde\sigma_{n,\mathrm{LR}}^2,&0<\beta_g\leq\beta_g^\star.
\end{cases}
\end{equation}

Our main result is as follows.

\begin{theorem}[Upper bound and conditional HOIF comparison]\label{thm:main}
Let $\beta_b>1$, $\beta_g>0$, and $d>4\beta_b$. Under conditions (i)--(iii), the feasible witness estimator, with the screening constants and bandwidth of \Cref{prop:ideal_est_lr2}, satisfies \eqref{eq:feasible-estimator-rate} for every $\beta_g>0$.
$$
    \sup_{\theta\in\Theta(\beta_b,\beta_g,d)}
    \E_\theta(\widetilde\sigma_{n,\mathrm{LR}}^2-\sigma^2)^2
    \leq Cn^{-4(\beta_b+1)/(d+4)}.
$$
If, in addition, the high-regularity HOIF construction satisfies \eqref{eq:hoif-uniform-moment-condition} uniformly over $\Theta(\beta_b,\beta_g,d)$ whenever $\beta_g>\beta_g^\star$, then the estimator in \eqref{eq:sharp-estimator-definition} satisfies, for all sufficiently large $n$,
\begin{equation}\label{eq:sharp-smooth-piecewise}
\sup_{\theta\in\Theta(\beta_b,\beta_g,d)}
\E_\theta
\bigl(\widehat\sigma_{n,\mathrm{sharp}}^2-\sigma^2\bigr)^2
\leq
\begin{cases}
Cn^{-8\beta_b/(d+4\beta_b)},
&\beta_g>\beta_g^\star,\\[2mm]
Cn^{-4(\beta_b+1)/(d+4)},
&0<\beta_g\leq\beta_g^\star,
\end{cases}
\end{equation}
\end{theorem}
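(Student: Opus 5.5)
The plan is to treat the two branches of \eqref{eq:sharp-estimator-definition} separately. The low-regularity bound is the substantive part. The high-regularity bound reduces to citing \citet[Theorems~4.5--4.6]{RobinsEtAl2008} under the assumed moment condition \eqref{eq:hoif-uniform-moment-condition}.

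\textbf{Low-regularity branch.} Write $\widetilde\sigma_{n,\mathrm{LR}}^2-\sigma^2$ as the sum of a conditional bias and a conditionally centered fluctuation, conditioning on the design $X_1,\ldots,X_n$. Let $N$ be the number of usable cells retained by the screening. On the event $\{N\geq c\,n\varepsilon_n^d\}$, each retained cell contributes one squared unit-norm contrast $(a_j^\top Y)^2$. Its conditional mean is $\sigma^2+(a_j^\top f(X))^2$, because the $\varepsilon_i$ are independent, mean zero and homoskedastic given $X$. The trend term is bounded uniformly by $CH_n^{2\beta_b}\varepsilon_n^2$ by \Cref{lem:pe-bias bound}. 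The $N$ contrasts are supported on disjoint cells, so they are conditionally independent. Using the fourth-moment bound $M_4$, the conditional variance of the average is at most $C/N\leq C/(n\varepsilon_n^d)$; the squared-bias cross terms are also controlled by $M_4$. Hence on the good event the mean squared error is at most
\[
C\bigl(H_n^{4\beta_b}\varepsilon_n^4+(n\varepsilon_n^d)^{-1}\bigr).
\]
With $H_n\asymp n^{-1/d}$ and $\varepsilon_n$ as in \eqref{eq:lr-optimal-bandwidth}, namely $\varepsilon_n\asymp n^{-(d-4\beta_b)/\{d(d+4)\}}$, the two terms balance. Both are then of order $n^{-4(\beta_b+1)/(d+4)}$.

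On the complement of the good event, the estimator takes its default value. The feasible estimator is clipped or defaulted, so its error there is bounded. \Cref{lem:pe-availability} gives $\Pp(N<cn\varepsilon_n^d)\leq e^{-c'n\varepsilon_n^d}$. Since $n\varepsilon_n^d$ grows polynomially in $n$ when $d>4\beta_b$, this term is negligible. All constants depend only on the class parameters, which gives the uniformity over $\Theta(\beta_b,\beta_g,d)$. I would also check that the feasible estimator \eqref{eq:feasible-lr-estimator} inherits the ideal estimator's bounds as in \Cref{prop:ideal_est_lr2}. The reason is that its contrasts still annihilate the degree-$\ell$ polynomials and satisfy the separation bound. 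This yields \eqref{eq:feasible-estimator-rate} for every $\beta_g>0$, and in particular the second line of \eqref{eq:sharp-smooth-piecewise}.

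\textbf{High-regularity branch.} For $\beta_g>\beta_g^\star$, the specialized HOIF of \citet{RobinsEtAl2008} has bias of order $k^{-(\beta_g+\cdots)/d}$ terms plus $k^{-2\beta_b/d}$-type truncation bias, and variance of order $n^{-1}+k/n^2$. Under \eqref{eq:hoif-uniform-moment-condition}, these bounds hold uniformly. With their choice of $k\asymp n^{2d/(d+4\beta_b)}$, the threshold \eqref{eq:robins-density-threshold} is exactly the condition under which the density-estimation bias from $\beta_g$ is dominated. The mean squared error is then $Cn^{-8\beta_b/(d+4\beta_b)}$. Clipping to $[\underline v,\overline v]$ can only reduce the error, because $\sigma^2\in[\underline v,\overline v]$.

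The main obstacle is the availability step, which the argument uses as \Cref{lem:pe-availability}. We need enough cells that contain a close pair together with a well-conditioned design for the polynomial projection, with exponentially small failure probability in the fixed-$n$ sample. The cell counts are dependent under multinomial sampling. I would first Poissonize so that the counts in different cells become independent. I would then apply a Chernoff bound to the sum of the usable-cell indicators. To de-Poissonize, I would compare the fixed-$n$ sample to Poisson samples of size $n/2$ and $2n$, using monotonicity of the screening count in the sample.
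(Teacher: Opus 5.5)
Your reduction of the theorem matches the paper's. The low-regularity line is \Cref{prop:ideal_est_lr2}: squared conditional bias from \Cref{lem:pe-bias bound}, conditional variance $C/|\mathcal T_n(\varepsilon_n)|$ from disjoint cells and the fourth-moment bound, a bounded midpoint fallback off the good event, and an estimator threshold $c_0$ no larger than the availability constant. The high-regularity line is just the conditional bias--variance decomposition given the training sample $\mathcal T$, applied to the assumed bounds \eqref{eq:hoif-uniform-moment-condition}, plus the fact that clipping cannot increase squared error. Your heuristics about $k$, truncation bias and $k/n^2$ are not needed there; the assumption alone carries that branch.

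\textbf{The gap: your de-Poissonization step.} The step that fails is the one you single out as the main obstacle. Sandwiching the fixed-$n$ sample between Poisson samples of sizes $n/2$ and $2n$ works only for statistics that are monotone in the point set, and $|\mathcal C_n(h)|$ is not monotone. The screen \eqref{eq:certificate-cell-definition} requires $N_Q=q+2$ exactly, so adding a point to a certified cell destroys it, and so does removing one. Neither inclusion of samples therefore transfers a lower bound on the count, and the argument as you describe it does not go through.

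\textbf{How to repair it.} There are three fixes, in increasing order of strength.

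First, the crudest fix already suffices for this theorem. Under the Poisson law the cell indicators are independent, so Chernoff gives $\Pp^\circ\{S_h<cnh^d\}\leq e^{-c'nh^d}$. Conditioning on $K=n$ then yields an iid sample of size $n$, and the screens are permutation invariant. This conditioning costs only the factor $1/\Pp^\circ(K=n)\leq C\sqrt n$. At $h=\varepsilon_n$ you have $n\varepsilon_n^d\asymp n^{4(\beta_b+1)/(d+4)}$, so $C\sqrt n\exp(-c'n\varepsilon_n^d)$ is still negligible.

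Second, the paper avoids even the $\sqrt n$ loss, which matters only for the lemma's claim for every $t_n$ with $nt_n^d\to\infty$. It tilts by $e^{-tS_h}/L_t$, which keeps the cells independent. It then shows $\Pp_t(K=n)\leq C/\sqrt n$ via a Bernoulli-mixture anticoncentration argument.

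Third, the risk proofs in the paper actually use the simpler \Cref{lem:pe-availability-chebyshev}, with no Poissonization at all. Exact multinomial formulas for $\E\xi_Q$ and $\E\xi_Q\xi_R$ give $\operatorname{Cov}(\xi_Q,\xi_R)\leq (C/n)\alpha_Q\alpha_R$. Chebyshev then bounds the failure probability by $C/(n\varepsilon_n^d)$, which is of the same order as the variance term and hence enough for the rate.
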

Since $\widehat\sigma^2_{n,\mathrm{HOIF}}$ has already been detailed in \citet{RobinsEtAl2008}, here we focus on
the low-regularity regime. 

\section{Low-Regularity Regime \texorpdfstring{$0<\beta_g\leq\beta_g^\star$}{(0 < beta-g <= beta-g-star)}}\label{sec:est_low_reg}

In this section, we motivate the construction of estimators that attain the second line of the upper bound in \Cref{thm:main}, corresponding to the low-regularity regime $0<\beta_g\leq\beta_g^\star$. Throughout, let $\ell=\lceil\beta_b\rceil-1$ and $q=q_{d,\ell}=\binom{d+\ell}{\ell}$. Fix a basis $\psi_1,\ldots,\psi_q$ of the polynomials on $\R^d$ of total degree at most $\ell$, and write $\psi(u)=(\psi_1(u),\ldots,\psi_q(u))^\top$. Because the basis is fixed and polynomial, it is bounded and Lipschitz on $[0,1]^d$.

\subsection{Background and Intuition}\label{sec:intuition}

To gain intuition 
for estimating $\sigma^2$ 
one can start from the existing literature on difference-based estimation; going back to \citet{von1941mean}, and elaborated in successive works such as \citet{stone1984asymptotically,CaiLevineWang2009}, etc. 
Here, one takes squared contrasts of observations with nearby indices. 
As we argue below, 
existing indexing strategies 
are not directly optimal in our setting. 
Nevertheless, one can still study squared contrasts built from
selected observations whose realized covariates are suitably useful. 
The construction is summarized visually in \Cref{fig:feasible-witness-schematic}.

\begin{figure}
\centering
\begin{tikzpicture}[
  x=1cm,y=1cm,font=\small,>=Latex,
  sample/.style={circle,fill=black!45,inner sep=1.2pt},
  pair/.style={circle,draw=blue!70!black,fill=blue!18,
               line width=0.8pt,minimum size=5.2pt,inner sep=0pt},
  box/.style={draw=black!28,rounded corners=2pt,fill=black!2,
              inner xsep=4pt,inner ysep=3pt,align=center}
]
\begin{scope}
  \node[font=\bfseries] at (1.50,3.58) {(a) Screen cells};
  \fill[blue!6] (0.75,1.50) rectangle (1.50,2.25);
  \fill[blue!6] (2.25,0.75) rectangle (3.00,1.50);
  \draw[black!35,line width=0.35pt] (0,0) grid[step=0.75] (3,3);
  \draw[black,line width=0.8pt] (0,0) rectangle (3,3);
  \draw[blue!70!black,line width=0.95pt] (0.75,1.50) rectangle (1.50,2.25);
  \draw[blue!70!black,line width=0.95pt] (2.25,0.75) rectangle (3.00,1.50);
  \foreach \p in {(0.28,0.35),(0.56,1.08),(0.22,2.63),(0.92,0.30),
                   (1.22,0.60),(1.72,0.28),(2.22,0.45),(2.63,0.34),
                   (0.90,1.72),(1.16,1.91),(1.36,2.10),(1.82,1.62),
                   (2.45,1.03),(2.68,1.17),(2.84,1.38),(0.37,2.18),
                   (1.82,2.54),(2.33,2.12),(2.72,2.67)}
    \node[sample] at \p {};
  \node[blue!70!black,fill=white,inner sep=1pt,font=\scriptsize]
    at (1.13,2.43) {good $Q$};
  \node[box,text width=3.30cm,font=\scriptsize] at (1.50,-0.68)
    {$J_n\asymp n$, $\operatorname{diam}(Q)\asymp H_n$\\[-1pt]
     $N_Q=q+2$, Gram screen,\\[-1pt]
     $\mathcal E_Q(\varepsilon_n)\neq\varnothing$};
\end{scope}

\draw[->,line width=0.8pt] (3.24,1.50)--(3.90,1.50)
  node[midway,above,font=\scriptsize] {zoom};

\begin{scope}[xshift=4.15cm]
  \node[font=\bfseries] at (1.75,3.58) {(b) Project the pair};
  \fill[black!1] (0,0) rectangle (3.50,3.00);
  \draw[black,line width=0.8pt] (0,0) rectangle (3.50,3.00);
  \foreach \p in {(0.35,0.42),(0.70,1.28),(0.60,2.46),(1.12,0.76),
                   (1.34,2.22),(1.94,0.40),(2.24,1.20),(2.56,2.56),
                   (2.96,0.76),(3.18,1.84)}
    \node[sample] at \p {};
  \coordinate (p1) at (1.34,2.22);
  \coordinate (p2) at (1.68,2.40);
  \node[pair] at (p1) {};
  \node[pair] at (p2) {};
  \draw[blue!70!black,line width=1.05pt] (p1)--(p2);
  \draw[<->,blue!70!black,line width=0.55pt]
    ($(p1)+(0.00,0.25)$)--($(p2)+(0.00,0.25)$)
    node[midway,above,font=\scriptsize] {$\leq\varepsilon_n$};
  \node[anchor=west,blue!70!black,fill=white,inner sep=1pt,
        font=\scriptsize] at (1.85,2.15) {selected $e_Q$};
  \node[box,text width=3.65cm,font=\footnotesize] at (1.75,-0.68)
    {$\displaystyle
      \widetilde{\mathbf a}_Q=
      \frac{R_Qd_{e_Q}}
      {\sqrt{d_{e_Q}^{\top}R_Qd_{e_Q}}}$\\[-1pt]
     $\Phi_Q^\top\widetilde{\mathbf a}_Q=0$,
     $\|\widetilde{\mathbf a}_Q\|_2=1$};
\end{scope}

\draw[->,line width=0.8pt] (7.90,1.50)--(8.58,1.50);

\begin{scope}[xshift=8.82cm]
  \node[font=\bfseries] at (2.65,3.58) {(c) Average};
  \node[font=\footnotesize] at (2.65,2.98)
    {$Z_Q=(\widetilde{\mathbf a}_Q^\top Y_Q)^2$ for each good cell};
  \node[box,minimum width=1.05cm] (z1) at (0.65,2.18) {$Z_{Q_1}$};
  \node[box,minimum width=1.05cm] (z2) at (2.00,2.18) {$Z_{Q_2}$};
  \node at (3.16,2.18) {$\cdots$};
  \node[box,minimum width=1.05cm] (zm) at (4.42,2.18) {$Z_{Q_m}$};
  \node[box,text width=4.75cm,minimum height=1.02cm] (avg) at (2.65,0.95)
    {$\displaystyle
      \widetilde\sigma_{n,\mathrm{LR}}^2
      =\frac1m\sum_{r=1}^m Z_{Q_r}$\\[-1pt]
     $m=|\mathcal T_n(\varepsilon_n)|\gtrsim n\varepsilon_n^d$};
  \draw[->,line width=0.65pt] (z1.south)--($(avg.north)+(-1.35,0)$);
  \draw[->,line width=0.65pt] (z2.south)--($(avg.north)+(-0.45,0)$);
  \draw[->,line width=0.65pt] (zm.south)--($(avg.north)+(1.35,0)$);
\end{scope}
\end{tikzpicture}
\caption{Construction of our first estimator, shown schematically in two dimensions. Partition the design space into cells of diameter $\approx H_n$ and retain cells containing $q+2$ observations that pass certain screens. In each retained cell, choose the lexicographically first close pair $e_Q$, project its contrast onto $\mathcal N(\Phi_Q^\top)$, and normalize. This exactly annihilates degree-$\ell$ polynomial trends while preserving localization at the fine normalized scale $\varepsilon_n$. Finally, average these squared projected contrasts over the retained cells. }
\label{fig:feasible-witness-schematic}
\end{figure}

We elaborate on this below.

\begin{itemize}[leftmargin=2em]
    \item First consider generic realized covariate points $X_1,\ldots,X_k\in[0,1]^d$, with the precise value of $k$ to be specified later. Let the corresponding realized outcomes be $Y_r$, $r=1,\ldots,k$. 
    Consider a squared-difference estimator obtained from a contrast vector $\mathbf a=(a_1,\ldots,a_k)^\top\in\R^k$ through $Z_{\mathbf a}=(\sum_{r=1}^k a_rY_r)^2$. Then
    \begin{equation*}
        \E(Z_{\mathbf a}\mid X_1,\ldots,X_k)
        =\sigma^2\|\mathbf a\|_2^2
        +\left(\sum_{r=1}^k a_rf(X_r)\right)^2.
    \end{equation*}
    By normalizing so that $\|\mathbf a\|_2=1$, 
    the bias (conditional on the design) is captured by $(\sum_{r=1}^k a_rf(X_r))^2$. 
    At this point, it is natural to expand $f$ through its Taylor polynomial around a suitable point. To make the bias small, however, one must choose this point and ensure that the lower-order Taylor terms do not contribute much.

    \item Based on the first observation, suppose we try to find a collection of Taylor polynomial annihilators. The number $k$ should be related to the number of independent polynomial terms that we aim to annihilate. It is therefore natural to consider the space $\mathcal P_{\leq\ell}$ of all polynomials of total degree at most $\ell=\lceil\beta_b\rceil-1$, whose dimension is $q=\binom{d+\ell}{\ell}$.

    To find an annihilator, it is natural to look for covariate points that are close. One can first divide $[0,1]^d$ into cells of side length of order $H_n\to0$ and then consider the $m$ random observations falling in a cell $Q$. Denote their covariates by $X_{i_r}$, $r=1,\ldots,m$, and their within-cell coordinates (normalized so that the cell is rescaled to $[0,1]^d$) by $U_{i_r}\in[0,1]^d$. Let
    $    \Phi_Q=
        \begin{pmatrix}
        \psi(U_{i_1})^\top\\
        \vdots\\
        \psi(U_{i_m})^\top
        \end{pmatrix}$.
    One can then seek a cell-specific contrast $\mathbf a_Q$ satisfying $\Phi_Q^\top\mathbf a_Q=0$, where the subscript $Q$ emphasizes that the choice is specific to the cell, and average the squared contrast 
    $Z_{\mathbf a_Q}$ over suitable cells. A generic unit annihilator gives a conditional-bias bound of order $H_n^{2\beta_b}$. In the regime $nH_n^d\lesssim1$, averaging one squared contrast per usable cell gives a variance bound of order $\{n(nH_n^d)^q\}^{-1}$.
    Balancing squared bias and variance yields a mean squared-error upper bound of order
    $      n^{-4\beta_b(q+1)/(dq+4\beta_b)}$,
    which is slower than the desired $n^{-4(\beta_b+1)/(d+4)}$ in our regime. 
    
    This suboptimality stems from the trade-off faced by the cell size $H_n$. A smaller $H_n$ improves the bias, but worsens the variance, due to requiring at least $q+1$ observations to fall in the same small cell. 
    This suggests that one should not always demand that at least $q+1$ observations fall in the region corresponding to the finest scale. Instead of using an arbitrary annihilator,
    one can seek a suitably optimal one.

    \item At this point, it is also natural to ask what would happen if we did not demand at least $q+1$ observations, but instead used the more classical choice $k=2$ and averaged $(Y_i-Y_j)^2/2$ over pairs whose covariates are close. The close-pair contrast $e_i-e_j$ annihilates constants, but not nonconstant polynomial trends. Consequently, when $\beta_b>1$, it can exploit only the Lipschitz regularity of $f$ and saturates at the standard close-pair rate $n^{-8/(d+4)}$, see Section \ref{cp}. Thus the close-pair estimator does not benefit from smoothness  beyond one.

    \item To obtain mean squared error better than the preceding two benchmarks, one can address the failure point of each and combine their strengths. The estimator considered below can be viewed as an interpolation between pair-based estimation and the degree-$\ell$ polynomial-annihilation for uniformly close points.

    One way to see this is to consider the vector $v=e_i-e_j\in\R^m$ corresponding to a close-pair contrast. Although $v$ does not exactly annihilate nonconstant polynomials, it is an approximate annihilator because
    \begin{equation*}
        \Phi_Q^\top v=\psi(U_i)-\psi(U_j),
    \end{equation*}
    which is $O(\|U_i-U_j\|)$ by the Lipschitz continuity of the basis. Thus, one only needs to find a vector that is close to $v$ but exactly belongs to $\mathcal N(\Phi_Q^\top)$. The remaining points used to construct this correction need not be close to the pair, as long as they lie in the same cell.

    This suggests a two-scale strategy. First create cells of diameter of order $H_n$. Within each cell, find a closer pair to create an initial candidate vector $v$. Then use the remaining points in the cell to 
    slightly correct $v$, keeping the favorable variance properties of the close-pair contrast. This yields the constrained optimization problem of finding $\mathbf a_Q\in\mathcal N(\Phi_Q^\top)$ that minimizes $\|\mathbf a_Q-v\|_2$. A solution on typical, well-behaved cells yields the desired bound. As the analysis below shows, such cells essentially require full column rank and uniform conditioning of $\Phi_Q$, together with enough observations in $Q$; sufficiently many such cells occur with high probability.

    \item There is another closely related way to motivate the estimator. 
    In principle, it would help 
    to use
    a unit annihilator that minimizes the worst-case conditional bias $(\sum_{r=1}^m a_rf(X_{i_r}))^2$ over $f\in\cH^{\beta_b}(L;U)$. 
    One can show that this optimum is attained at some vector $\mathbf a_Q^\star$. It is then natural to work with $Z_{\mathbf a_Q^\star}$ over suitable cells. The success of this strategy depends on 
    the optimized contrast achieving a better bias than the generic $H_n^{2\beta_b}$ bound.
    However, the vector $\mathbf a_Q^\star$ is only a theoretical construct and it remains to be seen how to implement this empirically. 

    The close-pair contrast provides a route. Project the vector $v$ corresponding to a close pair onto $\mathcal N(\Phi_Q^\top)$ and normalize the projection, as described above. Under suitable conditioning, the square of the resulting feasible unit contrast has conditional bias bounded by $CH_n^{2\beta_b}\varepsilon_n^2$. The ideal minimizer $\mathbf a_Q^\star$ has no larger worst-case conditional bias. 
    Since one can also show that there are sufficiently many cells containing such close-pair witnesses, the variance of the resulting average is suitably controlled.
\end{itemize}

\subsection{An Ideal but Numerically Infeasible Estimator}\label{sec:estimators}

We now elaborate on the preceding intuition and describe the estimators mathematically. As mentioned above, we first focus on an idealized estimator that may be difficult to compute in practice and then derive a practical estimator from it.

First, we need some notation and conventions. Fix $\lambda_0>0$ and let
\begin{equation}\label{eq:lr-partition-scales}
    r_n=\max\left\{0,\operatorname{round}\left(\log_2\frac{n}{\lambda_0}\right)\right\},
    \qquad
    J_n=2^{r_n},
    \qquad
    H_n=J_n^{-1/d}.
\end{equation}
Next, choose nonnegative integers $s_{n,1},\ldots,s_{n,d}$ whose sum is $r_n$ and which differ from one another by at most one. Starting with $[0,1]^d$, split coordinate $j$ into $2^{s_{n,j}}$ equal intervals. The resulting partition $\cQ_n$ consists of $J_n$ axis-aligned rectangles, or cells, 
of equal volume $J_n^{-1}$, corresponding to the ones from \Cref{sec:intuition}. Cell boundaries are assigned by a fixed half-open convention. Every side length lies between $\kappa_HH_n$ and $\kappa_H'H_n$ for constants $0<\kappa_H<\kappa_H'$ independent of $n$. In particular, $J_n\asymp n$, $H_n\asymp n^{-1/d}$, and $\operatorname{diam}(Q)\leq C_HH_n$ for every $Q\in\cQ_n$ and all sufficiently large $n$.

Write the closure of each cell as $\overline Q=b_Q+A_Q[0,1]^d$, where $A_Q$ is diagonal. For $X_i\in Q$, let $U_i=A_Q^{-1}(X_i-b_Q)\in[0,1]^d$ be its normalized coordinate.
We are now ready to build on the final intuition from \Cref{sec:intuition}.

Following the same line of logic, let $I_Q=\{i:X_i\in Q\}$ and $N_Q=|I_Q|$. Write the indices as $i_1<\cdots<i_{N_Q}$ and let $U_{i_r}$, $r=1,\ldots,N_Q$, denote the corresponding normalized covariates. Strictly speaking, the indices $i_r$ depend on $Q$; we suppress that dependence when it is clear from context. Define
\begin{equation*}
    \Phi_Q=
    \begin{pmatrix}
    \psi(U_{i_1})^\top\\
    \vdots\\
    \psi(U_{i_{N_Q}})^\top
    \end{pmatrix}
    \in\R^{N_Q\times q},
    \qquad
    Y_Q=(Y_{i_1},\ldots,Y_{i_{N_Q}})^\top,
    \qquad
    f_Q=(f(X_{i_1}),\ldots,f(X_{i_{N_Q}}))^\top.
\end{equation*}
For a contrast $\mathbf a\in\R^{N_Q}$, define the worst-case squared regression contrast 
\begin{equation}\label{eq:ideal-cell-objective}
    \mathcal B_Q(\mathbf a)
    =\sup_{f\in\cH^{\beta_b}(L;U)}
    \left(\sum_{r=1}^{N_Q}a_rf(X_{i_r})\right)^2.
\end{equation}
For a unit contrast, \eqref{eq:ideal-cell-objective} equals the worst-case conditional bias of $(\mathbf a^\top Y_Q)^2$. The idealized problem of minimizing this bias subject to polynomial annihilation is
\begin{equation}\label{eq:ideal-cell-optimization}
    \inf\left\{
    \mathcal B_Q(\mathbf a):
    \|\mathbf a\|_2=1,
    \ \Phi_Q^\top\mathbf a=0
    \right\}.
\end{equation}
Suppose for the moment that a vector $\mathbf a_Q^\star$ attains this infimum. We will argue that, for sufficiently well-behaved cells, $\mathbf a_Q^\star$ serves our purpose. The first consideration is bias, and this is where one must be most careful. As argued above, a naive analysis using a generic annihilator is suboptimal. To show that $\mathbf a_Q^\star$ gains in bias, we produce a feasible witness for the optimization problem, establish an improved bias bound for that witness, and then invoke the optimality of $\mathbf a_Q^\star$.

Again, one can 
gain 
intuition from the comparison between the close-pair estimator and the degree-$\ell$ polynomial-annihilation estimator based on points in $Q$. Consider a cell $Q$ containing precisely $q+2$ observations: two observations provide the close pair, and the remaining $q$ observations provide the degrees of freedom needed to correct the $q$ polynomial moments. Suppose two 
indices $i_{r_1}$ and $i_{r_2}$ satisfy
\begin{equation*}
    \|U_{i_{r_1}}-U_{i_{r_2}}\|_\infty\leq\varepsilon_n.
\end{equation*}
Their separation on the scale of the actual covariates $X_{i_{r_1}},X_{i_{r_2}}$ is then $O(H_n\varepsilon_n)$. Let $e=(i_{r_1},i_{r_2})$ and $d_e=e_{r_1}-e_{r_2}\in\R^{q+2}$. The vector closest to $d_e$ under the constraint $\Phi_Q^\top\mathbf a=0$ is the orthogonal projection of $d_e$ onto $\mathcal N(\Phi_Q^\top)$. More explicitly, on the event when $\Phi_Q^\top\Phi_Q$ is invertible,  write
\begin{equation}\label{eq:cell-projections}
    G_Q=\Phi_Q^\top\Phi_Q,
    \qquad
    P_Q=\Phi_QG_Q^{-1}\Phi_Q^\top,
    \qquad
    R_Q=I_{N_Q}-P_Q,
    \qquad
    v_e=d_e^\top R_Qd_e.
\end{equation}
Then $R_Qd_e$ solves the distance
minimization problem, and, whenever $v_e>0$, the normalized vector $R_Qd_e/\sqrt{v_e}$ is feasible for \eqref{eq:ideal-cell-optimization}. Most importantly, under additional regularity conditions on $Q$ that hold for sufficiently many cells with high probability, the absolute bias (conditional on   $X$) of this normalized witness satisfies
\begin{equation*}
    \mathcal B_Q\left(\frac{R_Qd_e}{\sqrt{v_e}}\right)
    \leq CH_n^{2\beta_b}\varepsilon_n^2,
\end{equation*}
which improves on the naive $H_n^{2\beta_b}$ bound for generic annihilators. Since there are $\Theta(n)$ cells and we search for a close pair in each one, we expect order $n\varepsilon_n^d$ such cells. Averaging over the good cells then gives variance of order $(n\varepsilon_n^d)^{-1}$. The desired mean squared error follows by balancing squared bias $H_n^{4\beta_b}\varepsilon_n^4$ and variance $(n\varepsilon_n^d)^{-1}$. Since $H_n\asymp n^{-1/d}$, this balance is achieved by
\begin{equation}\label{eq:lr-optimal-bandwidth}
    a_\star=\frac{d-4\beta_b}{d(d+4)},
    \qquad
    \varepsilon_n\asymp n^{-a_\star}.
\end{equation}

Although this intuition justifies constructing estimators based on $\mathbf{a}_Q^\star$, the optimization problem
in \eqref{eq:ideal-cell-optimization} may not be practical. 

The preceding discussion also reveals an estimator based only on empirically observable quantities; this is the core estimator developed below.
For completeness,
we first complete the analysis of the computationally infeasible estimator based on $\mathbf a_Q^\star$. To do so, consider cells on which the optimized contrast $\mathbf{a}_{Q}^\star$ improves on the generic $H_n^{2\beta_b}$ bound by a factor of order $\varepsilon_n^2$. Specifically, for $C>0$, define
\begin{equation}\label{eq:ideal-good-cells}
    \mathcal S_n(\varepsilon_n,C)
    =\left\{
    Q\in\cQ_n:
    N_Q=q+2,
    \ \mathcal B_Q(\mathbf a_Q^\star)
    \leq CH_n^{2\beta_b}\varepsilon_n^2
    \right\}.
\end{equation}
We will show that, for a sufficiently large constant $C$, $|\mathcal S_n(\varepsilon_n,C)|\gtrsim n\varepsilon_n^d$ with high probability. We first justify the existence of $\mathbf a_Q^\star$.

\begin{lemma}\label{lem:existence_aqstar}
On the event $N_Q\geq q+1$, the minimum in \eqref{eq:ideal-cell-optimization} is attained. Thus there exists
\begin{equation*}
    \mathbf a_Q^\star\in
    \operatorname*{argmin}\left\{
    \mathcal B_Q(\mathbf a):
    \|\mathbf a\|_2=1,
    \ \Phi_Q^\top\mathbf a=0
    \right\}.
\end{equation*}
\end{lemma}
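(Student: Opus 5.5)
The plan is to show that the feasible set is nonempty and compact, and that $\mathcal B_Q$ is finite and lower semicontinuous on it. Weierstrass's theorem (in its lower semicontinuous form) then gives the minimizer.

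\emph{Feasible set.} Let
\[
F_Q=\{\mathbf a\in\R^{N_Q}:\|\mathbf a\|_2=1,\ \Phi_Q^\top\mathbf a=0\}.
\]
The matrix $\Phi_Q^\top$ is $q\times N_Q$, so its rank is at most $q$. By rank--nullity, $\mathcal N(\Phi_Q^\top)$ has dimension at least $N_Q-q\geq1$ on the event $N_Q\geq q+1$. It therefore contains a unit vector, and $F_Q\neq\varnothing$. Moreover, $F_Q$ is the intersection of the unit sphere with a linear subspace, so it is closed and bounded, hence compact.

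\emph{Finiteness of $\mathcal B_Q$.} Every $f\in\cH^{\beta_b}(L;U)$ satisfies $\|f\|_{L^\infty(U)}\leq\|f\|_{C^{\beta_b}(U)}\leq L$. By Cauchy--Schwarz,
\[
\Bigl|\sum_r a_rf(X_{i_r})\Bigr|\leq L\|\mathbf a\|_1\leq L\sqrt{N_Q}\,\|\mathbf a\|_2 .
\]
Hence $0\leq\mathcal B_Q(\mathbf a)\leq L^2N_Q\|\mathbf a\|_2^2<\infty$ for every $\mathbf a$.

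\emph{Lower semicontinuity.} For each fixed $f$, the map $\mathbf a\mapsto(\sum_ra_rf(X_{i_r}))^2$ is continuous, being the square of a linear functional. A pointwise supremum of continuous functions is lower semicontinuous, so $\mathcal B_Q$ is lower semicontinuous. In fact it is continuous. Writing $\mathcal B_Q(\mathbf a)=\bigl(\sup_f|\mathbf a^\top f_Q|\bigr)^2$, the map $\mathbf a\mapsto\sup_f|\mathbf a^\top f_Q|$ is a seminorm bounded by $L\sqrt{N_Q}\|\mathbf a\|_2$. It is therefore Lipschitz, and its square is continuous.

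\emph{Conclusion.} A lower semicontinuous real-valued function on a nonempty compact set attains its infimum. This yields $\mathbf a_Q^\star$. There is no genuine obstacle here. The only point needing care is that the supremum over the infinite-dimensional Hölder ball stays finite and well behaved, which the uniform sup-norm bound above handles. Measurability of the selection $\mathbf a_Q^\star$ as a function of the design is not part of the lemma's claim.
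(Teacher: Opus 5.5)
Your proof is correct and follows essentially the same route as the paper: nonemptiness by rank--nullity, compactness of the sphere--subspace intersection, continuity of $\mathcal B_Q$ from the uniform bound $\|f\|_\infty\leq L$, and Weierstrass. The paper gets continuity by bounding the difference of squares directly by $2L^2N_Q\|\mathbf a-\mathbf b\|_2$, where you use the seminorm argument. It also adds a Borel-measurable selection of $\mathbf a_Q^\star$, which is needed later for the $X$-measurability hypothesis in \Cref{lem:cell-average-risk} but, as you note, is not part of this lemma's claim.
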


The next lemma contains the crux of the argument and asserts that there are sufficiently many cells on which the optimized contrast has the desired bias. Its proof uses the witness construction above.

\begin{lemma}\label{lem:enough_good_Q}
Under conditions (i)--(iii), with $\beta_b>1$ and $d>4\beta_b$, there is $C>0$ such that, for every sequence 
$(\varepsilon_n)_{n\ge 1}$ such that
$\varepsilon_n\to0$ and $n\varepsilon_n^d\to\infty$, there exist constants $c_0,c_1,c_2>0$ for which
\begin{equation}\label{eq:enough-ideal-good-cells}
    \Pp\left\{
    |\mathcal S_n(\varepsilon_n,C)|<c_0n\varepsilon_n^d
    \right\}
    \leq c_1\exp(-c_2n\varepsilon_n^d)
\end{equation}
for all sufficiently large $n$.
\end{lemma}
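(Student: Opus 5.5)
We split the argument into a deterministic bias bound on well-conditioned cells and a probabilistic count of such cells.

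\emph{Deterministic step.} Fix constants $\gamma>0$ (a Gram lower bound) and $C_\varepsilon\geq1$. Call $Q$ \emph{good} if three conditions hold: $N_Q=q+2$; the $q\times q$ matrix $\Phi_{Q,-}$ formed by the $q$ non-pair rows has $\sigma_{\min}(\Phi_{Q,-})\geq\gamma$; and two indices satisfy $\|U_{i_{r_1}}-U_{i_{r_2}}\|_\infty\leq\varepsilon_n$.

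On a good cell, take $d_e$ and $R_Q$ as in \eqref{eq:cell-projections}. We first show $v_e\geq c>0$. Indeed, $P_Qd_e$ is the least-squares fit of $d_e$ by $\Phi_Q$. Restricted to the $q$ non-pair rows, the fitted coefficient $\theta$ satisfies $\|\Phi_Q\theta\|_2\leq\|d_e\|_2=\sqrt2$, hence $\|\theta\|\lesssim\gamma^{-1}$. On the pair rows, $\psi(U_{r_1})^\top\theta-\psi(U_{r_2})^\top\theta=O(\varepsilon_n/\gamma)$, so the residual on those two coordinates is $\approx(1,-1)$. Therefore $v_e\geq 1$ for large $n$.

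Next, for any $f\in\cH^{\beta_b}(L;U)$, write $f=T+\rho$, where $T$ is the degree-$\ell$ Taylor polynomial at $X_{i_{r_1}}$. Since $R_Qd_e\perp\mathrm{col}(\Phi_Q)$, the polynomial $T$ is annihilated, and
\[
(R_Qd_e)^\top f_Q=d_e^\top R_Q\rho_Q=\rho(X_{i_{r_1}})-\rho(X_{i_{r_2}})-(P_Qd_e)^\top\rho_Q .
\]
The first difference is at most $CH_n^{\beta_b}\varepsilon_n$: with the Taylor center at $X_{i_{r_1}}$ we have $\rho(X_{i_{r_1}})=0$ and $|\rho(X_{i_{r_2}})|\lesssim (H_n\varepsilon_n)^{\beta_b}$. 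For the second term, compute $P_Qd_e=\Phi_QG_Q^{-1}(\psi(U_{r_1})-\psi(U_{r_2}))$, which has norm $O(\varepsilon_n/\gamma)$. Combined with $\|\rho_Q\|_\infty\lesssim H_n^{\beta_b}$, this gives $O(H_n^{\beta_b}\varepsilon_n)$.

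Thus $\mathcal B_Q(R_Qd_e/\sqrt{v_e})\leq CH_n^{2\beta_b}\varepsilon_n^2$. Since $\mathbf a_Q^\star$ exists by \Cref{lem:existence_aqstar} and is optimal, every good cell belongs to $\mathcal S_n(\varepsilon_n,C)$.

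\emph{Probabilistic step.} This is the main obstacle, because cell counts are dependent under the multinomial design. We Poissonize by drawing $N'\sim\mathrm{Poisson}(n/2)$ points, so that cell contents become independent. For each cell, $\mu_Q=\tfrac n2\int_Q p\asymp 1$ by the bounds on $p$ and $J_n\asymp n$. Given $N_Q=q+2$, the normalized points are i.i.d. with a density on $[0,1]^d$ bounded between $\underline p/\overline p$ and $\overline p/\underline p$; no smoothness of $p$ is needed.

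The probability of a close pair is $\gtrsim\varepsilon_n^d$, since that event requires only a ball of volume $\varepsilon_n^d$. The Gram event has probability $\geq 1/2$ given the other points, provided $\gamma$ is small. This follows because $\det\Phi_{Q,-}$ is a nonzero polynomial in the $q$ points (unisolvence of generic points for $\mathcal P_{\leq\ell}$), and its small-ball probability under a bounded density tends to $0$ as $\gamma\to0$ uniformly over the density class. Together with the bound on $\sigma_{\max}$, this gives $\sigma_{\min}\geq\gamma$. Independence of the pair event from the Gram event holds since they involve disjoint points. Hence each cell is good with probability $\geq c\varepsilon_n^d$, independently across cells.

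A Chernoff bound then yields at least $c_0 n\varepsilon_n^d$ good cells with probability $1-e^{-cn\varepsilon_n^d}$. To de-Poissonize, use $\Pp(N'>n)\leq e^{-cn}$ and couple so that the Poisson sample is a subset of the original one. However, goodness requires exact counts, so it is not monotone in the sample. We handle this with a second coupling: draw the remaining $n-N'$ points and discard cells hit by them. Each cell is hit with probability bounded away from $1$ (the expected number of extra points per cell is $O(1)$). Conditional on the good set, a second Chernoff bound over these nearly independent hits keeps a constant fraction of the good cells with exponentially small failure probability. Alternatively, one can apply a bounded-differences (McDiarmid) inequality directly to the good-cell count as a function of the $n$ i.i.d. points. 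Each point changes the count by at most $2$, but this only gives a tail $e^{-c(n\varepsilon_n^d)^2/n}$, which is too weak; this is why the Poisson route is preferred.

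Finally, since $n\varepsilon_n^d\ll n$, the errors $e^{-cn}$ are absorbed into $c_1e^{-c_2n\varepsilon_n^d}$.
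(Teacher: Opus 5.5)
Your argument is correct and has the same skeleton as the paper's proof. The paper also shows that every certificate cell (exactly $q+2$ points, a well-conditioned Gram matrix, and a close pair with $v_e\ge v_0$) lies in $\mathcal S_n(\varepsilon_n,C)$, using the projected-pair bias bound and the optimality of $\mathbf a_Q^\star$. It then invokes an exponential availability bound (\Cref{lem:pe-availability}).

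The local ingredients differ only cosmetically. The paper expands $f$ at the cell center and controls the Lipschitz constant of the remainder in normalized coordinates, whereas your expansion at an endpoint of the pair gives the first term as $O((H_n\varepsilon_n)^{\beta_b})$ directly. The paper certifies conditioning with fixed disjoint boxes around a unisolvent configuration, whereas you use a small-ball bound for $\det\Phi_{Q,-}$.

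The genuine difference is de-Poissonization. The paper Poissonizes at intensity $np$ and conditions on exactly $n$ points. It avoids the $\sqrt n$ loss from $\Pp^\circ(K=n)\asymp n^{-1/2}$ by an exponential tilt combined with the anticoncentration bound $\Pp_t(K=n)\le C/\sqrt n$, which it obtains by splitting a fair Bernoulli component off each tilted cell count. You instead embed a rate-$n/2$ Poisson sample in the real sample and keep the Poisson-good cells that receive none of the remaining $n-N'$ points. This works because a cell's content, and hence its goodness, is unchanged if no new point enters it, and it avoids any local-limit estimate.

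Your second concentration step (``nearly independent hits'') needs a precise tool, because those hit indicators come from a fixed number of multinomial draws. One option is negative association of empty-bin indicators. A simpler option is to dominate the remaining points by an independent Poisson process of intensity $2np$, which contains them with probability $1-e^{-cn}$. The events ``good in the first process and empty in the second'' are then independent across cells, each with probability at least $c\varepsilon_n^d$, and one Chernoff bound finishes.

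Finally, in your proof that $v_e\ge 1$, the bound $\|\theta\|\lesssim\gamma^{-1}$ only shows that the difference of the two fitted values on the pair rows is small, not that each one is. So ``residual $\approx(1,-1)$'' does not follow as written. The conclusion is nevertheless immediate from $v_e=2-\Delta\psi^\top G_Q^{-1}\Delta\psi\ge 2-\|\Delta\psi\|_2^2/\gamma^2$, where $\Delta\psi=\psi(U_{i_{r_1}})-\psi(U_{i_{r_2}})$.
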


Given the lemma, define $\mathcal G_n(\varepsilon_n)=\{|\mathcal S_n(\varepsilon_n,C)|\geq c_0n\varepsilon_n^d\}$ and the 
ideal estimator
\begin{equation}\label{eq:ideal-lr-estimator}
    \widehat\sigma_{n,\mathrm{LR}}^2
    =
    \begin{cases}
    \displaystyle
    \frac{1}{|\mathcal S_n(\varepsilon_n,C)|}
    \sum_{Q\in\mathcal S_n(\varepsilon_n,C)}
    (\mathbf a_Q^{\star\top}Y_Q)^2,
    &\mathcal G_n(\varepsilon_n),\\[3mm]
    (\underline v+\overline v)/2,
    &\mathcal G_n(\varepsilon_n)^c.
    \end{cases}
\end{equation}
The fallback uses the bounds $\underline v$ and $\overline v$, which are part of the model definition.
This estimator satisfies the desired upper bound.

\begin{proposition}[Ideal estimator]\label{prop:ideal_est_lr}
Let $\varepsilon_n$ be chosen as in \eqref{eq:lr-optimal-bandwidth}. Under conditions (i)--(iii), with $\beta_b>1$ and $d>4\beta_b$,
\begin{equation}\label{eq:ideal-estimator-rate}
    \sup_{\theta\in\Theta(\beta_b,\beta_g,d)}
    \E_\theta(\widehat\sigma_{n,\mathrm{LR}}^2-\sigma^2)^2
    \leq Cn^{-4(\beta_b+1)/(d+4)}.
\end{equation}
\end{proposition}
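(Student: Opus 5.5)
We split the risk according to whether the screening event $\mathcal G_n=\mathcal G_n(\varepsilon_n)$ holds. Write
\[
\E_\theta(\widehat\sigma_{n,\mathrm{LR}}^2-\sigma^2)^2
=\E_\theta\bigl[(\widehat\sigma_{n,\mathrm{LR}}^2-\sigma^2)^2\ind_{\mathcal G_n}\bigr]
+\E_\theta\bigl[(\widehat\sigma_{n,\mathrm{LR}}^2-\sigma^2)^2\ind_{\mathcal G_n^c}\bigr].
\]

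\textbf{The complement of $\mathcal G_n$.} On $\mathcal G_n^c$ the estimator equals $(\underline v+\overline v)/2$, so the squared error is at most $(\overline v-\underline v)^2$. \Cref{lem:enough_good_Q} gives $\Pp(\mathcal G_n^c)\le c_1\exp(-c_2 n\varepsilon_n^d)$. With $\varepsilon_n\asymp n^{-a_\star}$ we have $n\varepsilon_n^d\asymp n^{1-da_\star}=n^{8(\beta_b+1)/(d+4)}$, which is a positive power of $n$. Hence this term is $o(n^{-4(\beta_b+1)/(d+4)})$. The constants in \Cref{lem:enough_good_Q} must not depend on $\theta$. The lemma is stated under (i)--(iii) with constants depending only on the model constants, so we use it in that form.

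\textbf{Conditioning on the design.} The key structural fact is that $\mathcal S_n$, $\mathcal G_n$ and the vectors $\mathbf a_Q^\star$ depend only on $X=(X_1,\dots,X_n)$. This holds because $\mathcal B_Q$ takes a supremum over the whole Hölder ball and so does not involve the true $f$. If the argmin is not unique, we fix a measurable selection depending only on the covariates. Given $X$ on $\mathcal G_n$, let $m=|\mathcal S_n|\ge c_0n\varepsilon_n^d$ and $Z_Q=(\mathbf a_Q^{\star\top}Y_Q)^2$. The cells are disjoint, so the $Z_Q$ are conditionally independent given $X$.

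Write $b_Q=\mathbf a_Q^{\star\top}f_Q$ and $\xi_Q=\mathbf a_Q^{\star\top}\varepsilon_Q$. Then $\E(Z_Q\mid X)=\sigma^2+b_Q^2$. By the definition of $\mathcal S_n$, $b_Q^2\le\mathcal B_Q(\mathbf a_Q^\star)\le CH_n^{2\beta_b}\varepsilon_n^2$. For the variance, $\Var(Z_Q\mid X)\le \E(Z_Q^2\mid X)\le 8\E(\xi_Q^4\mid X)+8b_Q^4$. Since $\|\mathbf a_Q^\star\|_2=1$, the errors are conditionally independent and mean zero, and the fourth moments are bounded by $M_4$, we get
\[
\E(\xi_Q^4\mid X)\le 3\Bigl(\sum_r a_r^2\Bigr)^2\max(M_4,\overline v^{\,2})\lesssim M_4 .
\]
Also $b_Q^4$ is bounded. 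Here we use only that the $N_Q=q+2$ errors are independent with conditional fourth moments at most $M_4$, which avoids any difficulty with dependence on $x$.

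\textbf{The event $\mathcal G_n$ and balancing.} On $\mathcal G_n$, conditionally on $X$,
\[
\E\bigl[(\widehat\sigma^2_{n,\mathrm{LR}}-\sigma^2)^2\mid X\bigr]
\le \Bigl(\frac1m\sum_{Q} b_Q^2\Bigr)^2+\frac{C'}{m}
\le C^2H_n^{4\beta_b}\varepsilon_n^4+\frac{C'}{c_0n\varepsilon_n^d}.
\]
Taking expectations over $X$ preserves this bound. With $H_n\asymp n^{-1/d}$, the bias term is of order $n^{-4\beta_b/d-4a_\star}$. The choice $a_\star=(d-4\beta_b)/\{d(d+4)\}$ equates the exponents $4\beta_b/d+4a_\star$ and $1-da_\star$, and both equal $4(\beta_b+1)/(d+4)$. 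For the bias term this is checked by $4\beta_b(d+4)+4(d-4\beta_b)=4d(\beta_b+1)$. The result is uniform in $\theta$, since every constant depends only on the model constants.

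\textbf{Main obstacle.} The substantive difficulty lies entirely in \Cref{lem:enough_good_Q}, which we take as given. Within this proof, the only delicate points are the following. First, $\mathbf a_Q^\star$ must be a measurable function of $X$ alone, so that conditional independence and unbiasedness of the noise part hold. Second, the constants of the lemma must be uniform over $\Theta$. We would handle measurability with a measurable selection theorem, or by noting that only the bound in the definition of $\mathcal S_n$ is used, so any measurable near-minimizer in the argmin set suffices.
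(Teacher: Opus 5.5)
Your proof is correct and follows the paper's argument: split on $\mathcal G_n(\varepsilon_n)$, take the conditional bias from the definition of $\mathcal S_n(\varepsilon_n,C)$ and the variance from conditional independence across disjoint cells (this is \Cref{lem:cell-average-risk}), bound the fallback by a constant, and balance. The paper's main proof instead controls $\Pp\{\mathcal G_n(\varepsilon_n)^c\}$ with the simpler Chebyshev bound $C/(n\varepsilon_n^d)$ of \Cref{lem:pe-availability-chebyshev}, but it explicitly records your exponential route through \Cref{lem:enough_good_Q} as equally valid; one harmless slip is that $n\varepsilon_n^d\asymp n^{1-da_\star}=n^{4(\beta_b+1)/(d+4)}$, not $n^{8(\beta_b+1)/(d+4)}$.
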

We next study a numerically feasible estimator.

\subsection{A Numerically Feasible Estimator}\label{sec:est_num_feas}

The intuition behind the success of $\widehat\sigma_{n,\mathrm{LR}}^2$ motivates a fully implementable estimator that uses only the witnesses appearing in the proof of \Cref{lem:enough_good_Q}.

First consider any cell $Q$ with $N_Q=q+2$ and order its elements by increasing sample indices. Fix constants $\kappa>0$ and $v_0>0$, and form $G_Q=\Phi_Q^\top\Phi_Q$. Reject the cell unless $\lambda_{\min}(G_Q/N_Q)\geq\kappa$; on a cell passing this screen, define $P_Q$ and $R_Q$ as in \eqref{eq:cell-projections}. For an unordered pair $e=(i,j)\subset I_Q$, let $d_e=e_r-e_s\in\R^{N_Q}$ when $i=i_r$ and $j=i_s$, and define
\begin{equation}\label{eq:eligible-cell-pairs}
    \mathcal E_Q(\varepsilon_n)
    =\left\{
    (i,j)\subset I_Q:
    \|U_i-U_j\|_\infty\leq\varepsilon_n,
    \ d_e^\top R_Qd_e\geq v_0
    \right\}.
\end{equation}
We restrict to cells $Q$ with $\mathcal E_Q(\varepsilon_n)\neq\varnothing$. As discussed above, we seek an annihilator that mimics the close-pair contrast. To obtain the required uniform bias control, we additionally require 
that $G_Q$ is well conditioned.
Specifically, call $Q$ a good $(\varepsilon_n,\kappa)$-witness if
\begin{equation}\label{eq:feasible-cell-screen}
    N_Q=q+2,
    \qquad
    \lambda_{\min}(G_Q/N_Q)\geq\kappa,
    \qquad
    \mathcal E_Q(\varepsilon_n)\neq\varnothing.
\end{equation}

For each good witness cell, choose the lexicographically first pair $e_Q\in\mathcal E_Q(\varepsilon_n)$. Let $v=d_{e_Q}$ and define the normalized projected contrast
\begin{equation}\label{eq:feasible-witness-contrast}
    \widetilde{\mathbf a}_Q
    =\frac{R_Qd_{e_Q}}
    {\sqrt{d_{e_Q}^\top R_Qd_{e_Q}}}.
\end{equation}
Let $\mathcal T_n(\varepsilon_n)$ be the collection of good $(\varepsilon_n,\kappa)$-witness cells and let $\widetilde{\mathcal G}_n(\varepsilon_n)=\{|\mathcal T_n(\varepsilon_n)|\geq c_0n\varepsilon_n^d\}$. Define
\begin{equation}\label{eq:feasible-lr-estimator}
    \widetilde\sigma_{n,\mathrm{LR}}^2
    =
    \begin{cases}
    \displaystyle
    \frac{1}{|\mathcal T_n(\varepsilon_n)|}
    \sum_{Q\in\mathcal T_n(\varepsilon_n)}
    (\widetilde{\mathbf a}_Q^\top Y_Q)^2,
    &\widetilde{\mathcal G}_n(\varepsilon_n),\\[3mm]
    (\underline v+\overline v)/2,
    &\widetilde{\mathcal G}_n(\varepsilon_n)^c.
    \end{cases}
\end{equation}

\begin{proposition}[Feasible witness estimator]\label{prop:ideal_est_lr2}
Under conditions (i)--(iii), with $\beta_b>1$ and $d>4\beta_b$, the constants $\kappa$, $v_0$, and $c_0$ can be chosen so that, with $\varepsilon_n$ as in \eqref{eq:lr-optimal-bandwidth},
\begin{equation}\label{eq:feasible-estimator-rate}
    \sup_{\theta\in\Theta(\beta_b,\beta_g,d)}
    \E_\theta(\widetilde\sigma_{n,\mathrm{LR}}^2-\sigma^2)^2
    \leq Cn^{-4(\beta_b+1)/(d+4)}.
\end{equation}
\end{proposition}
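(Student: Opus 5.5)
The plan is to decompose the error on the event $\widetilde{\mathcal G}_n(\varepsilon_n)$ into a conditional bias term and a conditional variance term given the design $\mathbf X=(X_1,\ldots,X_n)$. On its complement the fallback error is at most $(\overline v-\underline v)^2$, which multiplies $\Pp\{\widetilde{\mathcal G}_n(\varepsilon_n)^c\}$. The key structural observation is that $\mathcal T_n(\varepsilon_n)$, the pairs $e_Q$, and the contrasts $\widetilde{\mathbf a}_Q$ are all measurable with respect to $\mathbf X$. Distinct cells use disjoint index sets $I_Q$. Hence, conditionally on $\mathbf X$, the summands $Z_Q=(\widetilde{\mathbf a}_Q^\top Y_Q)^2$ are independent across $Q\in\mathcal T_n(\varepsilon_n)$, since the errors are conditionally independent given $\mathbf X$.

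First, the bias step. Because $\|\widetilde{\mathbf a}_Q\|_2=1$, we have $\E(Z_Q\mid\mathbf X)=\sigma^2+(\widetilde{\mathbf a}_Q^\top f_Q)^2$. I would show the witness bound $(\widetilde{\mathbf a}_Q^\top f_Q)^2\le CH_n^{2\beta_b}\varepsilon_n^2$ on every good witness cell; this is the projected close-pair bias bound referred to in the introduction (\Cref{lem:pe-bias bound}). To prove it, expand $f$ in its degree-$\ell$ Taylor polynomial $T$ at $X_{i_{r_1}}$. Since $\Phi_Q^\top\widetilde{\mathbf a}_Q=0$, the polynomial part is annihilated, and only the remainder vector $\rho=f_Q-T_Q$ survives. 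Now write $\widetilde{\mathbf a}_Q^\top\rho=(d_e^\top\rho-d_e^\top P_Q\rho)/\sqrt{v_e}$, using $v_e\ge v_0$. The term $d_e^\top\rho$ equals $-\rho(X_{i_{r_2}})$, whose size is $O(H_n^{\beta_b}\varepsilon_n^{\beta_b})$ or better. For the second term, write $d_e^\top P_Q\rho=(\psi(U_{i_{r_1}})-\psi(U_{i_{r_2}}))^\top G_Q^{-1}\Phi_Q^\top\rho$. The difference of $\psi$ values is $O(\varepsilon_n)$ by Lipschitzness of the basis. The factor $\|G_Q^{-1}\|$ is at most $(\kappa(q+2))^{-1}$ by the Gram screen. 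Finally, $\|\Phi_Q^\top\rho\|\le C H_n^{\beta_b}$ because $\|\rho\|_\infty\le CLH_n^{\beta_b}$ on the cell. Together these give $|\widetilde{\mathbf a}_Q^\top f_Q|\le CH_n^{\beta_b}\varepsilon_n$, with a constant uniform over $\Theta$. So the conditional squared bias of the average is at most $C^2H_n^{4\beta_b}\varepsilon_n^4$.

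Second, the variance step. Conditional independence gives $\Var(\widetilde\sigma^2_{n,\mathrm{LR}}\mid\mathbf X)=|\mathcal T_n|^{-2}\sum_Q\Var(Z_Q\mid\mathbf X)$. Each term is bounded using $\E(Z_Q^2\mid\mathbf X)\le C\bigl(M_4+\overline v^{\,2}+(\widetilde{\mathbf a}_Q^\top f_Q)^4\bigr)$, because $\widetilde{\mathbf a}_Q^\top\varepsilon_Q$ is a unit-norm combination of independent mean-zero errors with bounded fourth moment (Rosenthal, or direct expansion). On $\widetilde{\mathcal G}_n$ the variance is therefore at most $C/(c_0n\varepsilon_n^d)$. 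With $H_n\asymp n^{-1/d}$ and $\varepsilon_n\asymp n^{-a_\star}$, both $H_n^{4\beta_b}\varepsilon_n^4$ and $(n\varepsilon_n^d)^{-1}$ are of order $n^{-4(\beta_b+1)/(d+4)}$, since $4\beta_b/d+4a_\star=1-da_\star$ holds exactly for $a_\star=(d-4\beta_b)/\{d(d+4)\}$.

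Third, the availability step, which I expect to be the main obstacle. We need constants $\kappa,v_0,c_0$ with $\Pp\{|\mathcal T_n(\varepsilon_n)|<c_0n\varepsilon_n^d\}\le c_1\exp(-c_2n\varepsilon_n^d)$; this decays faster than any power of $n$, because $n\varepsilon_n^d$ grows polynomially when $d>4\beta_b$. I would invoke \Cref{lem:pe-availability}, the same Poissonization argument used in \Cref{lem:enough_good_Q}. With $N\sim\mathrm{Poisson}(n)$ points, the cell configurations are independent. For a single cell, $\Pp(N_Q=q+2)$ is bounded below since $nJ_n^{-1}\asymp\lambda_0$ and $p\in[\underline p,\overline p]$. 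Given $N_Q=q+2$, the normalized points have a density bounded above and below on $[0,1]^d$. Choose $\kappa$ and $v_0$ small so that, with probability $\gtrsim\varepsilon_n^d$, some pair lies within $\varepsilon_n$, $\lambda_{\min}(G_Q/N_Q)\ge\kappa$, and $d_e^\top R_Qd_e\ge v_0$. The last condition holds on a positive-probability set of configurations of the remaining $q$ points that is uniformly unisolvent, since then $P_Q$ cannot concentrate on the pair's coordinates. A Chernoff bound on independent Bernoulli indicators then gives $\gtrsim J_n\varepsilon_n^d$ good cells. De-Poissonization proceeds by conditioning on $N=n$, which has probability $\gtrsim n^{-1/2}$; this polynomial factor is absorbed into the exponential. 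Finally, $\sup_\theta\E(\cdot)^2\le\E[\text{bias}^2+\text{var};\widetilde{\mathcal G}_n]+(\overline v-\underline v)^2c_1e^{-c_2n\varepsilon_n^d}$, which gives \eqref{eq:feasible-estimator-rate}, with all constants depending only on the model constants.
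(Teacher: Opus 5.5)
Your proposal is correct and essentially matches the paper's proof. It uses the same conditional bias--variance decomposition over disjoint witness cells, the projected close-pair bias bound of \Cref{lem:pe-bias bound}, and an availability bound for the certificate cells $\mathcal T_n(\varepsilon_n)=\mathcal C_n(\varepsilon_n)$; your exponential route is the alternative the paper records after its main Chebyshev-based argument (\Cref{lem:pe-availability-chebyshev}), and your two small deviations are harmless. Anchoring the Taylor polynomial at $X_{i_{r_1}}$ makes the direct term $O((H_n\varepsilon_n)^{\beta_b})$ without the remainder-gradient estimate the paper needs at the cell center, and naive de-Poissonization (losing a factor $\sqrt n$) suffices because $n\varepsilon_n^d\asymp n^{4(\beta_b+1)/(d+4)}$ grows polynomially, whereas the paper's tilting is needed only for general $t_n$.
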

Before proceeding, we present the core technical argument: bias reduction in good witness cells and the availability of sufficiently many such cells. These facts underlie the proof of \Cref{prop:ideal_est_lr2}. 

\subsection{Core technical ingredients}\label{sec:core-technical}

The ideal and feasible witness estimators rely on two facts: a projected close pair has a reduced conditional bias, and sufficiently many cells contain an eligible projected pair.

The first result is the deterministic reduction in the bias bound.
\begin{lemma}[Projected close-pair bias bound]\label{lem:pe-bias bound}
There is $C<\infty$ such that the following holds for every $t\in(0,1]$. On every cell satisfying $\lambda_{\min}(G_Q/N_Q)\geq\kappa$ and every pair $e=(i,j)$ satisfying $\|U_i-U_j\|_\infty\leq t$ and $v_e=d_e^\top R_Qd_e\geq v_0$,
\begin{equation}\label{eq:pe-bias bound-bound}
\left|
\frac{d_e^\top R_Qf_Q}{\sqrt{v_e}}
\right|
\leq CH_n^{\beta_b}t.
\end{equation}
\end{lemma}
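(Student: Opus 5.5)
We need to bound $|d_e^\top R_Q f_Q|/\sqrt{v_e}$. Since $v_e\ge v_0$, it suffices to show $|d_e^\top R_Q f_Q|\le C H_n^{\beta_b} t$, absorbing $v_0^{-1/2}$ into $C$.

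\textbf{Step 1: Taylor decomposition.} Fix a reference point, say the lower corner $b_Q$ of the cell. Write $f(x)=T(x)+\rho(x)$, where $T$ is the degree-$\ell$ Taylor polynomial of $f$ at $b_Q$. In normalized coordinates, $T(b_Q+A_Qu)$ is a polynomial of total degree at most $\ell$ in $u$, because $A_Q$ is diagonal. It therefore lies in the span of $\psi_1,\ldots,\psi_q$, so the vector $(T(X_{i_r}))_r$ lies in the column space of $\Phi_Q$ and is annihilated by $R_Q$. Hence
\[
d_e^\top R_Q f_Q=d_e^\top R_Q\rho_Q,
\]
with $\rho_Q=(\rho(X_{i_r}))_r$.

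\textbf{Step 2: split via the projection.} Expand
\[
d_e^\top R_Q\rho_Q=d_e^\top\rho_Q-d_e^\top P_Q\rho_Q=\{\rho(X_i)-\rho(X_j)\}-\{\psi(U_i)-\psi(U_j)\}^\top G_Q^{-1}\Phi_Q^\top\rho_Q .
\]
For the first term, write $\rho(x)=f(x)-T(x)$. On the cell, $\|\nabla\rho(x)\|\le C\|x-b_Q\|^{\beta_b-1}\le CH_n^{\beta_b-1}$. This follows from the Hölder condition on the order-$\ell$ derivatives, applied to the Taylor remainder of $\nabla f$ of order $\ell-1$; it uses $\beta_b>1$, so that $\ell\ge1$ and $\nabla f$ is $C^{\beta_b-1}$. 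The mean value theorem along the segment (the cell is convex) then gives
\[
|\rho(X_i)-\rho(X_j)|\le CH_n^{\beta_b-1}\|X_i-X_j\|\le CH_n^{\beta_b-1}\cdot C_H H_n t=CH_n^{\beta_b}t,
\]
because $\|X_i-X_j\|\le \|A_Q\|\,\sqrt d\,\|U_i-U_j\|_\infty\le C H_n t$.

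For the second term, $\|\psi(U_i)-\psi(U_j)\|\le C t$ by the Lipschitz property of the fixed polynomial basis on $[0,1]^d$. Next, $\|G_Q^{-1}\|_{\op}\le (\kappa N_Q)^{-1}$ by the screen. Also $\|\Phi_Q^\top\rho_Q\|\le \sum_r\|\psi(U_{i_r})\|\,|\rho(X_{i_r})|\le CN_Q H_n^{\beta_b}$, using the standard remainder bound $|\rho(x)|\le C\|x-b_Q\|^{\beta_b}\le CH_n^{\beta_b}$ on the cell. Multiplying, the second term is at most $C t\cdot(\kappa N_Q)^{-1}\cdot CN_QH_n^{\beta_b}=CH_n^{\beta_b}t$. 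The factors of $N_Q$ cancel, so the bound does not require $N_Q=q+2$.

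\textbf{Step 3: conclude.} Adding the two terms and dividing by $\sqrt{v_e}\ge\sqrt{v_0}$ gives \eqref{eq:pe-bias bound-bound}. The constant depends only on $L,d,\beta_b,\kappa,v_0,C_H$ and the basis, uniformly over $f\in\cH^{\beta_b}(L;U)$ and $t\in(0,1]$.

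The main obstacle is the gradient bound on the remainder in Step 2. This is where the extra factor $t$ is gained beyond the generic $H_n^{\beta_b}$ bound. It must be stated carefully for non-integer $\beta_b$ and for the integer convention $C^{s}=C^{s-1,1}$. I would prove it by applying the multivariate Taylor remainder to each $\partial_k f$. Note that $\partial_k T$ is the degree-$(\ell-1)$ Taylor polynomial of $\partial_k f$, and the order-$(\ell-1)$ derivatives of $\partial_k f$ are $\alpha$-Hölder with constant $L$. This gives $|\partial_k\rho(x)|\le C L\|x-b_Q\|^{\ell-1+\alpha}$, which settles the bound since $\ell-1+\alpha=\beta_b-1$.
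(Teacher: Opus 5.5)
Your proof is correct and follows essentially the same argument as the paper's. Both annihilate the degree-$\ell$ Taylor polynomial with $R_Q$, split $d_e^\top R_Q\mathbf r_Q$ into the remainder difference and the term $\Delta\psi_e^\top G_Q^{-1}\Phi_Q^\top\mathbf r_Q$, and bound each by $CH_n^{\beta_b}t$ using the gradient bound $O(H_n^{\beta_b-1})$ on the remainder, the Gram screen, and the Lipschitz basis. The only differences are cosmetic: you expand at the corner $b_Q$ rather than the center $x_Q$, and you apply the mean value theorem in physical coordinates rather than stating a Lipschitz bound for $r_Q$ in normalized coordinates.
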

For the feasible witness estimator, take $t=\varepsilon_n$ above.
Next, 
to argue that sufficiently many good cells are available,
let $\mathcal C_n(h)$ be the collection of cells $Q\in\cQ_n$ satisfying
\begin{equation}\label{eq:certificate-cell-definition}
N_Q=q+2,
\qquad
\lambda_{\min}(G_Q/N_Q)\geq\kappa,
\qquad
\mathcal E_Q(h)\neq\varnothing,
\end{equation}
where $\mathcal E_Q(h)$ is defined as in \eqref{eq:eligible-cell-pairs} with $h$ in place of $\varepsilon_n$. We have the following.

\begin{lemma}[Availability of projected pairs]\label{lem:pe-availability}
The constants $\kappa$ and $v_0$ can be chosen so that, whenever $t_n\to0$ and $nt_n^d\to\infty$,
\begin{equation}\label{eq:pe-availability}
\Pp\left\{
|\mathcal C_n(t_n)|<cnt_n^d
\right\}
\leq C\exp(-cnt_n^d)
\end{equation}
for all sufficiently large $n$ and constants $c,C>0$.
\end{lemma}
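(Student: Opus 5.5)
Our plan is to Poissonize, show that each cell is a certificate cell with probability of order $t_n^d$ independently across cells, apply a Chernoff bound, and then de-Poissonize.

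\emph{Choice of constants.} Fix a reference configuration of $q+2$ points in $[0,1]^d$ whose first two points coincide, say $u_1=u_2=u^\circ$ with $u^\circ$ in the interior, and whose remaining points $u_3,\ldots,u_{q+2}$ are unisolvent for $\mathcal P_{\leq\ell}$ (for instance, a principal lattice). For this configuration $G=\sum_r\psi(u_r)\psi(u_r)^\top$ is positive definite. Since $d_e=e_1-e_2$ is orthogonal to every column of $\Phi$ (the first two rows of $\Phi$ agree), we get $R d_e=d_e$ and $d_e^\top Rd_e=2$. By continuity, there is a radius $\rho>0$ such that for every configuration with $\|u_r-u_r^\circ\|_\infty\leq\rho$ for $r\ge3$ and $u_1,u_2\in B_\infty(u^\circ,\rho)$, we have $\lambda_{\min}(G/(q+2))\geq 2\kappa$ and $d_e^\top Rd_e\geq 2-\delta\geq 2v_0$. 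Here we take $\kappa$ and $v_0$ to be half of the reference values. The continuity is uniform because $\psi$ is Lipschitz and $G^{-1}$ is continuous on the compact set of well-conditioned Gram matrices. Let $\mathcal A(h)\subset([0,1]^d)^{q+2}$ be the set of normalized configurations in which some permutation lies in this neighborhood and, in addition, $\|u_1-u_2\|_\infty\leq h$. Its Lebesgue measure is at least $c\,h^d$ once $h\leq\rho$: the point $u_1$ ranges over a fixed ball, $u_2$ lies in the $h$-box around $u_1$, and the others range over fixed balls. For such configurations the lexicographic pair ordering is irrelevant, since we only need $\mathcal E_Q(h)\neq\varnothing$.

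\emph{Poissonized count.} Replace $n$ by $N\sim\mathrm{Poisson}(n/2)$. The cell counts are then independent Poisson variables with means $\mu_Q=\tfrac n2\int_Qp\in[c,C]$, because $J_n\asymp n$ and $\underline p\leq p\leq\overline p$. Conditional on $N_Q=q+2$, the normalized points are i.i.d.\ with density bounded below by $\underline p/\overline p$ on $[0,1]^d$. Hence $\Pp(Q\in\mathcal C_n(t_n))\geq \Pp(N_Q=q+2)\,(\underline p/\overline p)^{q+2}\,|\mathcal A(t_n)|\geq c' t_n^d$. These cell indicators are independent Bernoulli variables, so their sum $S$ has mean at least $c'J_nt_n^d\asymp nt_n^d$. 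The multiplicative Chernoff bound then gives $\Pp(S<\tfrac12\E S)\leq\exp(-c\,nt_n^d)$. Note that no smoothness of $p$ is used, which is consistent with the claim holding for every $\beta_g>0$.

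\emph{De-Poissonization.} This is the main obstacle, because the certificate property is not monotone in the sample: adding points can push $N_Q$ above $q+2$. We plan to use a coupling. Let the original sample be $X_1,\ldots,X_n$, and let the Poisson sample consist of its first $\min(N,n)$ points. On the event $\{N\leq n\}$, whose complement has probability $\leq e^{-cn}$, the full sample is the Poisson sample plus $n-N$ extra i.i.d.\ points, and $n-N\leq Cn$ with high probability. A cell that is a certificate in the Poisson sample remains a certificate unless it receives an extra point. We therefore work with the set of Poisson-certificate cells that receive none of the $n-N$ extra points. Conditional on the Poisson sample and on $n-N=m\leq n$, each such cell independently avoids all extra points with probability at least $(1-\overline p/J_n)^m\geq c''>0$. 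The joint avoidance events are not independent; a cleaner route is to bound the number of cells hit by the extras. The expected number of certificate cells that are hit is at most a constant fraction $1-c''$ of $S$, and the hits are negatively associated (balls into bins), so a Chernoff bound for negatively associated indicators shows that at least $c''S/2$ certificate cells survive, outside an event of probability $\exp(-cS)$. Combining the three exponential tail bounds yields \eqref{eq:pe-availability} for all large $n$, since $nt_n^d\leq n$.

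Alternatively, the de-Poissonization can be done directly: condition on $N=n$ and use the standard bound $\Pp(A\mid N=n)\leq e\sqrt{n}\,\Pp(A)$ with intensity $n$. The polynomial factor $\sqrt n$ is absorbed into the exponent because $nt_n^d\to\infty$; more precisely, the prefactor is harmless provided $nt_n^d\gtrsim\log n$. In the remaining regime we would fall back on the coupling argument above, so we regard the coupling as the primary route.
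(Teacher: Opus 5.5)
Your proposal is correct, and it differs from the paper's proof mainly in how you de-Poissonize.

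\textbf{Where you match the paper.} Your certificate construction and Poissonized Chernoff step are essentially the paper's. The paper takes $q$ points in disjoint boxes $B_1,\dots,B_q$ on which the basis matrix has $\sigma_{\min}\geq\eta$, plus two points within $h/2$ of each other in a separate box $B_0$. It then checks the screens explicitly via $G_Q\succeq V_Q^\top V_Q$ and $v_e=2-\Delta\psi^\top G_Q^{-1}\Delta\psi\geq 2-L_\psi^2dh^2/(4\eta^2)$, giving $\kappa=\eta^2/\{2(q+2)\}$ and $v_0=1$. This is the same perturbation argument as your degenerate reference configuration with $u_1=u_2$. The paper Poissonizes with intensity $n$ rather than $n/2$, gets $p_Q(h)\geq ch^d$, and applies Chernoff to the independent cell indicators, as you do.

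\textbf{Where you differ: de-Poissonization.} The paper conditions on $K=n$ and cancels the $\sqrt n$ loss by exponential tilting. It bounds $\Pp^\circ\{S_h<s_h,K=n\}\leq e^{ts_h}L_t\,\Pp_t(K=n)$ with $L_t\leq\exp\{-(1-e^{-t})\mu_h\}$. The key input is $\Pp_t(K=n)\leq C/\sqrt n$. Because certification forces $K_Q\geq3$, the atoms at $0$ and $1$ keep probability bounded below under the tilt. Each tilted cell count is therefore a mixture with a fair Bernoulli component, and a binomial local bound gives the $C/\sqrt n$ estimate, which cancels $\Pp^\circ(K=n)\geq c/\sqrt n$.

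You instead embed a Poisson$(n/2)$ sample in the fixed sample. You then control certificates destroyed by the $n-N$ extra points using negative association of balls-into-bins emptiness indicators (Dubhashi--Ranjan, which allows unequal bin probabilities) and a Chernoff bound for NA sums. Both routes give $C\exp(-cnt_n^d)$ for every $t_n$ with $nt_n^d\to\infty$.

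\textbf{What each route buys.} The paper's argument is self-contained, using only Stirling, Chebyshev and independence. It is a generic way to move lower-tail bounds for sums of independent local indicators to the fixed-$n$ sample. Yours handles the non-monotonicity of the certificate event transparently and avoids the local-limit computation, at the cost of importing NA machinery. The NA step is essential: a bounded-differences bound over the $\asymp n$ extra points would give only $\exp(-cnt_n^{2d})$.

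\textbf{Small points.} Build the coupling from an infinite iid sequence, with Poisson sample $X_1,\dots,X_N$, so that it is exactly Poisson even on $\{N>n\}$. Your remark that naive conditioning on $N=n$ costs $\sqrt n$, and so needs $nt_n^d\gtrsim\log n$, is right; the paper makes the same observation, which is why it introduces the tilt.
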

 Although the result above demonstrates exponentially decaying probability of having fewer than a fixed positive multiple of $nt_n^d$ cells, for the proofs of \Cref{prop:ideal_est_lr,prop:ideal_est_lr2} a weaker result with a simpler proof suffices. We present it in Section \ref{sec:availability-chebyshev} and use it in the risk proofs below, while also recording the alternative based on the exponential bound.

\section{Related work}\label{sec:related}

{\bf Difference-based, residual-based, and matched-pair estimators.}
Difference-based variance estimators have a long history; see, among others,
\citet{von1941mean},
\citet{Rice1984}, \citet{HallKayTitterington1990}, \citet{BrownLevine2007}, and \citet{WangEtAl2008}. Under a fixed design, smoothness of the regression function controls the difficulty of variance estimation; see \citet{MunkEtAl2005} and \citet{CaiLevineWang2009}. These procedures use finite differences chosen to annihilate low-order polynomial trends along a deterministic design.

Residual-based procedures form a second classical family. \citet{HallMarron1990} estimate variance from squared nonparametric regression residuals with bias or degrees-of-freedom correction, and \citet{LiLin2020} develop a semiparametrically efficient residual-based construction. 

A third family uses matched covariates or pairwise extrapolation. \citet{MullerSchickWefelmeyer2003} proposed a covariate-matched $U$-statistic whose weights use a kernel estimate of the design density, and \citet{DuSchick2009} developed a related covariate-matched estimator. Nearest-neighbor graph constructions were studied by \citet{LiitiainenEtAl2010}. \citet{TongWang2005} regress squared response differences on squared covariate distances and estimate the variance by extrapolating to zero distance; \citet{TongMaWang2013} give a further difference-based optimality analysis. \citet{ShenEtAl2020} establish sharp random-design results for a kernel-weighted pairwise-difference ratio and related local polynomial variance-function estimators. Their results settle the one-dimensional minimax rate, while the multivariate problem remains more delicate.

The appendix extension in \Cref{sec:est_num_stable} combines these ideas differently. Like the core  estimator, it first imposes exact polynomial annihilation on the realized design by an empirical projection. It then reuses all eligible fine-scale pair contrasts and performs a stable distance extrapolation.

{\bf Random design and smoothness above one.}
\citet{RobinsEtAl2008} obtained the rate $n^{-8\beta_b/(d+4\beta_b)}$ in the low-smoothness regime $0<\beta_b<1$. \citet{ShenEtAl2020} developed sharp random-design procedures based on pairwise differences and local polynomial smoothing; their multivariate homoskedastic upper bound is proved when every coordinatewise smoothness exponent is at most one. \citet{AronowLopatto2026} show that the Robins rate is not uniformly attainable for $\beta_b>1$ and $d>4\beta_b$ when the design density is subject only to upper and lower bounds. Their result does not by itself establish the same impossibility on each fixed H\"older-density subclass considered here. 

{\bf Higher-order influence functions and smooth design densities.}
The expected conditional covariance framework of \citet{RobinsEtAl2008} includes the present target as a special case. Taking $A=Y$, setting the two conditional mean nuisances equal to $f$, and taking $H_1=1$, $H_2=H_3=-Y$, and $H_4=Y^2$ makes their target equal to $\E\{\operatorname{var}(Y\mid X)\}=\sigma^2$. Under this specialization, their smoothness condition reduces to $\beta_g>\beta_g^\star$, with $\beta_g^\star$ given in \eqref{eq:robins-density-threshold}. Their finite-order estimator supplies the comparison underlying the first branch of \Cref{thm:main}, subject to the additional uniform moment condition \eqref{eq:hoif-uniform-moment-condition}. 

{\bf Double-cross-fit doubly robust estimation.}
A double-cross-fit doubly robust estimator for expected conditional covariance, introduced by \citet{newey2018cross} and studied in a rough non-$\sqrt n$ regime by \citet{mcclean2026double}, can also be specialized by taking both variables to be $Y$. 
Its existing analyses are formulated under a different loss and nuisance parameter setup from our paper, but we compare with it in our experiments nonetheless.

\section{Numerical experiments}\label{sec:numerical-rate}

\subsection{Compared estimators}

We compare the two estimators discussed in the paper with five baselines, for seven methods in total.
\begin{enumerate}[label=(\roman*),leftmargin=2.4em]
\item \emph{Feasible witness estimator.} 
This is the estimator from \eqref{eq:feasible-lr-estimator}: among cells containing $q+2$ observations, it retains the lexicographically first eligible projected pair, normalizes its contrast, and averages one squared contrast per accepted cell with equal weights. 
\item \emph{Two-scale estimator.} 
This is the extension from \Cref{sec:est_num_stable}. It  reuses all eligible projected pairs, and estimates the zero-distance intercept using ridge weights before clipping.
\item \emph{Newey--Robins DCDR.} We specialize the double-cross-fit doubly robust expected conditional covariance estimator of \citet{newey2018cross,mcclean2026double} to $A=Y$. We split the observations into three folds by sample label and cycle the folds. 
For each evaluation fold, two separate cellwise local-affine least-squares regressions are trained on the other two folds, and we average the products of the two residuals over the evaluation observations.
\item \emph{Close pairs.} We form disjoint consecutive pairs within cells, retain pairs whose normalized $\ell_\infty$ distance is bounded by a specified level, and average $(Y_i-Y_j)^2/2$.
\item \emph{All-pairs kernel $U$-statistic.} Motivated by \citet{MullerSchickWefelmeyer2003,DuSchick2009,ShenEtAl2020}, we average $(Y_i-Y_j)^2/2$ over all same-cell pairs whose normalized $\ell_\infty$ separation is at most a given value.
\item \emph{Tong--Wang extrapolation.} Following \citet{TongWang2005}, we form all same-cell pairs with normalized $\ell_\infty$ separation at most $0.5$, regress $(Y_i-Y_j)^2/2$ on $\|X_i-X_j\|_2^2$, and use the fitted intercept. 

\item \emph{Cellwise local-affine residuals.} In each cell containing more than $d+1$ observations, we fit an affine regression by ordinary least squares, pool the residual sums of squares, and divide by the total residual degrees of freedom. 
\end{enumerate}

\subsection{Hyperparameter selection}\label{subsec:expanded-tuning}

Throughout this section, write $\beta=\beta_b$. We perform a pilot study to select hyperparameters for all methods and use the selected values in all three experiments. The feasible witness estimator and the two-scale estimator are tuned separately.
For the pilot, we use
$d\in\{6,8,10\}$ and $\beta\in\{1.1,1.5,2\}$ with
a variety of designs, regression functions, and 
signal-to-noise ratios. 
We select hyperparameters in a way that
balances average relative performance and upper-tail performance.
See \Cref{tab:expanded-defaults}
for the searched values and selected defaults; the details are available in the associated code repository.

For the feasible witness estimator, the pilot calibration selects target cell occupancy $8$, bandwidth $\varepsilon_n=\min\{1,0.7n^{-a_\star}\}$, Gram threshold $\kappa=0.001$, and contrast-variance threshold $v_0=0.4$; the good-event fraction is fixed at $c_0=0.01$. Target occupancy controls the partition resolution, while retained witness cells continue to have exactly $q+2$ observations as required by \eqref{eq:feasible-cell-screen}. The separately selected defaults for the stable two-scale extension are reported in \Cref{tab:expanded-defaults}.

\begin{table}
\centering
\small
\begin{tabular}{lll}
\toprule
Hyperparameter & Values represented in the search & Selected default \\
\midrule
Target cell occupancy & $\{64,96,128,192,256\}$ & 64 \\
Close-pair constant $c_\rho$ & $\{0.30,0.45,0.60,0.80,1.00\}$ & 0.8 \\
Bandwidth multiplier & $\{1,1.5,2,2.5,3\}$ & 1.5 \\
Maximum normalized radius & $\{0.35,0.50,0.70,0.90\}$ & 0.5 \\
Gram threshold $\kappa$ & $\{0.01,0.03,0.05,0.08,0.12\}$ & 0.05 \\
Maximum cell occupancy $N_{\max}$ & occupancy $\times\{1.5,2,3,4\}$ & 192 \\
Contrast-variance threshold $v_0$ & $\{0.25,0.40,0.50,0.65,0.80\}$ & 0.8 \\
Distance ridge $\lambda$ & $\{0.001,0.003,0.01,0.03,0.10\}$ & 0.003 \\
\bottomrule
\end{tabular}
\caption{Selected hyperparameters for the stable all-pairs two-scale estimator.}
\label{tab:expanded-defaults}
\end{table}

Similarly,
 the hyperparameters of 
each baseline are tuned on the same training designs and then frozen.  Close pairs use a $5\times5$ grid over occupancy and $c_\rho$; the kernel estimator uses a $5\times5$ grid over occupancy and bandwidth constant; Tong--Wang uses a $5\times4$ grid over occupancy and normalized radius; and local polynomial residuals and DCDR use seven occupancy values each.
The selected settings are in \Cref{tab:baseline-defaults}. These numerical thresholds are empirical choices; the rate guarantees apply to the admissible fixed constants constructed in the proofs, not to every possible tuning choice.

\begin{table}
\centering
\small
\begin{tabular}{lrrl}
\toprule
Comparator & Target occupancy & Method-specific parameter & Training score \\
\midrule
Close pairs & 64 & $c_\rho=0.8$ & 3.37 \\
Kernel $U$-statistic & 64 & $c_h=0.5$ & 1.99 \\
Tong--Wang & 192 & radius $=0.5$ & 3.83 \\
Local polynomial & 64 & -- & 1.07 \\
DCDR & 192 & -- & 1.24 \\
\bottomrule
\end{tabular}
\caption{Hyperparameters selected for the baselines.}
\label{tab:baseline-defaults}
\end{table}

\subsection{Monte Carlo protocol}

In the two simulation experiments, every design replication samples $n$ iid covariates; each empirical design replication instead draws a subsample without replacement from the 
full dataset. Apart from deterministic fallbacks used when screening leaves too few eligible objects, conditional on the sampled covariates each quadratic estimator can be written as $Y^\top A_XY$ with $A_X=A_X^\top$ and $\operatorname{tr}(A_X)=1$. Writing $f=(f(X_1),\ldots,f(X_n))^\top$, under iid Gaussian errors its exact conditional mean squared error is
\begin{equation}\label{eq:quadratic-risk-simulation}
\E\left[\left\{Y^\top A_XY-\sigma^2\right\}^2\mathrel{\big|}X\right]
=
\left(f^\top A_Xf\right)^2
+2\sigma^4\operatorname{tr}(A_X^2)
+4\sigma^2f^\top A_X^2f.
\end{equation}
For weighted pairwise estimators, 
we evaluate 
the trace and quadratic terms 
numerically, 
retaining all dependence induced by overlapping pairs. 
On the good event, the matrix of the feasible witness estimator is the direct sum of the equal-weight rank-one blocks $|\mathcal T_n(\varepsilon_n)|^{-1}\widetilde{\mathbf a}_Q\widetilde{\mathbf a}_Q^\top$ over disjoint witness cells. If its good event fails, we instead evaluate the exact squared error of the deterministic midpoint fallback in \eqref{eq:feasible-lr-estimator}. This estimator is left unclipped. 

For the two-scale estimator, $A_X$ is the matrix in \eqref{eq:pe-quadratic-form}. For DCDR, $A_X$ is the symmetrized quadratic form generated by the cyclic products of residual operators from the two separately trained nuisance fits. For the local-affine estimator, $A_X=R_X/D_X$, where $R_X$ is the block-diagonal residual projection and $D_X=\operatorname{tr}(R_X)$.

We evaluate \eqref{eq:quadratic-risk-simulation} and average it over independent designs or empirical subsamples. For the two-scale estimator, the reported exact risk is that of the quadratic estimator before clipping and therefore upper bounds the risk of its clipped implementation.
For continuity across all summaries, relative MSE is normalized within each setting by the MSE of the two-scale estimator.

\subsection{Evaluating the rate}

Our first experiment examines the finite-sample risk decay of both proposed estimators.
We use $d=8$ and $\beta=3/2$. 
Thus 
$\ell=1$, $q_{d,\ell}=9$, and the theoretical mean squared-error upper-bound exponent for both the feasible witness estimator in \Cref{prop:ideal_est_lr2} and the stable extension in \Cref{prop:stable-est-lr} is
$4(\beta+1)/(d+4)=5/6$.
The regular grid
exponent is $4\beta/d=3/4$, while the close pair exponent is $8/(d+4)=2/3$.
The design is uniform on $[0,1]^8$, the errors are iid $\mathcal{N}(0,10^{-2})$, and the regression functions form the triangular array over $n\ge1$,
$$
f_n(x)
=
3\left(v^\top x-\sqrt8/2\right)
+H_n^{3/2}\omega^{-3/2}
\cos\left(
\omega\left(v^\top x-\sqrt8/2\right)/H_n+0.37
\right),
$$
where $v=8^{-1/2}(1,\ldots,1)^\top$, $\omega=1.055$, and $H_n=J_n^{-1/8}$ is the common reference cell scale used to define the triangular array, rather than an estimator-specific scale.
The oscillatory amplitude keeps the $C^{1,1/2}$ norm bounded uniformly in $n$. The local affine component is relatively large, making close-pair differences biased.
Due to computational cost, 
the numbers of retained design replications are $100$, $60$, $18$, $5$, and $5$ at $n\in\{8192,32768,131072,524288,10^6\}$. 

\begin{figure}
\centering
\includegraphics[width=\textwidth]{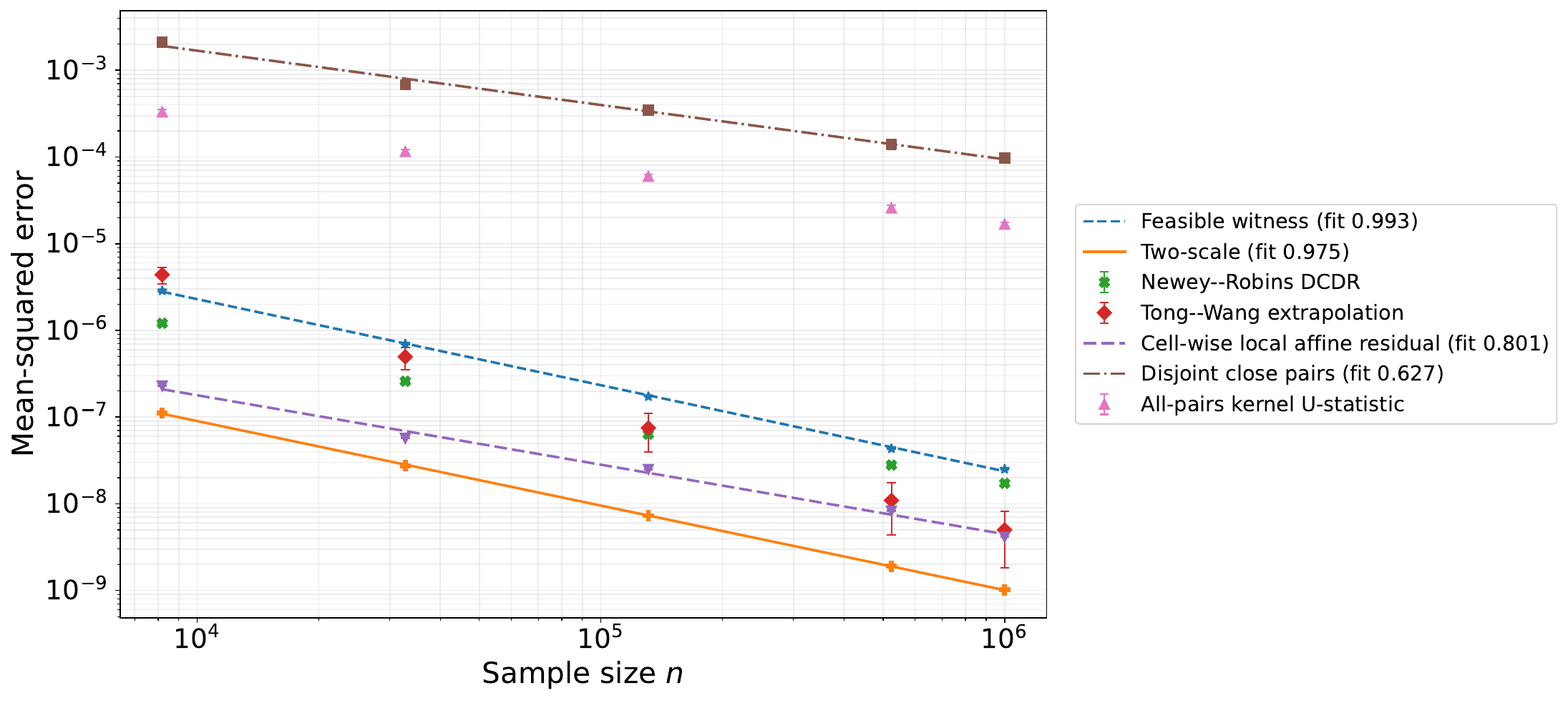}
\caption{Design-averaged exact conditional Gaussian mean squared error for the feasible witness estimator, the two-scale estimator, and five baselines, with the two-scale estimator evaluated before clipping and error bars equal to two design Monte Carlo standard errors. Ordinary least squares power-law fits to $\log(\mathrm{MSE})$ versus $\log n$ over all five sample sizes are shown for the feasible witness estimator, the two-scale estimator, the cellwise local-affine residual estimator, and disjoint close pairs. Their fitted exponents are $0.993$, $0.975$, $0.801$, and $0.627$, respectively.}
\label{fig:numerical-rate}
\end{figure}

The MSE of the feasible witness estimator decreases from $2.87\times10^{-6}$ at $n=8192$ to $2.51\times10^{-8}$ at $n=10^6$, with fitted exponent $0.993$. The MSE of the two-scale estimator decreases from $1.12\times10^{-7}$ to $1.02\times10^{-9}$ over the same range, with fitted exponent $0.975$.
The Newey--Robins DCDR MSE decreases from $1.21\times10^{-6}$ to $1.72\times10^{-8}$, with fitted exponent $0.873$. The cellwise local-affine residual MSE decreases from $2.28\times10^{-7}$ to $4.12\times10^{-9}$, with fitted exponent $0.801$. The fitted exponents for the proposed methods exceed $5/6$; this is compatible with a worst-case upper bound and does not establish that the bound is sharp for this uniform-design triangular array. 

The feasible witness estimator uses an average of $72.65$ disjoint witness cells at $n=8192$ and $8{,}045.8$ at $n=10^6$, with no fallback in any retained rate-experiment replication. At $n=10^6$, the two-scale estimator uses an average of approximately $3.05$ million close residual pairs. After accounting exactly for their dependence, 
the mean effective rank 
of its quadratic form 
is approximately $2.36\times10^5$. 
The fixed ridge keeps the intercept weights stable: over the retained sizes, the design-averaged $\ell_1$ norm is about $2.42$, and the design-averaged value of $M_n\max_e|w_e|$ is at most about $14.8$.


\subsection{Varying the feature density and regression function}\label{subsec:expanded-gaussian}

Next, 
we report experiments with a variety of feature densities and regression functions. 
We evaluate four design distributions with support $[0,1]^d$ and densities bounded above and away from zero for fixed $d$.  Besides the uniform density, we use
\begin{align*}
p_{\mathrm{tilt}}(x)
&=1+0.6(2x_1-1),\qquad
p_{\mathrm{check}}(x)
=1+0.55(-1)^{\sum_{j=1}^{\min(3,d)}\lfloor2x_j\rfloor},\\
p_{\mathrm{cluster}}(x)
&=0.35+\frac{0.65}{2}\left\{\prod_{j=1}^d b_{2,5}(x_j)+\prod_{j=1}^d b_{5,2}(x_j)\right\},
\end{align*}
where $b_{a,b}$ is the beta density.  The tilted design tests a smooth nonuniform marginal, the checkerboard design has a discontinuous density, and the clustered mixture has two dependent latent clusters. The checkerboard density is outside every positive-smoothness H\"older class in condition (i), but is covered by the density-bounded extension of the two-scale results noted after \Cref{thm:main}.

\paragraph{Regression functions.}
Let $v=d^{-1/2}(1,\ldots,1)^\top$, $t=v^\top x-\sqrt d/2$, $z_j=x_j-1/2$, $r=\min(4,d)$, and $H=J_n^{-1/d}$ be the cell scale.  
We evaluate a variety of globally smooth, irregular, and cell-scale regression functions:
\begin{align*}
f_{\mathrm{aff}}(x)&=1.8t,\qquad
f_{\mathrm{add}}(x)=d^{-1/2}\sum_{j=1}^d\sin(2\pi\nu_jx_j),\qquad \nu_j=1+((j-1)\bmod3),\\
f_{\mathrm{ridge}}(x)&=1.2\sin(2\pi t/\sqrt d)+0.6t^2,\qquad
f_{\mathrm{int}}(x)=\frac{4}{r-1}\sum_{j=1}^{r-1}z_jz_{j+1}+0.5\sin(2\pi t),\\
f_{\mathrm{cusp}}(x)&=2\operatorname{sign}(t)|t|^\beta+0.45t,\qquad
f_{\mathrm{osc}}(x)=\sin(2\pi c x_1)+0.55\cos(2\pi c x_2),\\
f_{\mathrm{tri},H}(x)&=2.5t+H^\beta\omega^{-\beta}\cos(\omega t/H+0.37),
\end{align*}
where $c=4$ for $d\leq8$, $c=3$ for $d=10$, and $\omega=1.055$.  These functions are rescaled on the realized design so that $n^{-1}\sum_i f(X_i)^2/\sigma^2$ equals the stated SNR. This design-dependent rescaling is used for finite-sample comparison: \eqref{eq:quadratic-risk-simulation} remains exact conditional on the design, but the resulting ensemble is not an iid experiment with a single fixed regression function.

We also include two regression functions that
place nonpolynomial variation at the cell scale:
\begin{align*}
f_{\mathrm{stress},1,H}(x)
&=3t+1.75H^\beta\omega^{-\beta}\cos(\omega t/H+0.37),\\
f_{\mathrm{stress},2,H}(x)
&=1.8t+\frac{1.35H^\beta}{\sqrt r}
\sum_{j=1}^r\omega_j^{-\beta}\cos(\omega_jz_j/H+\phi_j),
\end{align*}
where $\omega_j=1+0.15(j-1)$ and $\phi_j=0.31+0.47(j-1)$.  
These two stress functions are not rescaled to a fixed SNR, because doing so would obscure the intended uniform H\"older scaling of the cell-scale perturbation.

\begin{table}
\centering
\scriptsize
\begin{tabular}{lrrllll}
\toprule
ID & $d$ & $\beta$ & Design & Mean & SNR & $\sigma^2$ \\
\midrule
S1 & 6 & 1.1 & Uniform & Affine & 4 & 0.01 \\
S2 & 6 & 1.1 & Tilted & Cusp & 4 & 0.01 \\
S3 & 6 & 1.1 & Checkerboard & Oscillatory & 8 & 0.01 \\
S4 & 8 & 1.5 & Uniform & Triangular & 4 & 0.01 \\
S5 & 8 & 1.5 & Tilted & Additive & 4 & 0.01 \\
S6 & 8 & 1.5 & Checkerboard & Cusp & 8 & 0.01 \\
S7 & 8 & 1.5 & Clustered & Interaction & 0.5 & 0.10 \\
S8 & 10 & 2.0 & Uniform & Ridge & 4 & 0.01 \\
S9 & 10 & 2.0 & Tilted & Triangular & 8 & 0.01 \\
S10 & 10 & 2.0 & Clustered & Cusp & 0.5 & 0.10 \\
H1 & 6 & 1.1 & Uniform & $f_{\mathrm{stress},1,H}$ & -- & 0.01 \\
H2 & 6 & 1.1 & Checkerboard & $f_{\mathrm{stress},2,H}$ & -- & 0.01 \\
H3 & 8 & 1.5 & Tilted & $f_{\mathrm{stress},1,H}$ & -- & 0.01 \\
H4 & 8 & 1.5 & Clustered & $f_{\mathrm{stress},2,H}$ & -- & 0.01 \\
H5 & 10 & 2.0 & Checkerboard & $f_{\mathrm{stress},1,H}$ & -- & 0.01 \\
H6 & 10 & 2.0 & Clustered & $f_{\mathrm{stress},2,H}$ & -- & 0.01 \\
\bottomrule
\end{tabular}
\caption{Simulation settings.}
\label{tab:expanded-scenarios}
\end{table}

\paragraph{Monte Carlo sizes and summary measure.}
For scenarios S1--S10, the numbers of independent design replications at $n=4096,8192,16384$ are $24$, $16$, and $10$; for scenarios H1--H6 they are $16$, $10$, and $6$.  This gives 692 design replications and 4,844 exact method risks across seven methods. 
For each method, we pool its 48 ratios into one comparison and report their geometric mean and the number of settings in which the method has the smallest MSE.

\begin{table}
\centering
\small
\begin{tabular}{lrr}
\toprule
Method & Geo. mean rel. MSE & Wins \\
\midrule
Feasible witness estimator & 30.81 & 1/48 \\
Two-scale estimator & 1.00 & 15/48 \\
Local polynomial & 1.08 & 11/48 \\
Tong--Wang & 5.21 & 4/48 \\
Kernel $U$-statistic & 36.94 & 5/48 \\
DCDR & 2.00 & 12/48 \\
Close pairs & 230.19 & 0/48 \\
\bottomrule
\end{tabular}
\caption{Aggregate Gaussian exact-risk results pooled across all 48 scenario--sample-size settings. Relative MSE is normalized by the two-scale estimator separately in each setting; values below one favor the displayed method over the two-scale estimator. A win denotes the smallest design-averaged MSE in a setting.}
\label{tab:expanded-simulation-summary}
\end{table}

\begin{figure}
\centering
\includegraphics[width=0.7\textwidth]{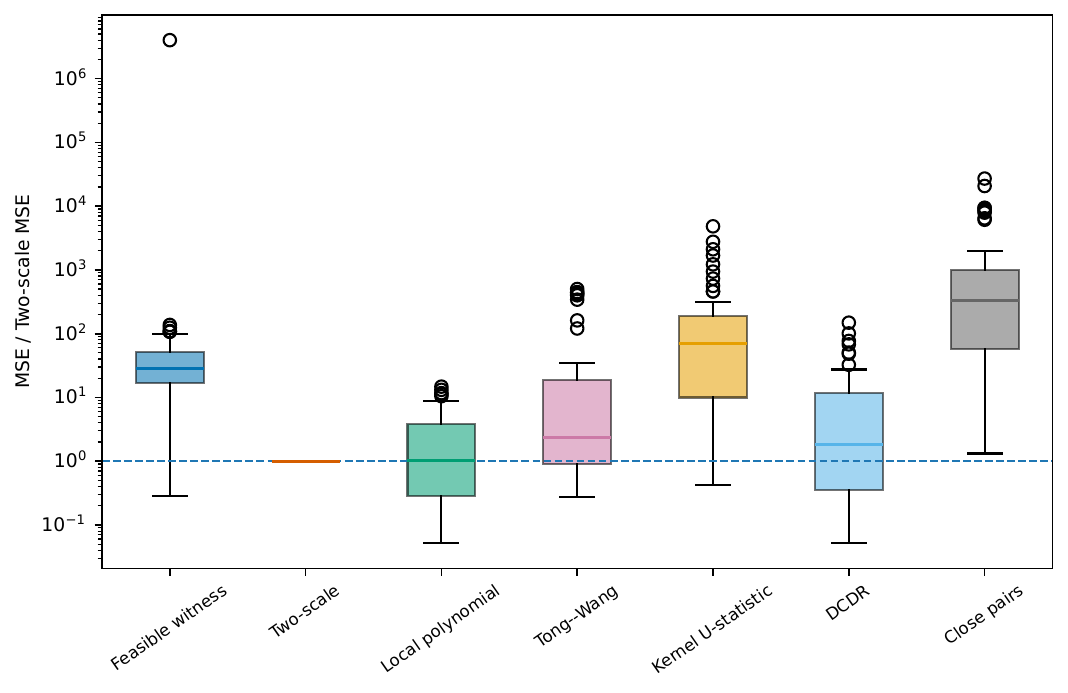}
\caption{Distribution of MSE ratios pooled across all 48 scenario--sample-size settings. The horizontal line at one is the two-scale estimator; the feasible witness estimator is displayed separately.}
\label{fig:expanded-simulation-boxplots}
\end{figure}

\begin{figure}
\centering
\includegraphics[width=\textwidth]{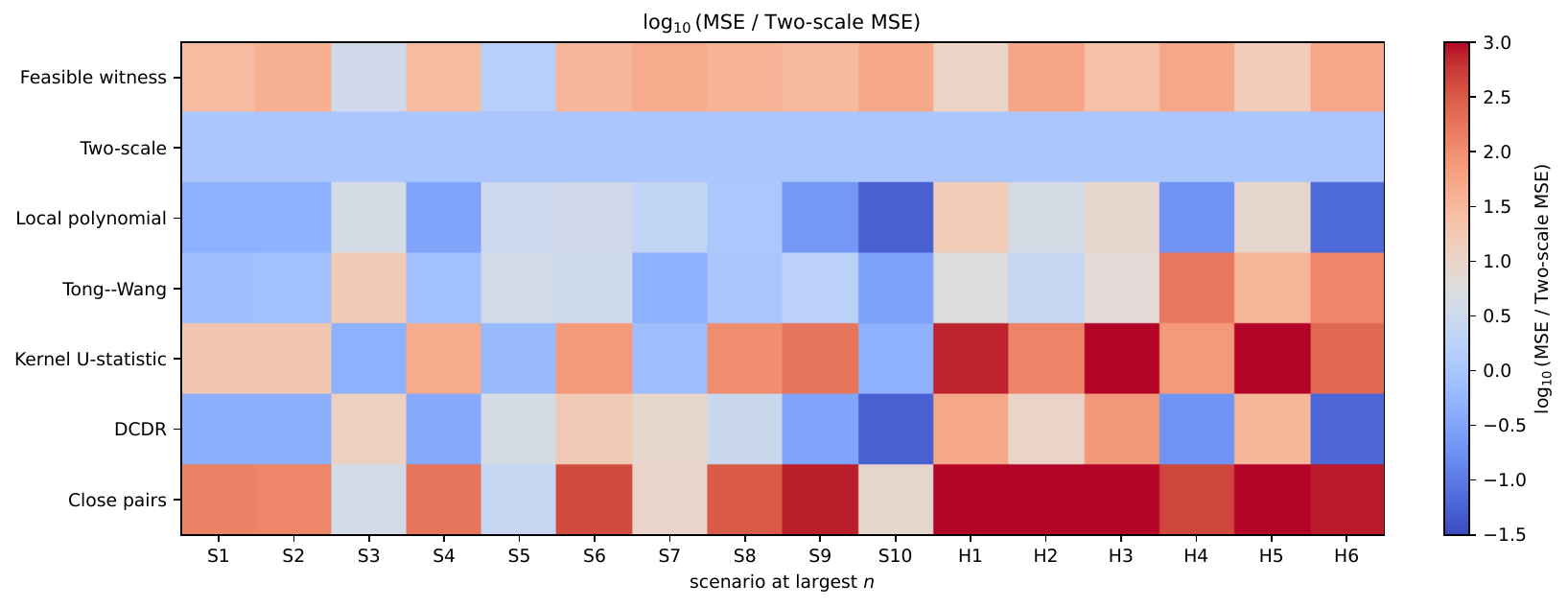}
\caption{Scenario-level MSE ratios at $n=16{,}384$, on a base-ten logarithmic scale. Blue entries favor the displayed method and red entries favor the two-scale estimator.}
\label{fig:expanded-simulation-heatmap}
\end{figure}

\paragraph{Results.}
The two-scale estimator has the smallest MSE in 15 of 48 settings, the largest win count among the seven methods. Its geometric mean relative MSE is the reference value one; the corresponding pooled ratios are $1.08$ for local polynomial residuals, $2.00$ for DCDR, $5.21$ for Tong--Wang, $30.81$ for the feasible witness estimator, $36.94$ for the kernel estimator, and $230.19$ for close pairs. 
The feasible witness estimator has median relative MSE $28.74$ and wins one setting; local polynomial residuals and DCDR win 11 and 12 settings, respectively. In one of the 692 design replications for the feasible witness estimator, fewer than the required number of witness cells remained and the estimator returned its deterministic midpoint fallback. We retain this outcome; it produces the method's long upper tail (and a maximum setting-level relative MSE of about $4.0\times10^6$).

The heat map clarifies the heterogeneity behind the aggregate: low-order polynomial or locally benign regression functions often favor simpler residual estimators, which avoid the variance cost of projection, screening, and extrapolation, whereas the two-scale estimator is strongest when cell-scale H\"older variation leaves a leading bias after local Taylor residualization.

\subsection{Experiments with empirical data}\label{subsec:expanded-empirical}

We illustrate both the feasible witness and the two-scale methods using two empirical datasets. 
Since the true response need not have constant variance, we consider a semi-synthetic protocol: the covariates are retained and a smooth regression function is learned from the data, but independent homoskedastic Gaussian errors with known variance are added for evaluation.  

\paragraph{Datasets.}
The California Housing data contain $20{,}640$ census-block-group observations and eight numeric predictors \citep{PaceBarry1997}.  After removing observations missing a required field, $20{,}433$ complete cases remain.  The CASP physicochemical protein tertiary structure data contain $45{,}730$ observations, nine predictors, and RMSD as the response \citep{Rana2013CASP}.


Our statistical model assumes a 
full-dimensional density on $[0,1]^d$. 
The estimator partitions the ambient cube, searches for close pairs within cells, and retains a cell only when its local polynomial Gram matrix is sufficiently well conditioned.  If several predictors are nearly linearly redundant, the observations concentrate near a lower-dimensional space, so many cells receive little mass. 

To reduce empirical near-collinearity before applying the methods, we use the same whitening pipeline for both data sets. 
A coordinatewise rank transform equalizes marginal scales and places every feature in $(0,1)$, but, being marginal, it leaves this joint dependence essentially unchanged.  For CASP, after this first transform the median and maximum absolute Spearman correlations are $0.844$ and $0.998$.

With a fixed seed for randomized tie-breaking, if $r_{ij}$ denotes the resulting rank, we set $U_{ij}=r_{ij}/(n+1)\in(0,1)$, after which we form Gaussian scores $Z_{ij}=\Phi_{\mathsf N}^{-1}(U_{ij})$, where $\Phi_{\mathsf N}$ is the standard normal distribution function, and center their columns.  Writing the empirical covariance as $\widehat\Sigma=V\operatorname{diag}(\widehat\lambda_1,\ldots,\widehat\lambda_d)V^\top$, we whiten by
\begin{equation*}
W=ZV\operatorname{diag}\bigl\{(\widehat\lambda_j\vee\tau)^{-1/2}\bigr\}_{j=1}^dV^\top,
\end{equation*}
where we set a small numerical floor $\tau=\max\{10^{-4},10^{-4}\widehat\lambda_{\max}\}$.  
The rank and Gaussian score steps give all coordinates a common marginal shape and scale, so the eigensystem of $\widehat\Sigma$ reflects joint linear dependence.  Whitening rescales each covariance eigen-direction to empirical variance $\widehat\lambda_j/(\widehat\lambda_j\vee\tau)$, which equals one when $\widehat\lambda_j\geq\tau$, thereby reducing linear anisotropy. This preprocessing is a conditioning device, not a verification of the full-dimensional density assumption; the experiments evaluate the methods on the transformed empirical designs.  

A second randomized marginal rank map returns each coordinate to $(0,1)$, and the resulting covariates are used by every method. 
The final median absolute Spearman correlations are $0.017$ for California and $0.035$ for CASP, while the corresponding maxima are $0.076$ and $0.328$.

For each dataset, the observed response is standardized and used only to fit a smooth pre-trained model. 
We use 384 random Fourier features for an RBF kernel, select the kernel scale from one-half, one, or twice the median-distance scale and ridge parameter from $\{10^{-3},10^{-2},10^{-1},1\}$ on a fixed holdout set, and then refit on all observations.  

\paragraph{Semi-synthetic regression functions.}
Let $g_i$ be the standardized fitted pre-trained model and let
\begin{equation*}
h_i=\frac{\operatorname{sign}(t_i)|t_i|^{3/2}-\overline h}
{\{n^{-1}\sum_{j=1}^n(\operatorname{sign}(t_j)|t_j|^{3/2}-\overline h)^2\}^{1/2}},
\qquad
t_i=d^{-1/2}\sum_{j=1}^dX_{ij}-\sqrt d/2.
\end{equation*}
Here $\overline h=n^{-1}\sum_{j=1}^n\operatorname{sign}(t_j)|t_j|^{3/2}$.
We evaluate a smooth data-informed regression function proportional to $g_i$ and a perturbed regression function proportional to $g_i+0.65h_i$.  Both are centered and rescaled to SNR $4$ with $\sigma^2=0.01$ and $\beta=1.5$.  For California, subsample sizes are $4096$, $8192$, and $16{,}384$, with $20$, $12$, and $8$ independent subsamples per regime. 
CASP additionally uses $n=32{,}768$, with $20$, $12$, $8$, and $6$ subsamples.  This gives fourteen
settings, 172 empirical subsamples, and 1,204 exact method risks across the seven methods.

\begin{table}
\centering
\small
\begin{tabular}{lrrrr}
\toprule
Method & Geo. mean rel. MSE & Median & 90th percentile & Wins \\
\midrule
Feasible witness estimator & 21.41 & 24.46 & 36.76 & 0/14 \\
Two-scale estimator & 1.00 & 1.00 & 1.00 & 14/14 \\
Local polynomial & 12.57 & 15.03 & 23.61 & 0/14 \\
Tong--Wang & 16.32 & 14.53 & 82.86 & 0/14 \\
Kernel $U$-statistic & 4.02 & 3.06 & 13.76 & 0/14 \\
DCDR & 51.55 & 55.29 & 117.66 & 0/14 \\
Close pairs & 29.29 & 21.15 & 83.44 & 0/14 \\
\bottomrule
\end{tabular}
\caption{Aggregate results over the fourteen empirical-design dataset--regime--sample-size settings. Relative MSE is normalized by the two-scale estimator within each setting.}
\label{tab:empirical-summary}
\end{table}


The two-scale estimator has the smallest design-averaged MSE in all fourteen settings. The kernel $U$-statistic is the closest aggregate comparator, with geometric mean relative MSE $4.02$; at $n=4096$ for the smooth CASP regression function its ratio is only $1.03$, but the gap grows with sample size. The geometric mean relative MSE is $12.57$ for local polynomial residuals, $16.32$ for Tong--Wang, $21.41$ for the feasible witness estimator, $29.29$ for close pairs, and $51.55$ for DCDR. The feasible witness estimator has median relative MSE $24.46$. The result for the two-scale estimator persists both for the smooth pilot and after adding the H\"older-boundary perturbation. 


\subsection{Overall interpretation}\label{subsec:expanded-discussion}

The experiments support that when the regression function is benign relative to the feature partition, local polynomial residuals or DCDR can be more efficient. The
feasible witness estimator shows decreasing risk in the rate experiment, but its use of only one pair per exact-occupancy cell entails a substantial finite-sample variance cost and occasional fallback risk. Reusing all eligible pairs and stabilizing the distance extrapolation can substantially reduce that cost: when cell-scale H\"older variation creates significant bias, the two-scale estimator provides the strongest overall performance in these experiments.  

In terms of limitations, 
hyperparameters are chosen only over $d\in\{6,8,10\}$ and $\beta\in\{1.1,1.5,2\}$; 
we expect higher polynomial degrees or substantially different dimensions may need a new hyperparameter sweep.


\section*{Discussion}
Although the results in this paper yield faster convergence rates than existing estimators in the literature, several directions remain open. In particular,
our work assumes that the degree of smoothness of the regression function is known. 
Data-driven adaptation to unknown smoothness and inference for the variance estimate remain separate problems.
 Moreover, the precise determination of the minimax rate also remains open.


 \section*{Acknowledgements}
 We are grateful to P.M. Aronow and Patrick Lopatto for helpful discussions and encouragement, and for sharing with us their work not intended for publication, which has independently developed estimators for this problem with identical rates of convergence.  
The estimator in \Cref{prop:stable-est-lr} and its proof were influenced and developed by multi-turn interactions with GPT-5.6 Sol Pro. The model also assisted with the computational implementation. The authors developed further results, provided simplification of arguments, independently checked the mathematical argument, 
and wrote the final manuscript. Thus, this work falls into a line of work 
where AI models have helped statisticians
make progress on unresolved problems, see e.g., \cite{dobriban2025solving,dobriban2026benjamini,AronowLopatto2026}, etc.


%


{\small
\setlength{\bibsep}{0.2pt plus 0.3ex}
\bibliographystyle{plainnat}
\bibliography{references}

@article{von1941mean,
  title={The mean square successive difference},
  author={Von Neumann, John and Kent, Robert H and Bellinson, HR and Hart, BI},
  journal={The Annals of Mathematical Statistics},
  volume={12},
  number={2},
  pages={153--162},
  year={1941},
  publisher={JSTOR}
}

@article{dobriban2026benjamini,
  title={The Benjamini--Hochberg Procedure Can Fail to Control the FDR for Correlated Two-Sided Gaussian Tests},
  author={Dobriban, Edgar},
  journal={arXiv preprint arXiv:2607.12208},
  year={2026}
}

@article{dobriban2025solving,
  title={Solving a Research Problem in Mathematical Statistics with AI Assistance},
  author={Dobriban, Edgar},
  journal={arXiv preprint arXiv:2511.18828},
  year={2025}
}

@article{stone1984asymptotically,
  title={An asymptotically optimal window selection rule for kernel density estimates},
  author={Stone, Charles J},
  journal={The Annals of Statistics},
  pages={1285--1297},
  year={1984},
  publisher={JSTOR}
}

@article{PaceBarry1997,
  author = {Pace, R. Kelley and Barry, Ronald},
  title = {Sparse Spatial Autoregressions},
  journal = {Statistics \& Probability Letters},
  volume = {33},
  number = {3},
  pages = {291--297},
  year = {1997},
  doi = {10.1016/S0167-7152(96)00140-X}
}

@misc{Rana2013CASP,
  author = {Rana, Prashant},
  title = {Physicochemical Properties of Protein Tertiary Structure},
  year = {2013},
  howpublished = {UCI Machine Learning Repository},
  doi = {10.24432/C5QW3H},
  note = {Dataset}
}

@article{newey2018cross,
  title={Cross-fitting and fast remainder rates for semiparametric estimation},
  author={Newey, Whitney K and Robins, James R},
  journal={arXiv preprint arXiv:1801.09138},
  year={2018}
}

@article{mcclean2026double,
  title={Double cross-fit doubly robust estimators: Beyond series regression},
  author={McClean, Alec and Balakrishnan, Sivaraman and Kennedy, Edward H and Wasserman, Larry},
  journal={Journal of the Royal Statistical Society Series B: Statistical Methodology},
  pages={qkag057},
  year={2026},
  publisher={Oxford University Press UK}
}

@incollection{RobinsEtAl2008,
  author = {Robins, James M. and Li, Lingling and Tchetgen Tchetgen, Eric J. and van der Vaart, Aad W.},
  title = {Higher Order Influence Functions and Minimax Estimation of Nonlinear Functionals},
  booktitle = {Probability and Statistics: Essays in Honor of David A. Freedman},
  series = {Institute of Mathematical Statistics Collections},
  volume = {2},
  pages = {335--421},
  year = {2008},
  publisher = {Institute of Mathematical Statistics},
  doi = {10.1214/193940307000000527}
}

@article{MunkEtAl2005,
  author = {Munk, Axel and Bissantz, Nicolai and Wagner, Thorsten and Freitag, Gudrun},
  title = {On Difference-Based Variance Estimation in Nonparametric Regression When the Covariate Is High Dimensional},
  journal = {Journal of the Royal Statistical Society: Series B},
  volume = {67},
  number = {1},
  pages = {19--41},
  year = {2005},
  doi = {10.1111/j.1467-9868.2005.00486.x}
}

@article{WangEtAl2008,
  author = {Wang, Lie and Brown, Lawrence D. and Cai, T. Tony and Levine, Michael},
  title = {Effect of Mean on Variance Function Estimation in Nonparametric Regression},
  journal = {The Annals of Statistics},
  volume = {36},
  number = {2},
  pages = {646--664},
  year = {2008},
  doi = {10.1214/009053607000000901}
}

@article{CaiLevineWang2009,
  author = {Cai, T. Tony and Levine, Michael and Wang, Lie},
  title = {Variance Function Estimation in Multivariate Nonparametric Regression},
  journal = {Journal of Multivariate Analysis},
  volume = {100},
  number = {1},
  pages = {126--136},
  year = {2009},
  doi = {10.1016/j.jmva.2008.03.007}
}

@article{RichardsonRotnitzky2014,
  author = {Richardson, Thomas S. and Rotnitzky, Andrea},
  title = {Causal Etiology of the Research of James M. Robins},
  journal = {Statistical Science},
  volume = {29},
  number = {4},
  pages = {459--484},
  year = {2014},
  doi = {10.1214/14-STS505}
}

@article{ShenEtAl2020,
  author = {Shen, Yandi and Gao, Chao and Witten, Daniela and Han, Fang},
  title = {Optimal Estimation of Variance in Nonparametric Regression with Random Design},
  journal = {The Annals of Statistics},
  volume = {48},
  number = {6},
  pages = {3589--3618},
  year = {2020},
  doi = {10.1214/20-AOS1944}
}

@misc{AronowLopatto2026,
  author = {Aronow, P. M. and Lopatto, Patrick},
  title = {On Rates Attainable under Random Design: A Negative Answer to a Problem of Robins},
  year = {2026},
  eprint = {2607.13170},
  archivePrefix = {arXiv},
  primaryClass = {math.ST}
}

@article{Rice1984,
  author = {Rice, John A.},
  title = {Bandwidth Choice for Nonparametric Regression},
  journal = {The Annals of Statistics},
  volume = {12},
  number = {4},
  pages = {1215--1230},
  year = {1984}
}

@article{HallKayTitterington1990,
  author = {Hall, Peter and Kay, J. W. and Titterington, D. M.},
  title = {Asymptotically Optimal Difference-Based Estimation of Variance in Nonparametric Regression},
  journal = {Biometrika},
  volume = {77},
  number = {3},
  pages = {521--528},
  year = {1990}
}

@article{BrownLevine2007,
  author = {Brown, Lawrence D. and Levine, Michael},
  title = {Variance Estimation in Nonparametric Regression via the Difference Sequence Method},
  journal = {The Annals of Statistics},
  volume = {35},
  number = {5},
  pages = {2219--2232},
  year = {2007},
  doi = {10.1214/009053607000000145}
}

@article{MullerSchickWefelmeyer2003,
  author = {M{\"u}ller, Ursula U. and Schick, Anton and Wefelmeyer, Wolfgang},
  title = {Estimating the Error Variance in Nonparametric Regression by a Covariate-Matched {U}-Statistic},
  journal = {Statistics},
  volume = {37},
  number = {3},
  pages = {179--188},
  year = {2003},
  doi = {10.1080/0233188031000078051}
}

@article{HallMarron1990,
  author = {Hall, Peter and Marron, J. S.},
  title = {On Variance Estimation in Nonparametric Regression},
  journal = {Biometrika},
  volume = {77},
  number = {2},
  pages = {415--419},
  year = {1990},
  doi = {10.1093/biomet/77.2.415}
}

@article{TongWang2005,
  author = {Tong, Tiejun and Wang, Yuedong},
  title = {Estimating Residual Variance in Nonparametric Regression Using Least Squares},
  journal = {Biometrika},
  volume = {92},
  number = {4},
  pages = {821--830},
  year = {2005},
  doi = {10.1093/biomet/92.4.821}
}

@article{TongMaWang2013,
  author = {Tong, Tiejun and Ma, Yanyuan and Wang, Yuedong},
  title = {Optimal Variance Estimation without Estimating the Mean Function},
  journal = {Bernoulli},
  volume = {19},
  number = {5A},
  pages = {1839--1854},
  year = {2013},
  doi = {10.3150/12-BEJ432}
}

@article{DuSchick2009,
  author = {Du, Jichang and Schick, Anton},
  title = {A Covariate-Matched Estimator of the Error Variance in Nonparametric Regression},
  journal = {Journal of Nonparametric Statistics},
  volume = {21},
  number = {3},
  pages = {263--285},
  year = {2009},
  doi = {10.1080/10485250802626873}
}

@article{LiitiainenEtAl2010,
  author = {Liiti{\"a}inen, Elia and Corona, Francesco and Lendasse, Amaury},
  title = {Residual Variance Estimation Using a Nearest Neighbor Statistic},
  journal = {Journal of Multivariate Analysis},
  volume = {101},
  number = {4},
  pages = {811--823},
  year = {2010},
  doi = {10.1016/j.jmva.2009.12.020}
}

@article{LiLin2020,
  author = {Li, Zhijian and Lin, Wei},
  title = {Efficient Error Variance Estimation in Non-Parametric Regression},
  journal = {Australian \& New Zealand Journal of Statistics},
  volume = {62},
  number = {4},
  pages = {467--484},
  year = {2020},
  doi = {10.1111/anzs.12311}
}
}

\appendix

\section{Proofs for Sections \ref{sec:model} and \ref{sec:est_low_reg}}

Throughout the appendix, constants may depend on the fixed model-class constants, $d$, $\beta_b$, $\beta_g$, $\lambda_0$, the fixed screening and ridge parameters, and the polynomial basis, but not on $n$ or on the particular parameter $\theta$.

\subsection{Existence and deterministic bias bound}

\begin{proof}[Proof of \Cref{lem:existence_aqstar}]
Let
\begin{equation*}
\mathcal A_Q
=
\left\{
\mathbf a\in\R^{N_Q}:
\|\mathbf a\|_2=1,
\ \Phi_Q^\top\mathbf a=0
\right\}.
\end{equation*}
Since $\operatorname{rank}(\Phi_Q)\leq q$ and $N_Q\geq q+1$, the null space of $\Phi_Q^\top$ is nontrivial. Hence $\mathcal A_Q$ is nonempty. It is also a closed subset of the unit sphere, and therefore compact.

For $f\in\cH^{\beta_b}(L;U)$, recall $f_Q=(f(X_{i_1}),\ldots,f(X_{i_{N_Q}}))^\top$. Since $\|f\|_\infty\leq L$, one has $\|f_Q\|_2\leq L\sqrt{N_Q}$. Thus, for $\mathbf a,\mathbf b$ on the unit sphere,
\begin{align*}
&\left|
\{\mathbf a^\top f_Q\}^2
-\{\mathbf b^\top f_Q\}^2
\right|\\
&\qquad\leq
|\{\mathbf a-\mathbf b\}^\top f_Q|
\left
(|\mathbf a^\top f_Q|+|\mathbf b^\top f_Q|
\right)
\leq2L^2N_Q\|\mathbf a-\mathbf b\|_2.
\end{align*}
Taking the supremum over $f$ shows that $\mathcal B_Q$ is Lipschitz, hence continuous, on the unit sphere. It therefore attains its minimum on the compact set $\mathcal A_Q$.

A measurable minimizer can be fixed as follows. For each fixed $m=N_Q\geq q+1$, the restrictions of the H\"older ball form a separable subset of $C([0,1]^d)$ in the uniform metric, so the supremum in \eqref{eq:ideal-cell-objective} can be taken over a fixed countable dense subset. Moreover, the uniform bounds on $f$ and its gradient show that this objective is jointly continuous in the $m$ covariates and the unit vector $\mathbf a$. Partition the configuration space into the finitely many Borel strata on which $\operatorname{rank}(\Phi_Q)$ is fixed. On each stratum the orthogonal projector onto $\mathcal N(\Phi_Q^\top)$ is continuous, so its feasible unit-sphere correspondence is continuous and compact-valued. The measurable maximum theorem gives a Borel minimizer on each stratum. Combining these selections, and then the possible occupancies, gives an $X$-measurable choice of $\mathbf a_Q^\star$.
\end{proof}

\begin{proof}[Proof of \Cref{lem:pe-bias bound}]
Fix $t\in(0,1]$, a cell $Q$, and a pair $e=(i,j)$ satisfying the conditions of the lemma. Let $x_Q$ be the cell center, and let $T_Q$ be the degree-$\ell$ Taylor polynomial of the extension of $f$ at $x_Q$, expressed in normalized coordinates $u$. Define
\begin{equation*}
r_Q(u)=f(b_Q+A_Qu)-T_Q(u),
\qquad u\in[0,1]^d.
\end{equation*}
The H\"older assumption on $f$ and the bounded aspect ratio of the cells  $Q$ imply that there exists a constant $C>0$ such that
\begin{equation}\label{eq:appendix-taylor-remainder}
\sup_{u\in[0,1]^d}|r_Q(u)|\leq CH_n^{\beta_b},
\qquad
|r_Q(u)-r_Q(v)|\leq CH_n^{\beta_b}\|u-v\|_2.
\end{equation}
Indeed, the first inequality is the usual Taylor remainder bound. For the second, we apply the Taylor remainder bound to the gradient. Specifically, in the original coordinates, the gradient of the remainder is $O(H_n^{\beta_b-1})$, and multiplication by $A_Q$ when passing to normalized coordinates contributes another factor $O(H_n)$. Indeed, this is where $\beta_b>1$ is crucially used.

Let $\mathbf T_Q=(T_Q(U_i))_{i\in I_Q}$ and $\mathbf r_Q=(r_Q(U_i))_{i\in I_Q}$. Since $T_Q$ has degree at most $\ell$, $\mathbf T_Q$ lies in the column space of $\Phi_Q$, so $R_Q\mathbf T_Q=0$. Hence
$d_e^\top R_Qf_Q=d_e^\top R_Q\mathbf r_Q$. Writing
$\Delta\psi_e=\psi(U_i)-\psi(U_j)$, we obtain
\begin{equation}\label{eq:appendix-bias bound-decomposition}
d_e^\top R_Qf_Q
=
r_Q(U_i)-r_Q(U_j)
-\Delta\psi_e^\top G_Q^{-1}\Phi_Q^\top\mathbf r_Q.
\end{equation}
The first term is bounded by $CH_n^{\beta_b}t$ by \eqref{eq:appendix-taylor-remainder} and the close-pair distance condition. Moreover, since the basis is Lipschitz,
$\|\Delta\psi_e\|_2\leq Ct$. Also, based on the choice of the cell, we have
$\|G_Q^{-1}\|_{\op}\leq(\kappa N_Q)^{-1}$, while boundedness of the basis on compact sets and \eqref{eq:appendix-taylor-remainder} give
$\|\Phi_Q^\top\mathbf r_Q\|_2\leq CN_QH_n^{\beta_b}$. Thus the second term in \eqref{eq:appendix-bias bound-decomposition} is also $O(H_n^{\beta_b}t)$. Dividing by $\sqrt{v_e}\geq\sqrt{v_0}$ proves \eqref{eq:pe-bias bound-bound}. 
\end{proof}

\subsection{Availability of certificate cells}

\begin{proof}[Proof of \Cref{lem:pe-availability}]
Set $k=q+2$. We first construct a fixed configuration in the normalized cell that certifies the existence of an eligible projected pair.

First we note that the collection $\{\psi(u):u\in(0,1)^d\}$ spans $\R^q$. Otherwise, there would be a nonzero $a\in\R^q$ such that the polynomial $u\mapsto a^\top\psi(u)$ vanished on the open set $(0,1)^d$, and hence vanished identically, contradicting linear independence of the basis. We may therefore choose distinct points $z_1,\ldots,z_q\in(0,1)^d$ such that
$V_0=
\begin{pmatrix}
\psi(z_1)^\top\\
\vdots\\
\psi(z_q)^\top
\end{pmatrix}
$
is nonsingular. By continuity, there exist pairwise disjoint closed boxes $B_1,\ldots,B_q\subset(0,1)^d$ and $\eta>0$ such that
\begin{equation}\label{eq:certificate-unisolvency-new}
\sigma_{\min}
\begin{pmatrix}
\psi(u_1)^\top\\
\vdots\\
\psi(u_q)^\top
\end{pmatrix}
\geq\eta
\end{equation}
whenever $u_r\in B_r$, $r=1,\ldots,q$.

Choose a closed cube $B_0\subset(0,1)^d$ disjoint from $B_1,\ldots,B_q$. There are an inner cube $B_0^-\subset B_0$ of positive volume and $h_0>0$ such that
\begin{equation}\label{eq:certificate-close-volume-new}
\int_{B_0}\int_{B_0}
\ind\{\|u-v\|_\infty\leq h/2\}\,du\,dv
\geq |B_0^-|h^d
\end{equation}
for every $0<h\leq h_0$. Indeed, choose $B_0^-$ at positive distance from the boundary of $B_0$ and take $h_0$ so that $u+[-h/2,h/2]^d\subset B_0$ for all $u\in B_0^-$.

We now Poissonize the design. Let $\Pp^\circ$ denote the law of a Poisson point process on $[0,1]^d$ with intensity $np(x)\,dx$, and let $K_Q$ be its number of points in $Q$. In normalized coordinates, the restriction to $Q$ has intensity
$
\lambda_Q(u)
=
\frac{n}{J_n}p(b_Q+A_Qu),
u\in[0,1]^d.
$
Since $J_n\asymp n$ and $\underline p\leq p\leq\overline p$, there are constants $0<\lambda_-\leq\lambda_+<\infty$ such that
\begin{equation}\label{eq:poisson-cell-intensity-bounds-new}
\lambda_-\leq\lambda_Q(u)\leq\lambda_+
\end{equation}
uniformly over $Q$, $u$, and all sufficiently large $n$.

For a deterministic $0<h\leq h_0$, call $Q$ certified when its Poisson restriction contains exactly one point $u_r$ in each $B_r$, $r=1,\ldots,q$, exactly two points $u,v$ in $B_0$ satisfying $\|u-v\|_\infty\leq h/2$, and no other points. Let $I_Q(h)$ be the indicator of this event and $p_Q(h)=\E^\circ I_Q(h)$. The restrictions to different cells are independent. Moreover, writing $\Lambda_Q=\int_{[0,1]^d}\lambda_Q(u)\,du$, the Poisson Janossy formula and \eqref{eq:certificate-close-volume-new}--\eqref{eq:poisson-cell-intensity-bounds-new} give
\begin{align}
p_Q(h)
&=
e^{-\Lambda_Q}
\left\{\prod_{r=1}^q\int_{B_r}\lambda_Q(u)\,du\right\}
\frac12
\int_{B_0}\int_{B_0}
\ind\{\|u-v\|_\infty\leq h/2\}
\lambda_Q(u)\lambda_Q(v)\,du\,dv
\geq c_{\mathrm{cert}}h^d.
\label{eq:poisson-certificate-probability-new}
\end{align}
Consequently, for
$S_h=\sum_{Q\in\cQ_n}I_Q(h)$,
$\mu_h=\E^\circ S_h$,
one has
\begin{equation}\label{eq:poisson-certificate-mean-new}
\mu_h\geq cJ_nh^d\geq c'nh^d.
\end{equation}
In particular, a multiplicative Chernoff bound under $\Pp^\circ$ gives
$\Pp^\circ\{S_h<cnh^d\}
\leq\exp(-c'nh^d)$.

Each certified cell passes the deterministic screens. On a certified Poisson configuration, use the same definitions of $N_Q$, $\Phi_Q$, $G_Q$, $R_Q$, and $v_e$ as for a fixed-size point configuration, with $N_Q=K_Q=q+2$. Let $L_\psi$ be a Lipschitz constant for $\psi$, and choose
\begin{equation}\label{eq:certificate-screen-constants-new}
\kappa=\frac{\eta^2}{2(q+2)},
\qquad
v_0=1.
\end{equation}
The $q$ points in $B_1,\ldots,B_q$ form a matrix $V_Q$ satisfying $\lambda_{\min}(V_Q^\top V_Q)\geq\eta^2$ by \eqref{eq:certificate-unisolvency-new}. Since $G_Q\succeq V_Q^\top V_Q$ and $N_Q=q+2$,
$\lambda_{\min}(G_Q/N_Q)
\geq\frac{\eta^2}{q+2}
\geq\kappa$.
For the two points $u,v\in B_0$, put $\Delta\psi=\psi(u)-\psi(v)$. If $e$ is this pair, then
\begin{align*}
v_e
&=2-\Delta\psi^\top G_Q^{-1}\Delta\psi
\geq
2-\frac{\|\Delta\psi\|_2^2}{\eta^2}
\geq
2-\frac{L_\psi^2d}{4\eta^2}h^2.
\end{align*}
After decreasing $h_0$ if necessary, $v_e\geq1=v_0$. The pair also has normalized distance at most $h/2$, so every certified cell belongs to $\mathcal C_n(h)$.

It remains to de-Poissonize. Let $K=\sum_QK_Q$ be the total point count. Directly dividing the preceding Chernoff bound by $\Pp^\circ(K=n)$ would lose a factor of order $\sqrt n$. The following argument cancels that factor. Fix $t>0$, set
\begin{equation*}
\zeta_Q=\E^\circ e^{-tI_Q(h)}
=1-(1-e^{-t})p_Q(h),
\qquad
L_t=\E^\circ e^{-tS_h}=\prod_{Q\in\cQ_n}\zeta_Q,
\end{equation*}
and define the tilted law
$\frac{d\Pp_t}{d\Pp^\circ}
=
\frac{e^{-tS_h}}{L_t}$.
Write $\E_t$ for expectation under $\Pp_t$. The cell restrictions remain independent under $\Pp_t$, and
\begin{equation}\label{eq:poisson-tilt-normalizer-new}
L_t
\leq
\exp\{-(1-e^{-t})\mu_h\}.
\end{equation}

For de-Poissonization, we only need the point-probability bound
\begin{equation}\label{eq:tilted-total-anticoncentration-new}
\Pp_t(K=n)\leq\frac{C}{\sqrt n}.
\end{equation}
Indeed, certification requires $K_Q=q+2\geq3$, so $I_Q(h)=0$ when $K_Q\in\{0,1\}$. By \eqref{eq:poisson-cell-intensity-bounds-new} and $\zeta_Q\leq1$, there are constants $a_0,a_1>0$ such that
\begin{equation*}
\Pp_t(K_Q=0)=\frac{e^{-\Lambda_Q}}{\zeta_Q}\geq a_0,
\qquad
\Pp_t(K_Q=1)=\frac{\Lambda_Qe^{-\Lambda_Q}}{\zeta_Q}\geq a_1
\end{equation*}
uniformly in $Q$. Choose a fixed $0<\rho<\min\{2a_0,2a_1,1\}$. The law of each $K_Q$ under $\Pp_t$ is a mixture of a $\operatorname{Bernoulli}(1/2)$ law with weight $\rho$ and a residual probability law with weight $1-\rho$: subtracting mass $\rho/2$ from each of the atoms at zero and one leaves a nonnegative measure of mass $1-\rho$.

We sample from these mixtures independently across cells. Let $M$ be the number of cells assigned to the fair Bernoulli component, so $M\sim\operatorname{Binomial}(J_n,\rho)$. Conditional on the component assignments and the counts drawn from the residual components, $K$ is a deterministic shift of a $\operatorname{Binomial}(M,1/2)$ variable. By Stirling's formula, the largest atom of this binomial law is at most $C/\sqrt{M+1}$. Splitting according to whether $M\geq\rho J_n/2$, and applying Chebyshev's inequality to $M$, gives
\begin{align*}
\Pp_t(K=n)
&\leq
\frac{C}{\sqrt{1+\rho J_n/2}}
+\Pp_t\{M<\rho J_n/2\}
\leq
\frac{C}{\sqrt{1+\rho J_n/2}}
+\frac{4(1-\rho)}{\rho J_n}
\leq\frac{C}{\sqrt n},
\end{align*}
where we used $J_n\asymp n$. This proves \eqref{eq:tilted-total-anticoncentration-new} with constants uniform over the model class and $0<h\leq h_0$.

Take $s_h=c_0nh^d$, where $c_0>0$ is sufficiently small. By exponential Markov, \eqref{eq:poisson-certificate-mean-new}, \eqref{eq:poisson-tilt-normalizer-new}, and \eqref{eq:tilted-total-anticoncentration-new},
\begin{align}
\Pp^\circ\{S_h<s_h,\ K=n\}
&\leq
e^{ts_h}\E^\circ\{e^{-tS_h}\ind\{K=n\}\}
=
e^{ts_h}L_t\Pp_t(K=n)
\leq
\frac{C}{\sqrt n}\exp(-cnh^d).
\label{eq:joint-poisson-certificate-tail-new}
\end{align}
Here $t$ is any fixed positive constant and $c_0$ is chosen so that the positive term $ts_h$ is absorbed by $(1-e^{-t})\mu_h$.

Under $\Pp^\circ$, $K\sim\operatorname{Poisson}(n)$, and Stirling's formula gives $\Pp^\circ(K=n)\geq c/\sqrt n$. Conditional on $K=n$, the points are exactly an iid sample from $p$. Since $S_h\leq|\mathcal C_n(h)|$, \eqref{eq:joint-poisson-certificate-tail-new} therefore gives
\begin{align*}
\Pp\{|\mathcal C_n(h)|<c_0nh^d\}
&\leq
\Pp^\circ\{S_h<s_h\mid K=n\}
\leq
C\exp(-cnh^d).
\end{align*}
Applying this with $h=t_n$ proves \eqref{eq:pe-availability}.
\end{proof}

\subsection{A simpler availability bound}\label{sec:availability-chebyshev}

The exponential probability bound in \Cref{lem:pe-availability} is stronger than is needed for the risk bounds.  The following fixed-sample version is sufficient and avoids Poissonization entirely.

\begin{lemma}[Fixed-sample availability by a second-moment argument]\label{lem:pe-availability-chebyshev}
Choose $\kappa$ and $v_0$ as in \eqref{eq:certificate-screen-constants-new}. Whenever $t_n\to0$ and $nt_n^d\to\infty$, there are constants $c,C>0$ such that
\begin{equation}\label{eq:pe-availability-chebyshev}
\Pp\left\{
|\mathcal C_n(t_n)|<cnt_n^d
\right\}
\leq \frac{C}{nt_n^d}
\end{equation}
for all sufficiently large $n$, uniformly over $\theta\in\Theta(\beta_b,\beta_g,d)$.
\end{lemma}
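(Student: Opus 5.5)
The plan is a second-moment argument applied directly to the fixed-size iid sample, reusing the deterministic certificate from the proof of \Cref{lem:pe-availability}. Fix the boxes $B_0,\ldots,B_q$, the constants $\eta,h_0$, and the screening constants in \eqref{eq:certificate-screen-constants-new}. For $h=t_n\leq h_0$, which holds for large $n$, call a cell $Q$ \emph{certified} if the sample contains exactly $q+2$ points in $Q$, with exactly one normalized point in each $B_r$ ($r\ge1$) and two points in $B_0$ at normalized $\ell_\infty$ distance at most $h/2$. The deterministic part of that proof shows every certified cell lies in $\mathcal C_n(h)$. It therefore suffices to lower bound $S=\sum_Q I_Q$, where $I_Q$ is now the certification indicator under the multinomial (fixed-$n$) sample.

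\textbf{First moment.} For a fixed cell, the probability that a specified set of $q+2$ indices lands in the prescribed configuration and no other index lands in $Q$ is at least
\[
c\,(\pi_Q)^{q+2}\,h^d\,(1-\pi_Q)^{n-q-2},
\]
where $\pi_Q=\int_Q p\asymp J_n^{-1}\asymp n^{-1}$. The factor $h^d$ comes from \eqref{eq:certificate-close-volume-new} together with the density bounds after rescaling to normalized coordinates. Multiplying by the number of index choices, $\binom{n}{q+2}\times$ a constant number of assignments, gives $\E I_Q\ge c h^d$, since $(n\pi_Q)^{q+2}\asymp1$ and $(1-\pi_Q)^{n}\ge e^{-C}$. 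Summing over the $J_n\asymp n$ cells yields $\E S\ge c_1 nh^d$, uniformly over $\theta$ because only $\underline p,\overline p$ enter.

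\textbf{Second moment.} This is the main obstacle, since the cells are not independent under a fixed sample size. The plan is to use the fact that occupancy indicators of distinct cells are negatively dependent, but certification is not monotone, so I would instead compute the covariance directly. For $Q\ne Q'$, the joint probability that both are certified is
\[
\binom{n}{q+2}\binom{n-q-2}{q+2}\,(\text{config. probs})\,(1-\pi_Q-\pi_{Q'})^{n-2q-4}.
\]
Compare this with the product of the marginals, each written with $\binom{n}{q+2}(1-\pi)^{n-q-2}$. The ratio is
\[
\frac{(n-q-2)_{q+2}}{(n)_{q+2}}\cdot\frac{(1-\pi_Q-\pi_{Q'})^{n-2q-4}}{(1-\pi_Q)^{n-q-2}(1-\pi_{Q'})^{n-q-2}}.
\]
Expanding logarithms, the first factor is $1-O(1/n)$ and the second is $\exp(O(n\pi^2)+O(\pi))=1+O(1/n)$. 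So $\operatorname{Cov}(I_Q,I_{Q'})\le (C/n)\E I_Q\,\E I_{Q'}$. Summing gives
\[
\Var S\le \E S+\frac{C}{n}(\E S)^2.
\]

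\textbf{Conclusion.} Chebyshev with threshold $c=c_1/2$ gives
\[
\Pp\{S<\tfrac12\E S\}\le \frac{4\Var S}{(\E S)^2}\le \frac{4}{\E S}+\frac{4C}{n}\le \frac{C'}{nh^d},
\]
using $h^d\le1$. Taking $h=t_n$ and using $S\le|\mathcal C_n(t_n)|$ yields \eqref{eq:pe-availability-chebyshev}, with constants uniform over the model class.
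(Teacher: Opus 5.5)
Your proposal is correct and follows the paper's proof essentially step for step. It uses the same fixed-sample certificate indicators, the same multinomial first-moment bound $\E I_Q\asymp h^d$, the same pairwise ratio $(n-k)_k/(n)_k$ times an occupancy factor, yielding $\operatorname{Cov}(I_Q,I_{Q'})\leq (C/n)\E I_Q\,\E I_{Q'}$, and then Chebyshev. The only cosmetic difference is that the paper bounds the occupancy factor by $\{(1-\pi_Q)(1-\pi_R)\}^{-k}$, using $1-\pi_Q-\pi_R\leq(1-\pi_Q)(1-\pi_R)$, rather than expanding logarithms.
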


\begin{proof}
Set $k=q+2$ and use the boxes $B_0,B_1,\ldots,B_q$ constructed in the proof of \Cref{lem:pe-availability}. For $Q=b_Q+A_Q[0,1]^d$ and a Borel set $A\subseteq[0,1]^d$, define
\begin{equation*}
\nu_Q(A)
=
\Pp\{X_1\in Q,\ A_Q^{-1}(X_1-b_Q)\in A\}
=
\frac1{J_n}\int_A p(b_Q+A_Qu)\,du,
\end{equation*}
and let $\pi_Q=\nu_Q([0,1]^d)=\Pp(X_1\in Q)$. The density bounds imply
\begin{equation}\label{eq:fixed-cell-measure-bounds}
\frac{\underline p}{J_n}|A|
\leq \nu_Q(A)\leq
\frac{\overline p}{J_n}|A|,
\qquad
\frac{\underline p}{J_n}
\leq \pi_Q\leq
\frac{\overline p}{J_n}.
\end{equation}

For a deterministic $0<h\leq h_0$, let $\xi_Q(h)$ be the indicator that the original fixed-size sample contains exactly one point in each normalized box $B_r$, $r=1,\ldots,q$, exactly two points $u,v$ in $B_0$ satisfying $\|u-v\|_\infty\leq h/2$, and no other point in $Q$. Put
\begin{align}
w_Q(h)
&=
\left\{\prod_{r=1}^q\nu_Q(B_r)\right\}
\frac12\int_{B_0}\int_{B_0}
\ind\{\|u-v\|_\infty\leq h/2\}
\,\nu_Q(du)\nu_Q(dv).
\label{eq:fixed-certificate-weight}
\end{align}
By \eqref{eq:certificate-close-volume-new} and \eqref{eq:fixed-cell-measure-bounds}, and by the corresponding immediate upper volume bound,
\begin{equation}\label{eq:fixed-certificate-weight-bounds}
cJ_n^{-k}h^d
\leq w_Q(h)\leq
CJ_n^{-k}h^d
\end{equation}
uniformly over $Q$ and the model class.

Letting $(n)_r=n(n-1)\cdots(n-r+1)$, a direct multinomial calculation gives the certificate probability
\begin{equation}\label{eq:fixed-certificate-probability}
\alpha_Q(h)
:=
\E\xi_Q(h)
=
(n)_k w_Q(h)(1-\pi_Q)^{n-k}.
\end{equation}
Because $J_n\asymp n$, \eqref{eq:fixed-cell-measure-bounds}--\eqref{eq:fixed-certificate-probability} yield
\begin{equation}\label{eq:fixed-certificate-probability-bounds}
ch^d\leq \alpha_Q(h)\leq Ch^d.
\end{equation}
Indeed, this follows from the facts that $(n)_kJ_n^{-k}$ is bounded above and away from zero, while
$(1-\pi_Q)^{n-k}$ is bounded away from zero uniformly for all sufficiently large $n$.

Although the indicators $\xi_Q(h)$ are not independent, 
we leverage that their covariances are small enough. For distinct cells $Q$ and $R$, another multinomial calculation gives
\begin{equation*}
\E\{\xi_Q(h)\xi_R(h)\}
=
(n)_{2k}w_Q(h)w_R(h)
(1-\pi_Q-\pi_R)^{n-2k}.
\end{equation*}
Consequently,
\begin{align}
\frac{\E\{\xi_Q(h)\xi_R(h)\}}
{\alpha_Q(h)\alpha_R(h)}
&=
\frac{(n)_{2k}}{(n)_k^2}
\frac{(1-\pi_Q-\pi_R)^{n-2k}}
{(1-\pi_Q)^{n-k}(1-\pi_R)^{n-k}}
\leq
\{(1-\pi_Q)(1-\pi_R)\}^{-k}
\leq 1+\frac{C}{n}.
\label{eq:fixed-certificate-pair-ratio}
\end{align}
Here we used
\begin{equation*}
\frac{(n)_{2k}}{(n)_k^2}
=\frac{(n-k)_k}{(n)_k}\leq1,
\qquad
1-\pi_Q-\pi_R
\leq(1-\pi_Q)(1-\pi_R),
\end{equation*}
followed by $\pi_Q\vee\pi_R\leq C/n$. It follows from \eqref{eq:fixed-certificate-pair-ratio} that
\begin{equation}\label{eq:fixed-certificate-covariance}
\operatorname{Cov}\{\xi_Q(h),\xi_R(h)\}
\leq
\frac{C}{n}\alpha_Q(h)\alpha_R(h).
\end{equation}
Note that we only need this upper bound since a negative covariance can only reduce the variance of the sum of indicators considered below. Specifically, let
$S_n(h)=\sum_{Q\in\cQ_n}\xi_Q(h)$,
$\mu_n(h)=\E S_n(h)$.
By \eqref{eq:fixed-certificate-probability-bounds} and $J_n\asymp n$,
\begin{equation}\label{eq:fixed-certificate-mean}
cnh^d\leq\mu_n(h)\leq Cnh^d.
\end{equation}
Moreover, for $h\leq 1$ \eqref{eq:fixed-certificate-covariance} gives
\begin{align}
\Var\{S_n(h)\}
&\leq
\sum_{Q\in\cQ_n}\alpha_Q(h)
+\frac{C}{n}
\sum_{Q\neq R}\alpha_Q(h)\alpha_R(h)
\leq
Cnh^d+Cn h^{2d}
\leq Cnh^d.
\label{eq:fixed-certificate-variance}
\end{align}
Choose $c_0>0$ small enough that $c_0nh^d\leq\mu_n(h)/2$. Chebyshev's inequality, \eqref{eq:fixed-certificate-mean}, and \eqref{eq:fixed-certificate-variance} imply
\begin{align}
\Pp\{S_n(h)<c_0nh^d\}
&\leq
\Pp\{|S_n(h)-\mu_n(h)|\geq\mu_n(h)/2\}
\leq
\frac{4\Var\{S_n(h)\}}{\mu_n(h)^2}
\leq \frac{C}{nh^d}.
\label{eq:fixed-certificate-chebyshev}
\end{align}
The deterministic verification in the proof of \Cref{lem:pe-availability} shows that every cell counted by $S_n(h)$ belongs to $\mathcal C_n(h)$. Hence $S_n(h)\leq|\mathcal C_n(h)|$. Taking $h=t_n$ in \eqref{eq:fixed-certificate-chebyshev} proves \eqref{eq:pe-availability-chebyshev}.
\end{proof}

To see why this weaker bound suffices for \Cref{prop:ideal_est_lr2}, decrease the  $c_0$ if necessary and take $t_n=\varepsilon_n$. 
Since $\mathcal T_n(\varepsilon_n)=\mathcal C_n(\varepsilon_n)$, the failure probability is at most $C/(n\varepsilon_n^d)$. The fallback has bounded squared error, so its contribution to the risk is of the same order as the conditional variance term in \Cref{lem:cell-average-risk}. Thus the risk remains bounded by $C\{H_n^{4\beta_b}\varepsilon_n^4+(n\varepsilon_n^d)^{-1}\}$, which gives the stated rate from \eqref{eq:lr-optimal-bandwidth}.

\subsection{The ideal and feasible witness estimators}

\begin{proof}[Proof of \Cref{lem:enough_good_Q}]
Use the constants from \Cref{lem:pe-availability} and apply that lemma with $t_n=\varepsilon_n$. For every $Q\in\mathcal C_n(\varepsilon_n)$, choose any eligible pair $e$. The vector
$\mathbf a_{Q,e}
=
\frac{R_Qd_e}{\sqrt{d_e^\top R_Qd_e}}
$ has unit norm and satisfies $\Phi_Q^\top\mathbf a_{Q,e}=0$. By \Cref{lem:pe-bias bound}, uniformly over $f\in\cH^{\beta_b}(L;U)$,
$(\mathbf a_{Q,e}^\top f_Q)^2
\leq CH_n^{2\beta_b}\varepsilon_n^2$.
Since $\mathbf a_Q^\star$ minimizes the worst-case conditional bias over all feasible unit contrasts,
\begin{equation*}
\mathcal B_Q(\mathbf a_Q^\star)
\leq
\mathcal B_Q(\mathbf a_{Q,e})
\leq CH_n^{2\beta_b}\varepsilon_n^2.
\end{equation*}
Thus $\mathcal C_n(\varepsilon_n)\subseteq\mathcal S_n(\varepsilon_n,C)$ after increasing $C$ if necessary. The conclusion follows directly from \eqref{eq:pe-availability}.
\end{proof}

We use the following elementary conditional risk bound twice.

\begin{lemma}\label{lem:cell-average-risk}
Let $\mathcal D$ be an $X$-measurable collection of disjoint cells, each containing $q+2$ observations. For every $Q\in\mathcal D$, let $\mathbf a_Q$ be an $X$-measurable unit vector supported on that cell. Suppose
$(\mathbf a_Q^\top f_Q)^2\leq B_n$
for every $Q\in\mathcal D$. On the event $|\mathcal D|\geq m_n\geq1$,
\begin{equation}\label{eq:cell-average-risk-bound}
\E\left[
\left\{
\frac1{|\mathcal D|}
\sum_{Q\in\mathcal D}(\mathbf a_Q^\top Y_Q)^2
-\sigma^2
\right\}^2
\mathrel{\Big|}X
\right]
\leq B_n^2+\frac{C}{m_n}.
\end{equation}
\end{lemma}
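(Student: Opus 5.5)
We prove \eqref{eq:cell-average-risk-bound} by a conditional bias--variance decomposition. Since $\mathcal D$ and the vectors $\mathbf a_Q$ are $X$-measurable, we can treat them as fixed throughout. Write $m=|\mathcal D|$ and $\widehat s=m^{-1}\sum_{Q\in\mathcal D}Z_Q$ with $Z_Q=(\mathbf a_Q^\top Y_Q)^2$. Put $\varepsilon_Q=(\varepsilon_{i})_{i\in I_Q}$ and $b_Q=\mathbf a_Q^\top f_Q$. Then $\mathbf a_Q^\top Y_Q=b_Q+\mathbf a_Q^\top\varepsilon_Q$.

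As in the intuition of \Cref{sec:intuition}, the relations $\E(\varepsilon_i\mid X)=0$, $\E(\varepsilon_i^2\mid X)=\sigma^2$ and conditional independence across $i$ give $\E(Z_Q\mid X)=\sigma^2\|\mathbf a_Q\|_2^2+b_Q^2=\sigma^2+b_Q^2$. Hence
\begin{equation*}
\E\bigl[(\widehat s-\sigma^2)^2\mid X\bigr]
=\Bigl(\frac1m\sum_{Q\in\mathcal D}b_Q^2\Bigr)^2+\Var(\widehat s\mid X).
\end{equation*}
The first term is at most $B_n^2$ by the hypothesis $b_Q^2\leq B_n$.

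For the variance, the cells in $\mathcal D$ are disjoint, so the index sets $I_Q$ are disjoint. Conditionally on $X$ the errors are independent, which makes the $Z_Q$ conditionally independent. Therefore $\Var(\widehat s\mid X)=m^{-2}\sum_{Q}\Var(Z_Q\mid X)\leq m^{-2}\sum_Q\E(Z_Q^2\mid X)$.

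It remains to bound $\E(Z_Q^2\mid X)=\E[(b_Q+\mathbf a_Q^\top\varepsilon_Q)^4\mid X]$ by a constant. We use $(x+y)^4\leq 8x^4+8y^4$. Since $\|\mathbf a_Q\|_2=1$ and $\|f\|_\infty\leq L$, Cauchy--Schwarz gives $|b_Q|\leq L\sqrt{q+2}$. By Minkowski's inequality in $L^4$, $\|\mathbf a_Q^\top\varepsilon_Q\|_{L^4(\cdot\mid X)}\leq\sum_r|a_r|M_4^{1/4}\leq\sqrt{q+2}\,M_4^{1/4}$, using condition (iii). (Alternatively, expand the fourth moment and use independence and mean zero.) Thus $\E(Z_Q^2\mid X)\leq C$ with $C$ depending only on $L,q,M_4$.

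This yields $\Var(\widehat s\mid X)\leq C/m\leq C/m_n$ on the event $m\geq m_n$, which completes the bound. There is no real obstacle. The only points needing care are two: measurability, which ensures conditioning on $X$ freezes $\mathcal D$ and the $\mathbf a_Q$; and disjointness of the cells, which gives conditional independence. Without disjointness, cross-covariances between the $Z_Q$ would appear.
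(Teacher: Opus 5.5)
Your proposal is correct and follows essentially the same route as the paper: the conditional mean $\sigma^2+m^{-1}\sum_{Q}b_Q^2$ bounds the squared bias by $B_n^2$, and conditional independence across disjoint cells plus a uniform bound on $\E\{(\mathbf a_Q^\top Y_Q)^4\mid X\}$ bounds the variance by $C/m_n$. The only difference is cosmetic. You bound the fourth moment of $\mathbf a_Q^\top\varepsilon_Q$ via Minkowski, which gives a constant depending on $q$. The paper instead expands it using independence and centering to get $M_4+3\overline v^{\,2}$.
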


\begin{proof}
Write $\mu_Q=\mathbf a_Q^\top f_Q$ and $\eta_Q=\mathbf a_Q^\top\varepsilon_Q$. Conditional on $X$,
$\E(\eta_Q)=0$ and $\E(\eta_Q^2)=\sigma^2$ because $\|\mathbf a_Q\|_2=1$. Hence
$\E\{(\mathbf a_Q^\top Y_Q)^2\mid X\}
=\sigma^2+\mu_Q^2$.
The conditional bias of the average is therefore at most $B_n$.

For the variance, conditional independence and centering of the errors give
\begin{align*}
\E(\eta_Q^4\mid X)
&=
\sum_r a_{Q,r}^4\E(\varepsilon_{i_r}^4\mid X)
+6\sum_{r<s}a_{Q,r}^2a_{Q,s}^2\sigma^4
\leq M_4+3\overline v^{\,2}.
\end{align*}
Also $|\mu_Q|\leq L\sqrt{q+2}$. Thus
$\E\{(\mathbf a_Q^\top Y_Q)^4\mid X\}\leq C$, uniformly in $Q$ and $\theta$. The cellwise data are conditionally independent across disjoint cells, so the conditional variance of their average is at most $C/|\mathcal D|\leq C/m_n$. Combining conditional squared bias and variance proves \eqref{eq:cell-average-risk-bound}.
\end{proof}

\begin{proof}[Proof of \Cref{prop:ideal_est_lr}]
We use the fixed-sample bound in \Cref{lem:pe-availability-chebyshev}. Decrease $c_0$ if necessary so that this lemma applies. The deterministic argument in the proof of \Cref{lem:enough_good_Q} gives
$
\mathcal C_n(\varepsilon_n)\subseteq\mathcal S_n(\varepsilon_n,C)$
after increasing $C$ if necessary. Therefore,
\begin{equation}\label{eq:ideal-good-event-chebyshev}
\Pp\{\mathcal G_n(\varepsilon_n)^c\}
\leq
\Pp\left\{
|\mathcal C_n(\varepsilon_n)|<c_0n\varepsilon_n^d
\right\}
\leq
\frac{C}{n\varepsilon_n^d}.
\end{equation}

On $\mathcal G_n(\varepsilon_n)$, apply \Cref{lem:cell-average-risk} with
$\mathcal D=\mathcal S_n(\varepsilon_n,C),
B_n=CH_n^{2\beta_b}\varepsilon_n^2,
m_n=c_0n\varepsilon_n^d$.
This gives
\begin{equation*}
\E\{(\widehat\sigma_{n,\mathrm{LR}}^2-\sigma^2)^2\mid X\}
\leq
CH_n^{4\beta_b}\varepsilon_n^4
+\frac{C}{n\varepsilon_n^d}
\end{equation*}
on that event. On its complement, the fallback lies in $[\underline v,\overline v]$, so its squared error is bounded by a constant. Integrating over the design and using \eqref{eq:ideal-good-event-chebyshev},
\begin{equation}\label{eq:ideal-risk-before-balance}
\sup_{\theta\in\Theta(\beta_b,\beta_g,d)}
\E_\theta(\widehat\sigma_{n,\mathrm{LR}}^2-\sigma^2)^2
\leq
C\left\{
H_n^{4\beta_b}\varepsilon_n^4
+\frac{1}{n\varepsilon_n^d}
\right\}.
\end{equation}
Since $H_n\asymp n^{-1/d}$ and
$\varepsilon_n\asymp n^{-(d-4\beta_b)/\{d(d+4)\}}$,
$H_n^{4\beta_b}\varepsilon_n^4
\asymp
\frac1{n\varepsilon_n^d}
\asymp
n^{-4(\beta_b+1)/(d+4)}$.
The contribution of $\mathcal G_n(\varepsilon_n)^c$ is absorbed into the term $(n\varepsilon_n^d)^{-1}$ by \eqref{eq:ideal-good-event-chebyshev}. This proves \eqref{eq:ideal-estimator-rate}.
\end{proof}

\begin{proof}[Proof of \Cref{prop:ideal_est_lr2}]
Choose $\kappa$ and $v_0$ as in \eqref{eq:certificate-screen-constants-new}, and decrease $c_0$ if necessary so that it is valid in \Cref{lem:enough_good_Q,lem:pe-availability,lem:pe-availability-chebyshev}. Then
$\mathcal T_n(\varepsilon_n)=\mathcal C_n(\varepsilon_n)$ because the two collections use the same screens. Thus
\begin{equation}\label{eq:feasible-good-event-chebyshev}
\Pp\{\widetilde{\mathcal G}_n(\varepsilon_n)^c\}
\leq \frac{C}{n\varepsilon_n^d}
\end{equation}
by \Cref{lem:pe-availability-chebyshev}.
For every retained cell, \Cref{lem:pe-bias bound} gives
$(\widetilde{\mathbf a}_Q^\top f_Q)^2
\leq CH_n^{2\beta_b}\varepsilon_n^2$.
Apply \Cref{lem:cell-average-risk} on $\widetilde{\mathcal G}_n(\varepsilon_n)$ and use the bounded fallback on the complement. Integrating over the design and using \eqref{eq:feasible-good-event-chebyshev} yields exactly the right-hand side of \eqref{eq:ideal-risk-before-balance}. The balancing calculation in the proof of \Cref{prop:ideal_est_lr} therefore proves \eqref{eq:feasible-estimator-rate}.
\end{proof}

Alternatively, one can use the exponential availability bounds in \Cref{lem:enough_good_Q,lem:pe-availability} in the preceding two proofs. They give
$\Pp\{\mathcal G_n(\varepsilon_n)^c\}
+\Pp\{\widetilde{\mathcal G}_n(\varepsilon_n)^c\}
\leq C\exp(-cn\varepsilon_n^d)$.
The same conditional risk calculation then bounds the mean squared error of each estimator by
\begin{equation}\label{eq:lr-risk-exponential-alternative}
C\left\{
H_n^{4\beta_b}\varepsilon_n^4
+\frac{1}{n\varepsilon_n^d}
+\exp(-cn\varepsilon_n^d)
\right\}.
\end{equation}
At \eqref{eq:lr-optimal-bandwidth}, $n\varepsilon_n^d\asymp n^{4(\beta_b+1)/(d+4)}\to\infty$, so the exponential term is negligible and this argument gives the same rates.

\subsection{Proof of the main theorem}

\begin{proof}[Proof of \Cref{thm:main}]
\Cref{prop:ideal_est_lr2} proves \eqref{eq:feasible-estimator-rate} under conditions (i)--(iii) for every $\beta_g>0$, with $\varepsilon_n$ chosen according to \eqref{eq:lr-optimal-bandwidth}. Its proof uses only the upper and lower bounds on the design density; the H\"older smoothness of $p$ is not used. In particular, when $0<\beta_g\leq\beta_g^\star$, the definition $\widehat\sigma_{n,\mathrm{sharp}}^2=\widetilde\sigma_{n,\mathrm{LR}}^2$ gives the second line of \eqref{eq:sharp-smooth-piecewise}.

Now suppose that $\beta_g>\beta_g^\star$. Under the stated specialization of \citet{RobinsEtAl2008}, their condition~(4.11) reduces to
\begin{equation*}
\beta_g>
\frac{\beta_b(1-4\beta_b/d)}
{1+2\beta_b/d+8(\beta_b/d)^2},
\end{equation*}
which is precisely $\beta_g>\beta_g^\star$. Let $\mathcal T$ denote the training subsample used for the regression and density pilots in the cited construction; the training and estimation sample sizes are fixed positive proportions of the total sample size $n$. In addition to the applicability conditions of that construction, the high-regularity mean squared-error comparison assumes the following uniform integrated bounds:
\begin{equation}\label{eq:hoif-uniform-moment-condition}
\begin{aligned}
\sup_{\theta\in\Theta(\beta_b,\beta_g,d)}
\E_\theta\left[
\left\{\E_\theta(\widetilde\sigma_{n,\mathrm{HOIF}}^2\mid\mathcal T)-\sigma^2\right\}^2
\right]
&\leq Cn^{-8\beta_b/(d+4\beta_b)},\\
\sup_{\theta\in\Theta(\beta_b,\beta_g,d)}
\E_\theta\left[
\Var_\theta(\widetilde\sigma_{n,\mathrm{HOIF}}^2\mid\mathcal T)
\right]
&\leq Cn^{-8\beta_b/(d+4\beta_b)}.
\end{aligned}
\end{equation}

The conditional stochastic-order bounds in the cited result alone do not imply \eqref{eq:hoif-uniform-moment-condition}; the latter is an additional assumption of the comparison. Given this assumption, the conditional bias--variance decomposition gives
\begin{align*}
&\sup_{\theta\in\Theta(\beta_b,\beta_g,d)}
\E_\theta
(\widetilde\sigma_{n,\mathrm{HOIF}}^2-\sigma^2)^2
=
\sup_{\theta\in\Theta(\beta_b,\beta_g,d)}
\E_\theta\left[
\Var_\theta(\widetilde\sigma_{n,\mathrm{HOIF}}^2\mid\mathcal T)
+
\left\{\E_\theta(\widetilde\sigma_{n,\mathrm{HOIF}}^2\mid\mathcal T)-\sigma^2\right\}^2
\right]\\
&\qquad\leq Cn^{-8\beta_b/(d+4\beta_b)}.
\end{align*}
Clipping cannot increase squared error, so the same bound holds for $\widehat\sigma_{n,\mathrm{HOIF}}^2=\widehat\sigma_{n,\mathrm{sharp}}^2$. This proves the first line of \eqref{eq:sharp-smooth-piecewise}.
\end{proof}

\section{A Numerically Stable All-Pairs Extension}\label{sec:est_num_stable}

The feasible witness estimator in the main text keeps one witness pair from each cell containing exactly $q+2$ observations. This section develops an optional numerically stable extension, evaluated alongside the feasible witness estimator in the experiments, that permits a bounded range of occupancies and reuses all eligible pairs.

For a cell $Q$, let $I_Q=\{i:X_i\in Q\}$, $N_Q=|I_Q|$, $\Phi_Q=(\psi(U_i)^\top)_{i\in I_Q}$, and $G_Q=\Phi_Q^\top\Phi_Q$. Fix an integer $N_{\max}\geq q+2$ and constants $\kappa>0$, $v_0>0$, and $\lambda>0$. Retain $Q$ when
$q<N_Q\leq N_{\max}$,
$\lambda_{\min}(N_Q^{-1}G_Q)\geq\kappa$,
and, in a retained cell, define the residual projections
$   P_Q=\Phi_QG_Q^{-1}\Phi_Q^\top,
 R_Q=I_{N_Q}-P_Q$.

Let $h_n\in(0,1]$ be a normalized scale parameter to be specified later. For an unordered pair $e=(i,j)$ in a retained cell, let $e_i,e_j\in\R^{N_Q}$ denote the coordinate vectors corresponding to $i,j$ in the local ordering of $I_Q$, set $d_e=e_i-e_j$, and retain the pair when its two components are sufficiently close and the projected contrast is nondegenerate:
\begin{equation}\label{eq:stable-pair-screen}
    \|U_i-U_j\|_\infty\leq h_n,
    \qquad
    v_e=d_e^\top R_Qd_e\geq v_0.
\end{equation}

We now use the pair-anchored projected contrast $R_Qd_e$ as the key object. Define the quadratic statistic and normalized squared feature distance
\begin{equation}\label{eq:stable-pair-statistic}
    Z_e=\frac{(d_e^\top R_QY_Q)^2}{v_e},
    \qquad
    s_e=\frac{\|U_i-U_j\|_2^2}{h_n^2}.
\end{equation}
Since $R_Q$ is an orthogonal projection,
\begin{equation}\label{eq:stable-pair-conditional-mean}
    \E(Z_e\mid X)
    =\sigma^2+\frac{(d_e^\top R_Qf_Q)^2}{v_e}.
\end{equation}
This shows why the construction improves on the close-pair estimator: the expression $d_e^\top R_Qf_Q$ removes the local polynomial trend of degree $\ell$, thereby reducing the bias.

Let $\mathcal E_n(Q)$ be the set of retained pairs in $Q$, let $\mathcal E_n=\bigcup_{Q\in\cQ_n}\mathcal E_n(Q)$, and let $M_n=|\mathcal E_n|$. When $M_n\geq1$, set $\bar s=M_n^{-1}\sum_{e\in\mathcal E_n}s_e$ and $S_s=\sum_{e\in\mathcal E_n}(s_e-\bar s)^2$, and use the ridge-intercept weights
\begin{equation}\label{eq:pe-weights}
    w_e=\frac1{M_n}-\frac{\bar s(s_e-\bar s)}{S_s+\lambda M_n}.
\end{equation}
The estimator is
\begin{equation}\label{eq:pe-estimator}
    \widetilde\sigma_{n,\mathrm{PE}}^2
    =
    \begin{cases}
    \displaystyle\sum_{e\in\mathcal E_n}w_eZ_e,&M_n\geq1,\\
    \underline v,&M_n=0,
    \end{cases}
    \qquad
    \widehat\sigma_{n,\mathrm{PE}}^2
    =\clip_{[\underline v,\overline v]}(\widetilde\sigma_{n,\mathrm{PE}}^2).
\end{equation}
Equivalently, pool all retained pairs, fit
\begin{equation*}
    (\widehat a,\widehat b)
    =\operatorname*{argmin}_{a,b}
    \left[
    \sum_{e\in\mathcal E_n}\{Z_e-a-b(s_e-\bar s)\}^2
    +\lambda M_nb^2
    \right],
\end{equation*}
and return the estimated intercept $\widehat a-\bar s\widehat b$. Thus the displayed estimator performs one global ridge regression over all retained pairs, rather than separate within-cell regressions followed by aggregation. For fixed $\lambda>0$, the weights do not exactly cancel a linear distance term: $\sum_e w_es_e=\bar s\lambda M_n/(S_s+\lambda M_n)$. The risk analysis uses weight stability, not exact extrapolation-bias cancellation.
\begin{figure}
\centering
\begin{tikzpicture}[
  x=1cm,y=1cm,font=\small,>=Latex,
  sample/.style={circle,fill=black!45,inner sep=1.25pt},
  pair/.style={circle,draw=blue!70!black,fill=blue!18,
               line width=0.8pt,minimum size=5.5pt,inner sep=0pt},
  box/.style={draw=black!25,rounded corners=2pt,fill=black!2,
              inner xsep=4pt,inner ysep=3pt,align=center}
]
\begin{scope}
  \node[font=\bfseries] at (1.65,4.05) {(a) Partition};
  \fill[blue!6] (2.2,1.1) rectangle (3.3,2.2);
  \draw[black!38,line width=0.35pt] (0,0) grid[step=1.1] (3.3,3.3);
  \draw[black,line width=0.8pt] (0,0) rectangle (3.3,3.3);
  \draw[blue!70!black,line width=1.05pt] (2.2,1.1) rectangle (3.3,2.2);
  \foreach \p in {(0.30,0.42),(0.78,0.85),(1.42,0.33),(1.82,0.72),
                   (2.45,0.48),(3.02,0.78),(0.35,1.48),(0.90,1.92),
                   (1.48,1.35),(1.88,1.82),(2.48,1.43),(2.82,1.76),
                   (0.30,2.48),(0.82,2.92),(1.42,2.50),(1.88,2.85),
                   (2.48,2.58),(3.02,2.92)}
    \node[sample] at \p {};
  \node[blue!70!black,fill=white,inner sep=1pt] at (2.75,2.42)
    {retained $Q$};
  \node[align=center] at (1.65,-0.55)
    {$J_n\asymp n$ cells\\[-1pt]$\operatorname{diam}(Q)\asymp H_n$};
\end{scope}

\draw[->,line width=0.8pt] (3.55,1.65)--(4.35,1.65)
  node[midway,above] {zoom};

\begin{scope}[xshift=4.65cm]
  \node[font=\bfseries] at (1.85,4.05) {(b) Project and pair};
  \fill[black!1] (0,0) rectangle (3.7,3.3);
  \draw[black,line width=0.8pt] (0,0) rectangle (3.7,3.3);
  \foreach \p in {(0.35,0.44),(0.72,1.40),(0.68,2.67),(1.23,0.82),
                   (1.50,2.25),(2.00,0.38),(2.25,1.28),(2.55,2.72),
                   (3.10,0.72),(3.34,1.82)}
    \node[sample] at \p {};
  \coordinate (p1) at (1.50,2.25);
  \coordinate (p2) at (1.82,2.45);
  \coordinate (p3) at (2.25,1.28);
  \coordinate (p4) at (2.52,1.47);
  \node[pair] at (p1) {};
  \node[pair] at (p2) {};
  \node[pair] at (p3) {};
  \node[pair] at (p4) {};
  \draw[blue!70!black,line width=1pt] (p1)--(p2);
  \draw[blue!70!black,line width=1pt] (p3)--(p4);
  \node[box,text width=3.25cm] at (1.85,-0.65)
    {$R_Q=I-\Phi_Q(\Phi_Q^\top\Phi_Q)^{-1}\Phi_Q^\top$\\
     removes the degree-$\ell$ trend};
  \node[align=center,blue!70!black,fill=white,inner sep=1pt,
        font=\footnotesize] at (2.20,2.88)
    {retain every pair with\\[-1pt]$\|U_i-U_j\|_\infty\leq h_n$};
\end{scope}

\draw[->,line width=0.8pt] (8.55,1.65)--(9.70,1.65);

\begin{scope}[xshift=10cm]
  \node[font=\bfseries] at (2.55,4.05) {(c) Extrapolate to zero distance};
  \draw[->] (0.25,0.25)--(5.15,0.25) node[right] {$s_e$};
  \draw[->] (0.25,0.25)--(0.25,3.45) node[left] {$Z_e$};
  \draw[blue!70!black,line width=1.15pt] (0.25,0.92)--(4.75,2.98);
  \draw[blue!70!black,densely dashed,line width=0.75pt] (0.25,0.92)--(0.25,0.25);
  \foreach \p in {(0.72,1.05),(1.18,1.54),(1.62,1.36),(2.12,2.03),
                   (2.62,1.86),(3.05,2.48),(3.58,2.33),(4.12,2.82),
                   (4.55,2.70)}
    \node[circle,fill=orange!85!black,inner sep=1.55pt] at \p {};
  \node[circle,draw=blue!70!black,fill=white,minimum size=5pt,inner sep=0pt]
    at (0.25,0.92) {};
  \draw[->,blue!70!black,line width=0.65pt] (1.28,0.64)--(0.34,0.88);
  \node[anchor=west,blue!70!black,fill=white,inner sep=1pt,
        font=\footnotesize] at (1.28,0.64)
    {$\widehat a-\bar s\widehat b$ at $s=0$};
  \node[box,text width=4.35cm] at (2.75,-0.65)
    {ridge fit $Z_e\approx a+b(s_e-\bar s)$;\\
     clip the intercept to $[\underline v,\overline v]$};
\end{scope}
\end{tikzpicture}
\caption{Geometry of the numerically stable extension, shown schematically in two dimensions. The sample is partitioned into $J_n\asymp n$ cells. In each cell passing the occupancy and Gram screens, $R_Q$ removes the local degree-$\ell$ polynomial component. Every pair passing the distance and residual-variance screens contributes $Z_e=(d_e^\top R_QY_Q)^2/v_e$ at $s_e=\|U_i-U_j\|_2^2/h_n^2$. One global ridge line is extrapolated to $s=0$, and its intercept is clipped to $[\underline v,\overline v]$.}
\label{fig:projection-extrapolation}
\end{figure}

\begin{proposition}[Numerically stable extension]\label{prop:stable-est-lr}
Under conditions (i)--(iii), with $\beta_b>1$ and $d>4\beta_b$, there are fixed choices of $N_{\max}$, $\kappa$, and $v_0$ such that, for every fixed $\lambda>0$ and every $h_n\to0$ with $nh_n^d\to\infty$,
\begin{equation}\label{eq:stable-generic-risk}
\sup_{\theta\in\Theta(\beta_b,\beta_g,d)}
\E_\theta
(\widehat\sigma_{n,\mathrm{PE}}^2-\sigma^2)^2
\leq
C\left\{
\frac1{nh_n^d}
+H_n^{4\beta_b}h_n^4
+\exp(-cnh_n^d)
\right\}.
\end{equation}
Consequently, if
\begin{equation}\label{eq:stable-optimal-bandwidth}
h_n\asymp n^{-(d-4\beta_b)/\{d(d+4)\}},
\end{equation}
then
\begin{equation}\label{eq:stable-optimal-rate}
\sup_{\theta\in\Theta(\beta_b,\beta_g,d)}
\E_\theta
(\widehat\sigma_{n,\mathrm{PE}}^2-\sigma^2)^2
\leq Cn^{-4(\beta_b+1)/(d+4)}.
\end{equation}
The same bounds hold for the unclipped estimator $\widetilde\sigma_{n,\mathrm{PE}}^2$.
\end{proposition}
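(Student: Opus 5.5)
The plan is to work conditionally on the design $X$ and split the risk on a good event where the pair collection is large and the weights are stable. The estimator $\widetilde\sigma_{n,\mathrm{PE}}^2=\sum_e w_eZ_e$ is a quadratic form $Y^\top A_XY$ with $A_X=\sum_e w_e R_Qd_ed_e^\top R_Q/v_e$ (block diagonal over cells). Since each $Z_e$ has conditional mean $\sigma^2+b_e^2$ by \eqref{eq:stable-pair-conditional-mean}, and $\sum_e w_e=1$ exactly from \eqref{eq:pe-weights}, the conditional bias is $\sum_e w_e b_e^2$. By \Cref{lem:pe-bias bound} with $t=h_n$ (using $\|U_i-U_j\|_\infty\le h_n$, the Gram screen, and $v_e\ge v_0$), each $b_e^2\le CH_n^{2\beta_b}h_n^2$. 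The bias is therefore at most $\|w\|_1 CH_n^{2\beta_b}h_n^2$, and it suffices to show $\|w\|_1\le C$ on the good event. Since $s_e\in[0,d]$, we have $|\bar s(s_e-\bar s)|\le d^2$, and $S_s+\lambda M_n\ge\lambda M_n$, so $|w_e|\le(1+d^2/\lambda)/M_n$ deterministically. This gives $\|w\|_1\le 1+d^2/\lambda$ and $\max_e|w_e|\le C/M_n$. No exact cancellation of a distance term is needed.

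For the variance, I would write $A_X=\sum_Q A_Q$ with cell blocks $A_Q=\sum_{e\in\mathcal E_n(Q)}w_eR_Qd_ed_e^\top R_Q/v_e$. For a quadratic form in conditionally independent errors with bounded fourth moments, $\Var(Y^\top A_XY\mid X)\le C\{\operatorname{tr}(A_X^2)+f^\top A_X^2f\}$, and the fourth-moment diagonal term is also bounded by $C\sum_r (A_X)_{rr}^2\le C\operatorname{tr}(A_X^2)$. Within a retained cell there are at most $\binom{N_{\max}}{2}$ pairs, each term has operator norm at most $2|w_e|/v_0$, and $\|f_Q\|_2\le L\sqrt{N_{\max}}$. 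Hence $\|A_Q\|_F^2\le C\max_e w_e^2$ and $f_Q^\top A_Q^2f_Q\le C\max_e w_e^2$. Summing over the at most $M_n$ cells that contain a retained pair gives variance at most $CM_n\max_e w_e^2\le C/M_n$.

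It remains to show $M_n\ge cnh_n^d$ with probability at least $1-C\exp(-cnh_n^d)$. Every certified cell from the proof of \Cref{lem:pe-availability} has $N_Q=q+2\le N_{\max}$ and passes the Gram screen with $\kappa$ from \eqref{eq:certificate-screen-constants-new}, and its close pair passes \eqref{eq:stable-pair-screen} with $v_0=1$. Thus $M_n\ge S_{h_n}$, and the Poissonization and tilting argument there gives the exponential bound directly. On this event the conditional MSE is at most $C\{H_n^{4\beta_b}h_n^4+(nh_n^d)^{-1}\}$. On the complement, the unclipped estimator requires an unconditional second-moment bound: $\E\{(\widetilde\sigma^2_{n,\mathrm{PE}})^2\mid X\}\le C$ whenever $M_n\ge1$, by the same operator-norm bounds together with $\|w\|_1\le C$ and the fact that each $Z_e$ has bounded conditional second moment. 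The fallback $\underline v$ is bounded, so this term contributes $C\exp(-cnh_n^d)$. Clipping only reduces the error because $\sigma^2\in[\underline v,\overline v]$. Substituting \eqref{eq:stable-optimal-bandwidth} balances the first two terms at $n^{-4(\beta_b+1)/(d+4)}$, and the exponential term is negligible because $nh_n^d$ grows polynomially.

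The main obstacle is the variance bound, since overlapping pairs inside a cell are dependent. The plan handles this by working with the blockwise quadratic form rather than treating the $Z_e$ as independent, so the argument relies on $N_{\max}$ being fixed. Uniform stability of the ridge weights, needed to prevent $\max_e|w_e|\gg 1/M_n$, is secured by the $\lambda M_n$ term.
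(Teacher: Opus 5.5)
Your proposal is correct and follows essentially the same route as the paper. The shared steps are deterministic stability of the ridge weights ($|w_e|\le(1+d^2/\lambda)/M_n$), the projected close-pair bias bound applied with $t=h_n$, a blockwise Frobenius-norm bound on the quadratic form that uses the fixed $N_{\max}$ to give conditional variance $C/M_n$, and $M_n\ge|\mathcal C_n(h_n)|$ combined with the Poissonized availability lemma. The only cosmetic difference is on the bad event: you bound its contribution by a direct conditional second-moment bound via $\|w\|_1\le C$, whereas the paper reuses its conditional bias--variance lemma with $M_n\ge1$.
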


\subsection{Risk analysis}

\begin{lemma}[Stable ridge-intercept weights]\label{lem:pe-weights-stability}
Whenever $M_n\geq1$, with $C_w=1+d^2/\lambda$,
\begin{equation}\label{eq:stable-weight-bounds}
\sum_{e\in\mathcal E_n}w_e=1,
\qquad
\max_{e\in\mathcal E_n}|w_e|\leq\frac{C_w}{M_n},
\qquad
\sum_{e\in\mathcal E_n}|w_e|\leq C_w.
\end{equation}
\end{lemma}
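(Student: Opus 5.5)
The plan is to verify the three claims in \eqref{eq:stable-weight-bounds} directly from the closed form \eqref{eq:pe-weights}. The only geometric input is that every retained pair satisfies the distance screen in \eqref{eq:stable-pair-screen}. No probabilistic argument is needed: the bounds are deterministic on the event $M_n\geq1$.

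\emph{Step 1 (normalization).} Summing \eqref{eq:pe-weights} over $e\in\mathcal E_n$ gives
\[
\sum_e w_e=1-\frac{\bar s}{S_s+\lambda M_n}\sum_e(s_e-\bar s)=1,
\]
because the centered distances sum to zero. The denominator is strictly positive since $\lambda>0$ and $M_n\geq1$, so the weights are always well defined, even when $S_s=0$ (for instance when $M_n=1$).

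\emph{Step 2 (boundedness of the regressor).} For a retained pair, $\|U_i-U_j\|_\infty\leq h_n$ implies $\|U_i-U_j\|_2^2\leq d\,h_n^2$. Hence $0\leq s_e\leq d$ for every $e$. It follows that $0\leq\bar s\leq d$ and $|s_e-\bar s|\leq d$.

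\emph{Step 3 (sup and $\ell_1$ bounds).} By the triangle inequality and $S_s\geq0$,
\[
|w_e|\leq\frac1{M_n}+\frac{\bar s\,|s_e-\bar s|}{S_s+\lambda M_n}\leq\frac1{M_n}+\frac{d^2}{\lambda M_n}=\frac{C_w}{M_n}.
\]
Summing this bound over the $M_n$ retained pairs then gives $\sum_e|w_e|\leq C_w$.

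There is no genuine obstacle here. The only point needing care is to bound the slope term using $\lambda M_n$ in the denominator rather than $S_s$, since $S_s$ can be arbitrarily small when the retained distances are nearly constant; the fixed ridge $\lambda>0$ is exactly what makes this uniform. (One could sharpen the $\ell_1$ bound via Cauchy--Schwarz, $\sum_e|s_e-\bar s|\leq\sqrt{M_nS_s}$, but the crude bound already yields the stated constant.)
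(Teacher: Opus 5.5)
Your proposal is correct and follows essentially the same route as the paper's proof: normalization from $\sum_e(s_e-\bar s)=0$, the bound $0\le s_e\le d$ from the $\ell_\infty$ distance screen, and the lower bound $S_s+\lambda M_n\ge\lambda M_n$ on the denominator to get $|w_e|\le C_w/M_n$, then summing over the $M_n$ retained pairs.
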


\begin{proof}
The first identity follows from $\sum_e(s_e-\bar s)=0$. The pair screen implies $0\leq s_e\leq d$, so $0\leq\bar s\leq d$ and $|s_e-\bar s|\leq d$. Since $S_s+\lambda M_n\geq\lambda M_n$, \eqref{eq:pe-weights} gives
\begin{equation*}
|w_e|
\leq
\frac1{M_n}+\frac{d^2}{\lambda M_n}
=\frac{C_w}{M_n}.
\end{equation*}
Summing proves the final bound.
\end{proof}

\begin{lemma}[Conditional risk]\label{lem:pe-conditional risk-new}
There is $C<\infty$ such that, on every realized design with $M_n\geq1$,
\begin{equation}\label{eq:stable-conditional risk}
\left|
\E(\widetilde\sigma_{n,\mathrm{PE}}^2\mid X)-\sigma^2
\right|
\leq CH_n^{2\beta_b}h_n^2,
\qquad
\Var(\widetilde\sigma_{n,\mathrm{PE}}^2\mid X)
\leq\frac{C}{M_n}.
\end{equation}
\end{lemma}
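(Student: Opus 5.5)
The proof conditions on the design $X$ throughout, so that $\mathcal E_n$, the weights $w_e$, the projections $R_Q$ and the values $v_e$ are all fixed. It then handles the bias and the variance separately.

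\emph{Bias.} By \eqref{eq:stable-pair-conditional-mean} and $\sum_e w_e=1$ from \Cref{lem:pe-weights-stability},
\begin{equation*}
\E(\widetilde\sigma_{n,\mathrm{PE}}^2\mid X)-\sigma^2=\sum_{e\in\mathcal E_n}w_e\,\frac{(d_e^\top R_Qf_Q)^2}{v_e}.
\end{equation*}
Each retained pair lies in a cell with $\lambda_{\min}(G_Q/N_Q)\geq\kappa$, and it satisfies $\|U_i-U_j\|_\infty\leq h_n$ and $v_e\geq v_0$. Hence \Cref{lem:pe-bias bound} with $t=h_n$ bounds each summand by $CH_n^{2\beta_b}h_n^2$.

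That lemma was stated for general $N_Q$ with the Gram screen. The constant in its proof uses $\|\Phi_Q^\top\mathbf r_Q\|_2\leq CN_QH_n^{\beta_b}$ and $\|G_Q^{-1}\|_{\op}\leq(\kappa N_Q)^{-1}$. The factors of $N_Q$ cancel, so the bound is uniform in the occupancy. Combining with $\sum_e|w_e|\leq C_w$ gives the bias bound.

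\emph{Variance.} Write $\widetilde\sigma^2_{n,\mathrm{PE}}-\E(\cdot\mid X)=\sum_e w_e(Z_e-\E Z_e)$. Since errors in different cells are conditionally independent, covariances vanish across cells. The conditional variance is therefore $\sum_Q\Var(\sum_{e\in\mathcal E_n(Q)}w_eZ_e\mid X)$.

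Within a cell, Cauchy--Schwarz together with a uniform fourth-moment bound gives
\begin{equation*}
\Var\Bigl(\sum_{e\in\mathcal E_n(Q)}w_eZ_e\Bigm| X\Bigr)\leq\Bigl(\sum_{e\in\mathcal E_n(Q)}|w_e|\Bigr)^2\max_e\Var(Z_e\mid X).
\end{equation*}
The fourth-moment bound is $\E(Z_e^2\mid X)\leq C$, which holds because $R_Qd_e/\sqrt{v_e}$ is a unit vector and $|d_e^\top R_Qf_Q|/\sqrt{v_e}\leq 2L\sqrt{N_{\max}}/\sqrt{v_0}$, exactly as in \Cref{lem:cell-average-risk}.

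With $\max_e|w_e|\leq C_w/M_n$ and $|\mathcal E_n(Q)|\leq\binom{N_{\max}}{2}$, each cell contributes at most $C|\mathcal E_n(Q)|^2/M_n^2\leq C'|\mathcal E_n(Q)|/M_n^2$. Summing over cells gives $C/M_n$.

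The main obstacle is the variance step. Overlapping pairs within a cell are correlated, and a naive bound treating all $M_n$ pairs as dependent would lose the $1/M_n$ rate. The cap $N_{\max}$ on occupancy is exactly what localizes this dependence to blocks of bounded size, and the bound fails without it. A second point to check is that the bias lemma's constant does not deteriorate with $N_Q$ in the range $q<N_Q\leq N_{\max}$; this is handled by the $N_Q$-cancellation noted above.
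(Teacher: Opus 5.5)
Your proposal is correct and follows the paper's proof. The bias step is identical. For the variance you use the same ingredients: conditional independence across cells, at most $\binom{N_{\max}}{2}$ retained pairs per cell, and $\max_e|w_e|\le C_w/M_n$. You only package the per-cell bound differently, via Minkowski's inequality and the fourth-moment bound from \Cref{lem:cell-average-risk}, where the paper writes the estimator as $Y^\top A_XY$ and bounds $\|A_X\|_F$ and $\|A_Xf\|_2$. Your explicit check that the constant in \Cref{lem:pe-bias bound} is uniform in $N_Q$ makes precise a point the paper uses only implicitly.
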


\begin{proof}
For every retained pair, define the local unit contrast $u_{e,Q}=R_Qd_e/\sqrt{v_e}$, and let $u_e\in\R^n$ be its zero extension outside $Q$. Write $f=(f(X_1),\ldots,f(X_n))^\top$ and set
\begin{equation}\label{eq:pe-quadratic-form}
A_X=\sum_{e\in\mathcal E_n}w_eu_eu_e^\top.
\end{equation}
Then $\widetilde\sigma_{n,\mathrm{PE}}^2=Y^\top A_XY$. Since $\|u_e\|_2=1$, \Cref{lem:pe-weights-stability} gives
$\operatorname{tr}(A_X)=\sum_ew_e=1$. Let $\mu_e=u_e^\top f$. By \Cref{lem:pe-bias bound}, $|\mu_e|\leq CH_n^{\beta_b}h_n$. Therefore,
\begin{equation*}
\left|
\E(Y^\top A_XY\mid X)-\sigma^2
\right|
=
|f^\top A_Xf|
=
\left|\sum_ew_e\mu_e^2\right|
\leq CH_n^{2\beta_b}h_n^2,
\end{equation*}
where we used $\sum_e|w_e|\leq C_w$.

A retained cell contains at most $K=\binom{N_{\max}}2$ retained pairs. Let $A_{Q,X}$ be the block of $A_X$ corresponding to cell $Q$. Since $\|u_eu_e^\top\|_F=1$,
\begin{equation*}
\|A_{Q,X}\|_F
\leq
\sum_{e\in\mathcal E_n(Q)}|w_e|
\leq\frac{KC_w}{M_n}.
\end{equation*}
At most $M_n$ cell blocks contain a retained pair, and the blocks have disjoint supports. Hence
\begin{equation}\label{eq:stable-frobenius-bound}
\|A_X\|_F^2
=\sum_Q\|A_{Q,X}\|_F^2
\leq\frac{K^2C_w^2}{M_n}.
\end{equation}
Similarly,
$
A_{Q,X}f_Q
=
\sum_{e\in\mathcal E_n(Q)}w_e\mu_eu_{e,Q},
$
so
\begin{equation}\label{eq:stable-Af-bound}
\|A_Xf\|_2^2
\leq
\frac{CH_n^{2\beta_b}h_n^2}{M_n}
\leq\frac{C}{M_n}.
\end{equation}

Write $Y=f+\varepsilon$. Conditional independence, centering, the common conditional variance, and the fourth-moment bound imply
\begin{align*}
\Var(\varepsilon^\top A_X\varepsilon\mid X)
&=
\sum_i(A_X)_{ii}^2\Var(\varepsilon_i^2\mid X)
+4\sigma^4\sum_{i<j}(A_X)_{ij}^2
\leq C\|A_X\|_F^2.
\end{align*}
Also,
$\Var(f^\top A_X\varepsilon\mid X)=\sigma^2\|A_Xf\|_2^2$. Finally,
\begin{equation*}
Y^\top A_XY-\E(Y^\top A_XY\mid X)
=
2f^\top A_X\varepsilon
+
\{\varepsilon^\top A_X\varepsilon-\sigma^2\operatorname{tr}(A_X)\}.
\end{equation*}
Using $\Var(U+V\mid X)\leq2\Var(U\mid X)+2\Var(V\mid X)$ together with \eqref{eq:stable-frobenius-bound}--\eqref{eq:stable-Af-bound} proves the variance bound in \eqref{eq:stable-conditional risk}.
\end{proof}

\begin{proof}[Proof of \Cref{prop:stable-est-lr}]
Choose any fixed $N_{\max}\geq q+2$ and choose $\kappa$ and $v_0$ as in \eqref{eq:certificate-screen-constants-new}. Every cell in $\mathcal C_n(h_n)$ is then retained by the extension and contributes at least one eligible pair, so $M_n\geq|\mathcal C_n(h_n)|$.
Apply \Cref{lem:pe-availability} with $t_n=h_n$, and let $\mathcal H_n=\{M_n\geq cnh_n^d\}$. On $\mathcal H_n$, \Cref{lem:pe-conditional risk-new} gives
\begin{equation*}
\E\left[
(\widetilde\sigma_{n,\mathrm{PE}}^2-\sigma^2)^2
\mathrel{\Big|}X
\right]
\leq
\frac{C}{nh_n^d}
+CH_n^{4\beta_b}h_n^4.
\end{equation*}
On $\mathcal H_n^c\cap\{M_n\geq1\}$, \Cref{lem:pe-conditional risk-new} gives a uniform constant bound on the conditional mean squared error because $M_n\geq1$ and $H_n,h_n\leq1$. On $\{M_n=0\}$, the deterministic fallback also has bounded squared error. Integrating over the design and applying \Cref{lem:pe-availability} proves \eqref{eq:stable-generic-risk} for the unclipped estimator. Clipping cannot increase squared error because $\sigma^2\in[\underline v,\overline v]$, so the bound also holds for the clipped estimator.

Now take $h_n\asymp n^{-a_\star}$ with $a_\star=(d-4\beta_b)/\{d(d+4)\}$. Since $H_n\asymp n^{-1/d}$,
$
(nh_n^d)^{-1}
\asymp
n^{-4(\beta_b+1)/(d+4)},
H_n^{4\beta_b}h_n^4
\asymp
n^{-4(\beta_b+1)/(d+4)}.
$
Also $nh_n^d\asymp n^{4(\beta_b+1)/(d+4)}$, so the exponential term is negligible. This proves \eqref{eq:stable-optimal-rate}.
\end{proof}

\subsection{The close-pair estimator}

\label{cp}

This section 
studies the performance of the close pair construction.
Use the same bounded-aspect equal-volume partition, with
$J_n\asymp n/\lambda_0$ and $H_n=J_n^{-1/d}$.  In each cell, order the observations
by sample label and form the disjoint consecutive blocks
\begin{equation*}
(i_{Q,1},i_{Q,2}),
(i_{Q,3},i_{Q,4}),\ldots.
\end{equation*}
For a deterministic normalized bandwidth $\delta_n\in(0,1]$, retain a block
$e=(i,j)$ when $\|U_i-U_j\|_\infty\leq\delta_n$.  Let
$\mathcal A_n(\delta_n)$ be the retained collection and
$M_n(\delta_n)=|\mathcal A_n(\delta_n)|$.  When $M_n(\delta_n)>0$, set
\begin{equation}
\label{eq:direct-pair-estimator-definition}
\widetilde\sigma_{n,\mathrm{pair}}^2(\delta_n)
=
\frac{1}{M_n(\delta_n)}
\sum_{(i,j)\in\mathcal A_n(\delta_n)}
\frac{(Y_i-Y_j)^2}{2}.
\end{equation}
Use any deterministic fallback in $[\underline v,\overline v]$ when
$M_n(\delta_n)=0$, and clip to $[\underline v,\overline v]$ to obtain
$\widehat\sigma_{n,\mathrm{pair}}^2(\delta_n)$.

\begin{proposition}[Direct close-pair benchmark: sharp saturation rate]
\label{prop:raw-close-pair}
Assume $\beta_b>1$, $\beta_g>0$, $d>4$, and $\underline v<\overline v$. There are fixed constants
$\delta_0>0$ and $m_0<\infty$ such that, for all sufficiently large $n$ and
every deterministic $\delta_n\in(0,\delta_0]$ satisfying
$n\delta_n^d\geq m_0$,
\begin{equation}
\label{eq:direct-pair-generic-sharp-rate}
c\left\{
\frac{1}{n\delta_n^d}+H_n^4\delta_n^4
\right\}
\leq
\sup_{\theta\in\Theta(\beta_b,\beta_g,d)}
\E_\theta\left[\{\widehat\sigma_{n,\mathrm{pair}}^2(\delta_n)-\sigma^2\}^2
\right]
\leq
C\left\{
\frac{1}{n\delta_n^d}+H_n^4\delta_n^4
\right\}.
\end{equation}
Let
$\mathfrak B_n=\{\delta\in(0,\delta_0]:n\delta^d\geq m_0\}$.  Then
\begin{equation}
\label{eq:direct-pair-minimax-family}
\inf_{\delta\in\mathfrak B_n}
\sup_{\theta\in\Theta(\beta_b,\beta_g,d)}
\E_\theta
\left[
\{\widehat\sigma_{n,\mathrm{pair}}^2(\delta)-\sigma^2\}^2
\right]
\asymp n^{-8/(d+4)}.
\end{equation}
The minimizing order is
\begin{equation}
\label{eq:direct-pair-optimal-bandwidth}
\delta_n^{\mathrm{pair}}
\asymp
n^{-(d-4)/\{d(d+4)\}}.
\end{equation}
Thus the method saturates at smoothness one: its optimized rate does not improve
when $\beta_b>1$ increases.
\end{proposition}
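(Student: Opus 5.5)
The plan is to prove the two-sided bound \eqref{eq:direct-pair-generic-sharp-rate} first; \eqref{eq:direct-pair-minimax-family} and \eqref{eq:direct-pair-optimal-bandwidth} then follow from the elementary minimization of $(n\delta^d)^{-1}+H_n^4\delta^4$ over $\delta$. With $H_n\asymp n^{-1/d}$, this minimization gives $\delta\asymp n^{-(d-4)/\{d(d+4)\}}$ and value $n^{-8/(d+4)}$. The hypothesis $d>4$ ensures that this $\delta$ tends to zero while $n\delta^d\to\infty$, so it lies in $\mathfrak B_n$.

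\emph{Upper bound.} Conditional on $X$, every retained block is a unit contrast $(e_i-e_j)/\sqrt2$. Its bias term is $(f(X_i)-f(X_j))^2/2$. Since $f$ is $L$-Lipschitz and $\|X_i-X_j\|\le C H_n\delta_n$, this is at most $CH_n^2\delta_n^2$. The blocks are disjoint, so \Cref{lem:cell-average-risk} applies verbatim (its proof only uses disjoint supports of bounded size). It gives conditional risk at most $CH_n^4\delta_n^4+C/m_n$ on the event $\{M_n(\delta_n)\ge m_n\}$. For availability, I would mimic \Cref{lem:pe-availability-chebyshev}. Let $\xi_Q$ be the indicator that cell $Q$ contains exactly two points, both lying in an interior box $B_0$ within normalized distance $\delta_n/2$ of each other. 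The same multinomial computation gives $\E\xi_Q\asymp\delta_n^d$ and covariances of relative order $1/n$. Chebyshev's inequality then yields $\Pp\{M_n<cn\delta_n^d\}\le C/(n\delta_n^d)$. The fallback is bounded, and clipping cannot increase the error. This gives the upper bound, uniformly in $\theta$, and uses only the density bounds.

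\emph{Lower bound.} It suffices to exhibit, for each term separately, a parameter attaining it. For the variance term, take $f\equiv0$, $p\equiv1$ (allowed since $L\ge1$), and errors $\varepsilon=\sigma\xi$ with $\xi$ a fixed non-Gaussian or Gaussian law satisfying the fourth-moment bound. Choose $\sigma^2$ strictly inside $(\underline v,\overline v)$, which uses $\underline v<\overline v$. Conditionally on $X$, the estimator is an average of $M_n$ i.i.d.\ variables with variance $c>0$, so the unclipped risk is at least $c\,\E[1/M_n]\ge c/\E M_n\ge c/(n\delta_n^d)$ by Jensen's inequality, using $\E M_n\le Cn\delta_n^d$. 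For this $f$ the fallback case and clipping must be handled. I would show that clipping changes little because, with $\sigma^2$ at distance $\gtrsim1$ from the endpoints and $M_n\ge cn\delta_n^d\ge c m_0$ with high probability, the estimator lies in the interval except on an event whose contribution is of smaller order. Here $m_0$ is chosen large so that $\Pp\{M_n=0\}$ and the outside-interval probability are small. An alternative is to restrict to the event $\{M_n\ge cn\delta^d\}$ and use a fourth-moment Paley–Zygmund argument to keep a constant fraction of the variance after clipping.

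\emph{Bias term, the main obstacle.} Take the affine $f(x)=a\,x_1$ with small fixed $a$, so that $f\in\cH^{\beta_b}(L;U)$ for every $\beta_b$, together with $p\equiv1$ and Gaussian errors. Then each retained block has bias $a^2(X_{i,1}-X_{j,1})^2/2\ge0$, with no cancellation. Conditional on the close-pair event, the normalized difference is roughly uniform on $[-\delta_n,\delta_n]^d$. Hence the average bias is $\gtrsim a^2H_n^2\delta_n^2$ on an event of probability bounded below, and the squared bias is $\gtrsim H_n^4\delta_n^4$. This holds uniformly because bounded aspect ratio gives first side length $\ge\kappa_HH_n$, and $\delta_n\le\delta_0$ keeps the pair in the interior. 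The delicate point is again clipping. The bias $a^2H_n^2\delta_n^2\to0$ is much smaller than the distance from $\sigma^2$ to the endpoints, so on the high-probability event where the estimator is within $o(1)$ of its conditional mean, clipping is inactive. I would therefore prove a lower bound on $\E[(\widetilde\sigma^2-\sigma^2)^2\ind\{\widetilde\sigma^2\in[\underline v,\overline v]\}]$ by Chebyshev's inequality applied with the conditional variance $C/M_n$. Finally, combining the two constructions gives a lower bound by the maximum of the two terms, which is at least half their sum.
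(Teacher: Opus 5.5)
Your plan is correct and follows the architecture of the paper's proof. The upper bound comes from a conditional bias--variance bound on disjoint pairs plus an availability bound. The lower bound works under the uniform design, with $f\equiv0$ for the variance term and an affine function of $x_1$ for the bias term; clipping is harmless because $\sigma^2$ lies strictly inside $[\underline v,\overline v]$. Balancing then gives the rate.

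Several sub-steps differ from the paper's:
\begin{itemize}
\item \emph{Availability.} The paper reruns the tilted Poissonization/de-Poissonization argument to get failure probability $C\exp(-cn\delta_n^d)$. Your second-moment bound $C/(n\delta_n^d)$, as in \Cref{lem:pe-availability-chebyshev}, suffices because the fallback has bounded risk, so this is a real simplification.
\item \emph{Variance term.} The paper builds an event with $c_1\mu\le M_n\le C_1\mu$ (Paley--Zygmund plus Markov) and applies a Hoeffding-based clipped-average lemma to a bounded error law. Your Jensen step, restricted to a good event $G$ via $\E[\ind_G/M_n]\ge\Pp(G)^2/\E M_n$ with $\E M_n\le Cn\delta_n^d$, avoids the two-sided count.
\item \emph{Directional spread.} The paper forces $|U_{i1}-U_{j1}|\ge\delta/2$ through an explicit set $\mathcal V_\delta$ in two-point cells. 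You use all retained pairs instead. This is valid once you condition on cell memberships and on the retention indicators: the retained normalized differences are then independent, with density proportional to $\prod_k(1-|w_k|)$ on $[-\delta_n,\delta_n]^d$. Hence their average squared first coordinate is at least $c\delta_n^2$ with high probability.
\end{itemize}

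Three details need repair.
\begin{itemize}
\item \emph{Error law.} In the bias construction, Gaussian errors need not belong to the model. Only $M_4>\overline v^{\,2}$ is assumed, whereas a Gaussian with variance $v_\circ$ has fourth moment $3v_\circ^2$. Use a bounded law instead, such as a scaled Rademacher or the paper's three-point law.
\item \emph{Extension of $f$.} When $U$ is unbounded, $a x_1$ is not in $\cH^{\beta_b}(L;U)$. Multiply it by a smooth cutoff equal to one near $[0,1]^d$.
\item \emph{Case split.} Chebyshev with only the upper bound $\Var(\widetilde\sigma^2_{n,\mathrm{pair}}\mid X)\le C/M_n$ keeps the estimator within $b_X/2$ of its conditional mean only when $1/(n\delta_n^d)\le c_\star H_n^4\delta_n^4$. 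In the complementary regime your Chebyshev argument for the affine construction gives nothing on its own, and the variance construction must supply the maximum. Stating this case split explicitly, as the paper does, is what justifies combining the two constructions into $\max\{(n\delta_n^d)^{-1},H_n^4\delta_n^4\}$.
\end{itemize}
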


The proof uses the following fixed-sample geometric fact.  Its directional
component is needed only for the lower bound.

\begin{lemma}[Count and directional spread of direct pairs]
\label{lem:direct-pair-geometry}
Under the uniform design, there are constants
$0<c_1<C_1<\infty$, $c_2>0$, and $\delta_0>0$ such that, whenever
$0<\delta\leq\delta_0$ and $\mu=n\delta^d\geq1$, an event $\mathcal E_n(\delta)$
with $\Pp\{\mathcal E_n(\delta)\}\geq c_2$ exists on which
\begin{equation}
\label{eq:direct-pair-count-spread-event}
c_1\mu\leq M_n(\delta)\leq C_1\mu
\end{equation}
and
\begin{equation}
\label{eq:direct-pair-directional-spread}
\frac{1}{M_n(\delta)}
\sum_{(i,j)\in\mathcal A_n(\delta)}
(X_{i1}-X_{j1})^2
\geq c_1H_n^2\delta^2.
\end{equation}
\end{lemma}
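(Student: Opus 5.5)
We prove \Cref{lem:direct-pair-geometry} by a second-moment argument over cells, in the spirit of \Cref{lem:pe-availability-chebyshev}, but applied to the consecutive blocks of the direct-pair construction under the uniform design. Under the uniform design each cell has $\pi_Q=J_n^{-1}\asymp\lambda_0/n$, and the normalized coordinates of points in a cell are iid uniform on $[0,1]^d$ given the cell count.

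\textbf{Step 1: cell-level indicators.} For each cell $Q$, let $\xi_Q$ indicate that $N_Q=2$, that the two points (which form the unique block $(i_{Q,1},i_{Q,2})$) satisfy $\|U_i-U_j\|_\infty\leq\delta$, and that additionally $|U_{i1}-U_{j1}|\geq\delta/2$. Given $N_Q=2$, the difference $U_i-U_j$ has a density bounded above and below near the origin. Hence, for $\delta\leq\delta_0$, the conditional probability of this event is between $c\delta^d$ and $C\delta^d$. Since $\Pp(N_Q=2)\asymp1$ for fixed $\lambda_0$, we get $\E\xi_Q\asymp\delta^d$. The multinomial covariance computation of \Cref{lem:pe-availability-chebyshev} (with $k=2$) gives $\operatorname{Cov}(\xi_Q,\xi_R)\leq (C/n)\E\xi_Q\E\xi_R$. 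Consequently $S=\sum_Q\xi_Q$ has mean $\asymp\mu$ and variance $\leq C\mu$. Chebyshev then yields $S\geq c\mu$ with probability at least $1-C/\mu$.

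\textbf{Step 2: upper count.} Every retained block comes from some cell. Let $T$ be the total number of retained blocks, so $T=M_n(\delta)$. A block is formed by two points with $\|U_i-U_j\|_\infty\leq\delta$, so $\E M_n\leq\sum_Q\E[\binom{N_Q}{2}]\cdot C\delta^d\leq C\mu$, using $\E N_Q^2\leq C$. Markov's inequality gives $M_n\leq C_1\mu$ with probability at least $1-1/2$ for $C_1$ large.

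\textbf{Step 3: the event.} When $\mu$ is large, intersect the two events; the intersection has probability at least $1/2-C/\mu\geq c_2$. For $1\leq\mu\leq m$ with $m$ bounded, Chebyshev is too weak, and this is the main obstacle. In that range we instead use $\Pp(S\geq1)\geq c$ directly: by the second-moment (Paley–Zygmund) inequality, $\Pp(S>0)\geq(\E S)^2/\E S^2\geq c\mu^2/(C\mu+C\mu^2)\geq c'$. Then $S\geq1\geq\mu/m$ gives the lower count up to constants. The Markov bound is then taken at level $1-c'/2$ by enlarging $C_1$, so the intersection still has probability at least $c'/2$.

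\textbf{Step 4: directional spread.} On the event, each of the $S\geq c\mu$ blocks counted by $\xi_Q$ contributes $(X_{i1}-X_{j1})^2\geq\kappa_H^2H_n^2\delta^2/4$, since cell side lengths are at least $\kappa_HH_n$. All other terms are nonnegative. Dividing by $M_n\leq C_1\mu$ gives \eqref{eq:direct-pair-directional-spread} with $c_1$ reduced appropriately.
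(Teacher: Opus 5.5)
Your proposal is correct and follows essentially the same route as the paper. Both arguments take cells with exactly two points whose pair lies in the directional close set $\{\|u-v\|_\infty\le\delta,\ |u_1-v_1|\ge\delta/2\}$ and get the lower count from a multinomial second-moment bound, get the upper count from Markov's inequality, and get the directional spread from the counted cells. The only difference is cosmetic: the paper applies Paley--Zygmund at threshold $\tfrac12\E S$ with $\E S^2\le C(\mu+\mu^2)\le 2C\mu^2$, which covers every $\mu\ge1$ at once, so your Chebyshev/bounded-$\mu$ case split is unnecessary.
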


\begin{proof}
For $0<\delta\leq\delta_0$, choose a symmetric measurable set
$\mathcal V_\delta\subset[0,1]^d\times[0,1]^d$ with the following properties:
\begin{equation*}
|\mathcal V_\delta|\geq c\delta^d,
\qquad
\|u-v\|_\infty\leq\delta,
\qquad
|u_1-v_1|\geq\delta/2
\end{equation*}
for every $(u,v)\in\mathcal V_\delta$.  For example, one may take $u$ in a
fixed cube strictly inside $(0,1)^d$, take
$u_1-v_1\in[\delta/2,3\delta/4]$, take
$|u_j-v_j|\leq\delta/4$ for $j\geq2$, and then symmetrize.

Let $L_n(\delta)$ be the number of cells containing exactly two observations
whose normalized coordinates belong to $\mathcal V_\delta$.  Such a cell
contributes one retained consecutive pair.  For a fixed cell $Q$ under the
uniform design,
\begin{equation*}
\Pp(Q\text{ contributes to }L_n)
=
\binom n2J_n^{-2}(1-J_n^{-1})^{n-2}|\mathcal V_\delta|
\asymp\delta^d.
\end{equation*}
For two distinct cells $Q,Q'$, the joint contribution probability is at most a
constant multiple of $\delta^{2d}$, because it equals
$
\frac{n!}{2!2!(n-4)!}
J_n^{-4}(1-2J_n^{-1})^{n-4}|\mathcal V_\delta|^2.
$
Since $J_n\asymp n$, it follows that
\begin{equation*}
\E L_n(\delta)\geq c\mu,
\qquad
\E L_n(\delta)^2\leq C(\mu+\mu^2).
\end{equation*}
Paley--Zygmund therefore gives
$
\Pp\{L_n(\delta)\geq c\mu\}\geq c'>0.
$

Conditional on all cell counts, each candidate consecutive pair consists of two
independent uniform normalized points.  Its acceptance probability is at most
$C\delta^d$, and there are at most $n/2$ candidate pairs.  Hence
$\E M_n(\delta)\leq C\mu$.  Markov's inequality, with a sufficiently large
constant, shows that $M_n(\delta)\leq C_1\mu$ except on an event whose
probability is smaller than $c'/2$.  Intersecting these events gives an event of
probability at least $c_2>0$ on which
$c_1\mu\leq L_n(\delta)\leq M_n(\delta)\leq C_1\mu$.

Every pair counted by $L_n(\delta)$ satisfies
$|U_{i1}-U_{j1}|\geq\delta/2$.  Since the first side length of every cell is at
least a fixed multiple of $H_n$,
\begin{equation*}
\sum_{(i,j)\in\mathcal A_n(\delta)}
(X_{i1}-X_{j1})^2
\geq cL_n(\delta)H_n^2\delta^2.
\end{equation*}
Dividing by $M_n(\delta)\leq C_1\mu$ proves
\eqref{eq:direct-pair-directional-spread}.
\end{proof}

We also use a simple fact about clipping an average whose target lies strictly
inside the clipping interval.

\begin{lemma}[Clipped averages retain a variance-order risk lower bound]
\label{lem:clipped-average-variance-lower}
Let $W_1,W_2,\ldots$ be iid bounded random variables with
$\E W_1=v\in(\underline v,\overline v)$ and
$\Var(W_1)>0$.  Then, for all sufficiently large $m$,
\begin{equation}
\label{eq:clipped-average-variance-lower}
\E\left[
\left\{
\clip_{[\underline v,\overline v]}
\left(\frac1m\sum_{r=1}^mW_r\right)-v
\right\}^2
\right]
\geq \frac{c}{m}.
\end{equation}
\end{lemma}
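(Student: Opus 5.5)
We split the squared error of the clipped average $\bar W_m=m^{-1}\sum_{r=1}^mW_r$ according to whether clipping is active. Write $\tau^2=\Var(W_1)>0$, and let $\gamma=\min\{v-\underline v,\overline v-v\}>0$, which is the distance from the target to the boundary of the clipping interval. On the event $A_m=\{|\bar W_m-v|\leq\gamma\}$ the clip is inactive, so the clipped value equals $\bar W_m$. Since squared error is nonnegative, we get
\begin{equation*}
\E\left[\left\{\clip_{[\underline v,\overline v]}(\bar W_m)-v\right\}^2\right]
\geq
\E\left[(\bar W_m-v)^2\ind_{A_m}\right]
=\frac{\tau^2}{m}-\E\left[(\bar W_m-v)^2\ind_{A_m^c}\right].
\end{equation*}
It therefore suffices to show that the subtracted term is $o(1/m)$.

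For this, let $|W_1|\leq B$, so that $|\bar W_m-v|\leq 2B$ almost surely. Hoeffding's inequality gives $\Pp(A_m^c)\leq2\exp\{-m\gamma^2/(8B^2)\}$. Hence
\begin{equation*}
\E\left[(\bar W_m-v)^2\ind_{A_m^c}\right]\leq 8B^2\exp\{-m\gamma^2/(8B^2)\},
\end{equation*}
which is exponentially small. For all sufficiently large $m$ this is at most $\tau^2/(2m)$. That yields \eqref{eq:clipped-average-variance-lower} with $c=\tau^2/2$.

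As an alternative to Hoeffding, Chebyshev combined with Cauchy--Schwarz would also work: bound the term by $\{\E(\bar W_m-v)^4\}^{1/2}\Pp(A_m^c)^{1/2}\lesssim m^{-1}\cdot m^{-1/2}$ using the fourth central moment of a mean. Either route gives $o(1/m)$.

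There is no real obstacle here. The only point needing care is that $v$ lies strictly inside $(\underline v,\overline v)$, which makes $\gamma>0$ so that clipping can only matter on a vanishing event. When the lemma is used in \Cref{prop:raw-close-pair}, one must also ensure that the constants $\tau^2$, $B$ and $\gamma$ are uniform over the chosen hard parameter. This follows from choosing bounded errors with $\sigma^2$ interior to $[\underline v,\overline v]$, which the assumption $\underline v<\overline v$ permits.
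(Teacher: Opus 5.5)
Your proof is correct and follows the paper's argument essentially verbatim. You restrict to the event $\{|\overline W_m-v|\le\Delta\}$, on which clipping is inactive, write the restricted second moment as $\Var(W_1)/m$ minus the complement's contribution, and make that contribution exponentially small using boundedness and Hoeffding's inequality (your Chebyshev--Cauchy--Schwarz variant also works and needs only a finite fourth moment, but the main route is the paper's).
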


\begin{proof}
Let $\Delta=\min\{v-\underline v,\overline v-v\}>0$ and
$\overline W_m=m^{-1}\sum_{r=1}^mW_r$.  On
$\{|\overline W_m-v|\leq\Delta\}$ clipping does nothing.  Since the $W_r$ are
bounded, Hoeffding's inequality gives
$\Pp(|\overline W_m-v|>\Delta)\leq2e^{-cm}$.  Therefore,
\begin{align*}
\E\left[
\{\clip_{[\underline v,\overline v]}(\overline W_m)-v\}^2
\right]
&\geq
\E\left[
(\overline W_m-v)^2
\ind\{|\overline W_m-v|\leq\Delta\}
\right]
\geq
\frac{\Var(W_1)}{m}-Ce^{-cm}
\geq \frac{c}{m}
\end{align*}
for all sufficiently large $m$.
\end{proof}

\begin{proof}[Proof of \Cref{prop:raw-close-pair}]
We first prove the upper bound.  A Poissonized cell contains exactly two points
whose normalized $\ell_\infty$ distance is at most $\delta_n$ with probability
at least $c\delta_n^d$, uniformly over the model class.  The restrictions to
different cells are independent.  The tilted de-Poissonization argument used in
the proof of \Cref{lem:pe-availability}, with a certificate containing two
points instead of $q+2$, therefore gives
\begin{equation}
\label{eq:direct-pair-availability}
\sup_{\theta\in\Theta(\beta_b,\beta_g,d)}
\Pp_\theta\{M_n(\delta_n)<cn\delta_n^d\}
\leq C\exp(-cn\delta_n^d).
\end{equation}

Conditionally on the covariates, for every retained pair,
\begin{equation*}
\E\left\{\frac{(Y_i-Y_j)^2}{2}\mathrel{\big|}X\right\}
=
\sigma^2+\frac{\{f(X_i)-f(X_j)\}^2}{2}.
\end{equation*}
Because $\beta_b>1$, $\|\nabla f\|_\infty\leq C$, and the physical distance of a
retained pair is at most $CH_n\delta_n$.  Hence the conditional bias of
\eqref{eq:direct-pair-estimator-definition} is at most
$CH_n^2\delta_n^2$.  The retained pairs are disjoint, so their statistics are
conditionally independent.  The uniform fourth-moment bound and boundedness of
$f$ imply a uniform bound on their conditional variances.  Therefore, on the
event $\{M_n(\delta_n)\geq cn\delta_n^d\}$,
\begin{equation*}
\E\left[
\{\widetilde\sigma_{n,\mathrm{pair}}^2(\delta_n)-\sigma^2\}^2
\mathrel{\big|}X
\right]
\leq
C\left\{
\frac{1}{n\delta_n^d}+H_n^4\delta_n^4
\right\}.
\end{equation*}
Clipping cannot increase squared error, and the clipped fallback has bounded
risk.  After increasing $m_0$ if necessary, the exponential term in
\eqref{eq:direct-pair-availability} is smaller than a constant multiple of
$(n\delta_n^d)^{-1}$.  This proves the upper bound in
\eqref{eq:direct-pair-generic-sharp-rate}.

We next prove the lower bound.  Work throughout under the uniform design and
write $\mu_n=n\delta_n^d$.  Fix
$v_\circ\in(\underline v,\overline v)$.  Since $M_4>\overline v^{\,2}$, choose
$A>0$ such that $\overline v<A^2<M_4/\overline v$.  Let the errors be iid,
independent of the design, with the bounded three-point law
\begin{equation*}
Q_{v_\circ}(\{-A\})=Q_{v_\circ}(\{A\})=\frac{v_\circ}{2A^2},
\qquad
Q_{v_\circ}(\{0\})=1-\frac{v_\circ}{A^2}.
\end{equation*}
This law is centered, has variance $v_\circ$ and fourth moment
$A^2v_\circ<A^2\overline v<M_4$, and thus satisfies the model's error conditions.

First take $f=0$.  Conditional on $X$ and on $M_n=m$, the retained-pair
statistics are iid copies of
$
W=\frac{(\varepsilon_1-\varepsilon_2)^2}{2}$.
They are bounded, satisfy $\E W=v_\circ$, and have strictly positive variance;
in fact,
$
\Var(W)=\frac12\{A^2v_\circ+v_\circ^2\}>0$.
On the event $\mathcal E_n(\delta_n)$ from
\Cref{lem:direct-pair-geometry}, one has
$c\mu_n\leq M_n\leq C\mu_n$.  After increasing $m_0$ if necessary,
\Cref{lem:clipped-average-variance-lower} gives
\begin{equation}
\label{eq:direct-pair-variance-lower}
\sup_{\theta\in\Theta(\beta_b,\beta_g,d)}
\E_\theta
\left[
\{\widehat\sigma_{n,\mathrm{pair}}^2(\delta_n)-\sigma^2\}^2
\right]
\geq \frac{c}{\mu_n}.
\end{equation}

To obtain the geometric bias term, take $f(x)=a_0x_1$ on $[0,1]^d$. To satisfy the extension condition even when $U$ is unbounded, choose $\chi\in C_c^\infty(U)$ equal to one on a neighborhood of $[0,1]^d$ and use the extension $x\mapsto a_0x_1\chi(x)$. Its $C^{\beta_b}(U)$ norm is finite and proportional to $a_0$, so a sufficiently small fixed $a_0>0$ places this extension in $\cH^{\beta_b}(L;U)$.  On $\mathcal E_n(\delta_n)$, the conditional bias
$b_X$ of the unclipped estimator satisfies, by
\eqref{eq:direct-pair-directional-spread},
\begin{equation}
\label{eq:direct-pair-affine-bias-bounds}
cH_n^2\delta_n^2
\leq
b_X
=
\frac{a_0^2}{2M_n}
\sum_{(i,j)\in\mathcal A_n(\delta_n)}
(X_{i1}-X_{j1})^2
\leq
CH_n^2\delta_n^2.
\end{equation}
The upper bound uses the acceptance condition and the bounded aspect ratio.
Under the bounded error law, every retained-pair statistic is uniformly
bounded.  Conditional independence of the disjoint pairs therefore gives
\begin{equation}
\label{eq:direct-pair-affine-variance-upper}
\Var\{\widetilde\sigma_{n,\mathrm{pair}}^2(\delta_n)\mid X\}
\leq \frac{C}{M_n}
\leq \frac{C}{\mu_n}
\end{equation}
on $\mathcal E_n(\delta_n)$.

Let $r_{b,n}=H_n^4\delta_n^4$.  If
$\mu_n^{-1}\geq c_\star r_{b,n}$ for a sufficiently small fixed
$c_\star>0$, then \eqref{eq:direct-pair-variance-lower} is already at least a
constant multiple of
$\max\{\mu_n^{-1},r_{b,n}\}$.  Suppose instead that
$\mu_n^{-1}<c_\star r_{b,n}$.  Choose $c_\star$ so small that
\eqref{eq:direct-pair-affine-variance-upper} and Chebyshev's inequality imply,
conditionally on every design in $\mathcal E_n(\delta_n)$,
\begin{equation*}
\Pp\left(
\left|
\widetilde\sigma_{n,\mathrm{pair}}^2(\delta_n)
-\E\{\widetilde\sigma_{n,\mathrm{pair}}^2(\delta_n)\mid X\}
\right|
\leq \frac{c}{2}H_n^2\delta_n^2
\mathrel{\Big|}X
\right)
\geq\frac12,
\end{equation*}
where $c$ is the lower-bias constant in
\eqref{eq:direct-pair-affine-bias-bounds}.  Since
$H_n^2\delta_n^2\to0$ and $v_\circ$ lies strictly inside the clipping interval,
for all sufficiently large $n$ the estimator is not clipped on this event and
its distance from $v_\circ$ is at least
$cH_n^2\delta_n^2/2$.  Thus
\begin{equation*}
\E\left[
\{\widehat\sigma_{n,\mathrm{pair}}^2(\delta_n)-v_\circ\}^2
\mathrel{\Big|}X
\right]
\geq cH_n^4\delta_n^4
\end{equation*}
on $\mathcal E_n(\delta_n)$.  Since this design event has probability bounded
away from zero, the same lower bound holds unconditionally.  Combining the two
cases with \eqref{eq:direct-pair-variance-lower} gives
\begin{equation*}
\sup_{\theta\in\Theta(\beta_b,\beta_g,d)}
\E_\theta
\left[
\{\widehat\sigma_{n,\mathrm{pair}}^2(\delta_n)-\sigma^2\}^2
\right]
\geq
c\max\left\{
\frac{1}{n\delta_n^d},H_n^4\delta_n^4
\right\},
\end{equation*}
which is equivalent, up to a factor of two, to the lower bound in
\eqref{eq:direct-pair-generic-sharp-rate}.

Finally, because $H_n\asymp n^{-1/d}$, balancing the two terms gives
$\frac{1}{n\delta_n^d}
\asymp
H_n^4\delta_n^4
$ if and only if
$\delta_n\asymp n^{-(d-4)/\{d(d+4)\}}$.  At this choice, both terms are of
order $n^{-8/(d+4)}$, proving \eqref{eq:direct-pair-minimax-family} and
\eqref{eq:direct-pair-optimal-bandwidth}.
\end{proof}

\end{document}